\newtheorem{theorem}{Theorem}[section]
\newtheorem{lemma}[theorem]{Lemma}
\newtheorem{prop}[theorem]{Proposition}
\newtheorem{corollary}[theorem]{Corollary}
\newtheorem*{maitheorem}{Main Theorem}
\newtheorem*{cor}{Corollary}
\theoremstyle{definition} 
\newtheorem{definition}[theorem]{Definition}
\newtheorem{example}[theorem]{Example}
\theoremstyle{remark} 
\newtheorem{remark}[theorem]{Remark}
\newcommand{\ov}{\overline}
\newcommand{\angled}[1]{\langle#1\rangle}
\newcommand {\mr}{\mathrm}
\newcommand{\Xa}{X^{\ast}}
\newcommand{\Pa}{\Pi^{\ast}}
\newcommand{\Xb}{X^{\scalebox{0.53}{$\triangle$}}}
\newcommand{\ui}[1]{u_{#1}^{-1}}
\newcommand{\di}[1]{d_{#1}^{-1}}
\newcommand{\prism}[2]{\mr{Prism}(#1,#2)}
\begin{document}

\title[
Large-type Artin groups are systolic
]{
Large-type Artin groups are systolic
}

\author{Jingyin Huang}
\address{Dept.\ of Math.\ \& Stats., McGill University\\ Montreal, Quebec, Canada H3A 0B9}
\email{jingyin.huang@mcgill.ca}

\author{Damian Osajda}
\address{Instytut Matematyczny,
Uniwersytet Wroc\l awski\\
pl.\ Grun\-wal\-dzki 2/4,
50--384 Wroc{\l}aw, Poland}
\address{Institute of Mathematics, Polish Academy of Sciences\\
\'Sniadeckich 8, 00-656 War\-sza\-wa, Poland}
\address{Dept.\ of Math.\ \& Stats., McGill University\\ Montreal, Quebec, Canada H3A 0B9}
\email{dosaj@math.uni.wroc.pl}

\subjclass[2010]{{20F65, 20F36, 20F67}} \keywords{large-type Artin group, systolic group}

\date{\today}

\begin{abstract}
We prove that Artin groups from a class containing all large-type Artin groups are systolic. This provides a concise yet precise description of their geometry.
Immediate consequences are new results 
concerning large-type Artin groups: biautomaticity; existence of $EZ$-boundaries; the Novikov conjecture;
descriptions of finitely presented subgroups, of virtually solvable subgroups, and of centralizers of elements; the Burghelea conjecture; existence of low-dimensional models for classifying spaces for some
families of subgroups.
\end{abstract}

\maketitle
\tableofcontents
\setcounter{tocdepth}{2}

\section{Introduction}
\label{s:intro}

%

\subsection{Background and the Main Theorem}
Let $\Gamma$ be a finite simple graph with its vertex set denoted by $V$. Let each edge of $\Gamma$ be labeled by a positive integer at least two. The \emph{Artin group with defining graph $\Gamma$}, denoted $A_{\Gamma}$, is the group whose generating set is $V$, and whose relators are of the form $\underbrace{aba\cdots}_{m}=\underbrace{bab\cdots}_{m}$ for each $a$ and $b$ spanning an edge labeled by $m$. 

It is an open question whether all Artin groups are non-positively curved in the sense that they act geometrically (i.e.\ properly and cocompactly by isometries) on non-positively curved spaces. One of the earlier motivations for this question comes from the seminal work of Charney and Davis \cite{charney1995k}, where they put a $CAT(0)$ metric on the modified Deligne complex for Artin groups of type FC as well as for $2$-dimensional Artin groups, and deduce the $K(\pi,1)$ conjecture for these Artin groups. Though the action of an Artin group on its modified Deligne complex is not proper, this naturally leads to the question of whether one can directly construct $CAT(0)$ spaces on which Artin groups act geometrically. This question is of independent interest to the $K(\pi,1)$ conjecture, since one can deduce many finer group theoretic and geometric consequences given the existence of such action. Here is a summary of Artin groups which are known to act geometrically on $CAT(0)$ spaces:
\begin{enumerate}
	\item right-angled Artin groups \cite{ChaDav1995};
	\item certain classes of $2$-dimensional Artin groups \cite{brady2002two,BradyMcCammond2000};
	\item Artin groups of finite type with three generators \cite{brady2000artin};
	\item $3$-dimensional Artin groups of type FC \cite{bell2005three};
	\item spherical Artin groups of type $A_4$ and $B_4$ \cite{brady2010braids};
	\item the $6$-strand braid group \cite{haettel20166}.
\end{enumerate}

In this paper we focus on Artin groups of \emph{large type}, i.e.\ those whose defining graphs have edge labels of value at least three. Large-type Artin groups were first introduced and studied by Appel and Schupp \cite{AppelSchupp1983}.
It is still unknown whether all Artin groups of large type are $CAT(0)$, though some partial results were obtained in \cite{BradyMcCammond2000}. Moreover, most Artin groups of large type can not act geometrically on $CAT(0)$ cube complexes, even up to passing to finite index subgroups \cite{huang2015cocompactly,haettel2015cocompactly}.
\medskip

Instead of metric non-positive curvature, we turn our attention towards a combinatorial 
counterpart. Examples of combinatorially non-positively curved spaces and groups are: small cancellation groups, 
CAT(0) cubical groups, and systolic groups. There are many advantages to the combinatorial approach. For example, 
biautomaticity has been proved in various combinatorial settings, while it is still an open problem (with a plausible
negative answer) for CAT(0) groups (see the discussion in Subsection~\ref{s:applications} below). 

A suitable setting for our approach are Artin groups in the following class.
An Artin group is of \emph{almost large type} if in the defining graph $\Gamma$ there is no triangle with an edge labeled by two and no square with three edges labeled by two. Clearly, large-type
Artin groups are of almost large type. Right-angled Artin groups with their defining graphs being triangle free and square free are examples
of almost large-type Artin groups that are not of large type. Our main result is the following (see Theorem~\ref{thm:main} in Section~\ref{s:general}).
\begin{maitheorem}
	Every Artin group of almost large type is systolic. 
\end{maitheorem}
\emph{Systolic groups} are groups acting geometrically on \emph{systolic complexes}. The latter are simply connected simplicial complexes
satisfying some local combinatorial conditions implying many non-positive-curvature-like features (see Subsection~\ref{s:applications}, and Section~\ref{s:syst} below for some details).  
Systolic complexes were first introduced by Chepoi \cite{Chepoi2000} under the name \emph{bridged complexes}.
However, \emph{bridged graphs}, one-skeleta of systolic complexes, were studied earlier in the context of metric graph theory.
They were introduced by Soltan-Chepoi \cite{SoltanChepoi1983} and Farber-Jamison \cite{FarberJamison1987}.
Systolic complexes were rediscovered independently by Januszkiewicz-\'Swi{\c a}tkowski \cite{JanuszkiewiczSwiatkowski2006} and by Haglund \cite{Haglund} in the context of geometric group theory.
The combinatorial approach to non-positive curvature allowed for the construction of groups and complexes
with interesting properties. In particular, the first examples discovered of high-dimensional
hyperbolic Coxeter groups were systolic. The theory of systolic complexes and groups has been developed extensively
providing new applications (see e.g.\ \cite{JanuszkiewiczSwiatkowski2007,Wise2003-sixtolic,Swiatkowski2006,OsajdaPrytula} and references therein).  

Let us note that there have been other very successful approaches to Artin groups using combinatorial versions
of non-positive curvature. Those include: using small cancellation \cite{AppelSchupp1983,Pride,Peifer},
CAT(0) cube complexes (the case of right-angled Artin groups), and Bestvina's approach to
Artin groups of finite type \cite{Bestvina1999}.

To prove our main theorem, we construct a systolic complex on which the Artin group acts. This complex is a thickening of the presentation complex of the Artin group. Disk diagrams in systolic complexes are very simple (\cite[Lemma 4.2]{Elsner2009-flats}), so they can used to study disk diagrams in the presentation complexes of these Artin groups through our systolic thickening. Hence we believe that our complexes can be used to prove finer properties of Artin groups of almost large type, beyond those presented in Subsection~\ref{s:applications}. Also we expect that our approach can be adapted for more general classes of Artin groups.

\subsection{Comments on the proof}
First we consider the special case of $A_\Gamma$ where the label of each edge in $\Gamma$ is three. Let $X^{\ast}_{\Gamma}$ be the universal cover of the presentation complex of $A_{\Gamma}$. Then each $2$-cell of $X^{\ast}_{\Gamma}$ is a hexagon. We put a new vertex (called an \emph{interior vertex}) in the interior of each $2$-cell and subdivide each $2$-cell into $6$ triangles around this new vertex. One naturally wants to metrize such a complex by declaring each triangle to be a Euclidean equilateral triangle. However, such metric is not $CAT(0)$, since there exist pairs of $2$-cells intersecting along two edges, and this leads to positively curved points (as vertex $v$ in Figure~\ref{fig:short}). We think of the configuration around $v$ as a ``corner" inside a $3$-dimensional Euclidean space that we would like to ``fill in" to kill the positive curvature. Specifically, we add an edge between the interior vertices of every pair of $2$-cells whose intersection contains $\geqslant 2$ edges, and take the flag complex. Though the new complex is still not $CAT(0)$, it appears to have enough non-positive curvature properties to work with and the suitable language to realize this intuition is the theory of systolic complexes.
\begin{figure}[h!]
	\centering
	\includegraphics[scale=0.6]{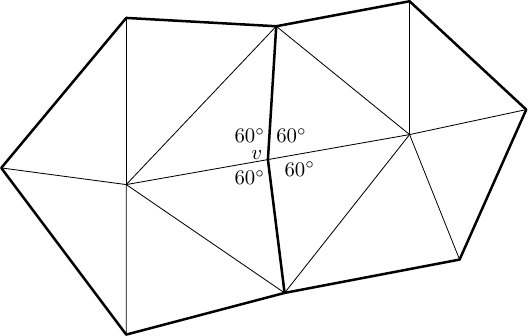}
	\caption{The vertex $v$ is positively curved.}
	\label{fig:short}
\end{figure}

Next we consider the more general case of $A_\Gamma$ of large type. Again we start with the universal cover of the presentation complex and subdivide each $2$-cell, now a $2n$-gon for some $n\geqslant 3$, into triangles. Among the many possibilities of subdivision, we choose the one as in Figure~\ref{f:cell} (more than one interior vertices are added when $n>3$) based on the following considerations:
\begin{enumerate}
	\item Each triangle is a Euclidean equilateral triangle.
	\item Each $2$-cell with the subdivision is flat.
\end{enumerate}
The reason for (2) is that $A_\Gamma$ contains many rank two free abelian subgroups, so intentionally creating negative curvature at a point forces there to be positive curvature at some other points.

As in the previous case, a pair of $2$-cells with a large piece between them lead to points of positive curvature. We add edges between the interior vertices of these $2$-cells to create a \textquotedblleft prism-like\textquotedblright\ configuration as in Figure~\ref{f:cell2}, which resolves these positive curvature points. The general idea of adding edges in order to ``systolize" some complexes has been used before \cite{PrzytyckiSchwer}.

The bulk of this paper (Section~\ref{sec:link of vertices}) will be devoted to the study of the above complexes for the dihedral Artin groups (i.e. the $A_\Gamma$ with $\Gamma$ being an edge), since these complexes are our building blocks in the study of more general Artin groups. We show that these building blocks are systolic (Proposition~\ref{prop:real vertex link is six-large}, Proposition~\ref{prop:interior vertex link is six-large}). Moreover, the \textquotedblleft prism-like\textquotedblright\ configuration in the previous paragraph needs to be designed carefully so that there is no obstruction to systolicity if we glue these blocks together (Lemma~\ref{l:ain far from aout}, Lemma~\ref{lem:distance}). The procedure of gluing the building blocks together is explained in Section~\ref{s:general}. The complexes for almost large type Artin groups are defined in Definition~\ref{def:construct XGa}.

We end this subsection by noting that dihedral Artin groups are already very well-understood: they are virtually free times $\mathbb Z$, they are known to act on various complexes with features of non-positive curvature \cite{Bestvina1999,BradyMcCammond2000,brady2002two,huang2015cocompactly,haettel2015cocompactly}, and it is known that one can use them as building blocks to obtain $CAT(0)$ complexes for a certain family of Artin groups. However, these building blocks are not good enough for constructing $CAT(0)$ complexes for all Artin groups of large type (at the time of writing this paper, even the case where the defining graph $\Gamma$ is a complete graph on more than three vertices is not known).


\subsection{Immediate consequences of the Main Theorem}
\label{s:applications}
We gather immediate consequences of systolicity for almost large-type Artin groups in the following corollary.
To the best of our knowledge all the results listed here are new.
 Below we provide some details, in particular, we comment on earlier results.
\begin{cor}
	\label{c:appl}
	Let $G$ be an Artin group of almost large type. Then:
	\begin{enumerate}
		\item $G$ is biautomatic;
		\item $G$ has a boundary in the sense of \cite{OsajdaPrzytycki}, which captures the large-scale geometry of $G$. In particular, $G$ admits an EZ--structure, and hence the Novikov conjecture holds for $G$;
		\item finitely presented subgroups of $G$ are systolic, hence they are biautomatic, have solvable word problem, solvable conjugacy problem and all the other properties listed here;
		\item virtually solvable subgroups of $G$ are either virtually cyclic or virtually $\mathbb Z^2$;
		\item the Burghelea conjecture holds for $G$;
		\item the centralizer of an infinite order element of $G$ is commensurable with $F_n \times \mathbb Z$ or $\mathbb Z$;
		\item $G$ admits a finitely dimensional model for $E_{\mathcal{VAB}}G$,  the classifying space for
		the family of virtually abelian subgroups of $G$.
	\end{enumerate}
\end{cor}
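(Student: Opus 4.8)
The strategy is to deduce each item from the Main Theorem together with known structural results about systolic groups and complexes. Since we have at our disposal a systolic complex $X_{\Gamma}$ of dimension $\max m_{ij}$ on which $G = A_{\Gamma}$ acts geometrically, the first step is to record this action and its basic properties (cocompactness, properness, finite dimensionality of $X_{\Gamma}$), after which items (1)--(7) follow from citations to the systolic literature.

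For item (1), biautomaticity of systolic groups is a theorem of Januszkiewicz--\'Swi\k{a}tkowski \cite{JanuszkiewiczSwiatkowski2006}; one just invokes it. For item (2), Osajda--Przytycki constructed an $EZ$--boundary (equivalently, an $EZ$--structure in the sense of Farrell--Lafont) for every systolic group, and an $EZ$--structure implies the Novikov conjecture; again one cites these. For item (3), the fact that finitely presented subgroups of systolic groups are themselves systolic is due to Wise (via the local structure of systolic complexes, using that such a subgroup acts freely cocompactly on a systolic complex built from a suitable covering/development argument); cite it. For item (4), virtually solvable subgroups of systolic groups are virtually cyclic or virtually $\mathbb{Z}^{2}$ by a theorem of Elsner (or Januszkiewicz--\'Swi\k{a}tkowski, flat torus type arguments); cite it. For item (5), the Burghelea conjecture holds for groups of finite asymptotic dimension / for systolic groups as established by Engel--Li (or the relevant reference), and the Bass conjecture is a known consequence of the Burghelea conjecture; cite it. For item (6), the description of centralizers of infinite-order elements in systolic groups as commensurable with $F_{n}\times\mathbb{Z}$ or $\mathbb{Z}$ follows from the analysis of minimal displacement sets / flats and their stabilizers in systolic complexes (Elsner, or the upcoming references cited in the paper); cite it. For item (7), a finite-dimensional model for $E_{\mathcal{VAB}}G$ exists whenever $G$ is systolic: one combines the finite-dimensional model for $\underline{E}G$ coming from the systolic complex with the classification of virtually abelian subgroups from item (4) and a pushout/amalgamation construction for classifying spaces of families (L\"uck's machinery); cite the relevant construction.

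The main obstacle is not logical depth but bookkeeping: one must verify that the hypotheses of each cited black box are genuinely met by $(G, X_{\Gamma})$. In particular one should check that the action on $X_{\Gamma}$ is geometric in the precise sense each author requires (properness, cocompactness, and for some results freeness after passing to a torsion-free finite-index subgroup, which exists since $A_{\Gamma}$ is torsion-free), that $X_{\Gamma}$ is a genuine systolic \emph{simplicial} complex (not merely systolic as a graph), and that the dimension bound $\dim X_{\Gamma} = \max m_{ij}$ is what feeds into the finite-dimensionality statements in (2), (3) and (7). None of these checks is hard given the construction of $X_{\Gamma}$, but they are the only content beyond assembling citations, so the ``proof'' is essentially a guided tour through the systolic toolkit with the Main Theorem supplying the single nontrivial input.
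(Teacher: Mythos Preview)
Your proposal is correct and matches the paper's approach: the Corollary is not given a formal proof but is justified item-by-item via citations to the systolic literature, with the Main Theorem as the sole nontrivial input. A few of your attributions differ from the paper's (item (4) is proved in the paper as a separate Solvable Subgroup Theorem combining results of Osajda--Prytu{\l}a, Gersten--Short, Mal'cev, and Januszkiewicz--\'Swi\k{a}tkowski rather than a single citation to Elsner; item (5) is Engel--Marcinkowski, not Engel--Li; item (7) is cited directly from Osajda--Prytu{\l}a rather than assembled via L\"uck's machinery), but these are bibliographic details, not gaps in the argument.
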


\noindent (1)
Biautomaticity for systolic groups has been established by Januszkiewicz-\' Swi\c atkowski \cite{JanuszkiewiczSwiatkowski2006,Swiatkowski2006}. Biautomaticity of large-type Artin groups was a well known open problem.
Partial results 
were obtained by: Pride together with Gersten and Short (triangle-free Artin groups) \cite{Pride,Gersten}, Charney \cite{Charney1992} (finite type), Peifer \cite{Peifer} (extra-large type, i.e., $m_{ij}\geqslant 4$, for $i\neq j$), Brady-McCammond \cite{BradyMcCammond2000} (three-generator large-type Artin groups and some generalizations), Holt-Rees \cite{HoltRees2012,HR2013} (sufficiently large Artin groups are shortlex automatic with respect to the standard generating set).

Biautomaticity has many important consequences. Among them are: quadratic Dehn function, solvability of
the Word Problem, and of the Conjugacy Problem. 
Chermak \cite{Chermak} proved that the Word Problem is solvable for $2$-dimensional Artin groups, hence for
all Artin groups of almost large type. The Conjugacy Problem was known to be solvable for large-type Artin groups
by results of Appel-Schupp \cite{AppelSchupp1983} and Appel \cite{Appel1984}, but there have been no  results
about other Artin groups of almost large type in general.
It follows from a work of Holt-Rees \cite{HR2013} that the Dehn function is quadratic for sufficiently large
Artin groups. All almost large-type Artin groups are sufficiently large. 
To the best of our knowledge there have been no general results concerning the above problems for
finitely presented subgroups of Artin groups in question. As explained below in (3) our results
apply to them as well.

\medskip

\noindent
(2) Let $G$ act geometrically on a systolic complex $X$. Osajda and Przytycki \cite{OsajdaPrzytycki} constructed a compactification
of $X$ by a $Z$-set $\ov X=X\cup \partial X$. This defines the so-called $EZ$-structure
for $G$ \cite{FarrellLafont2005}, and $\partial X$ becomes a sort of a boundary of $G$. Such structures
are known only for a few classes of groups, most notably, for Gromov hyperbolic groups and CAT(0) groups.
Closer relations between algebraic properties of $G$ and the dynamics of its action on $\partial X$ are
exhibited in \cite{Prytula2017}.
Existence of an $EZ$-structure implies, in particular, the Novikov conjecture \cite{FarrellLafont2005}.  
Ciobanu-Holt-Rees \cite{CiobanuHoltRees2016} established the (stronger) Baum-Connes conjecture for a subclass of 
large-type Artin groups, including Artin groups of extra-large type. They did it by proving the rapid decay property for such groups. 

\medskip

\noindent
(3) Using towers of complexes Wise \cite{Wise2003-sixtolic} showed that finitely presented subgroups
of torsion-free systolic groups are systolic. For all systolic groups the result has been shown in \cite{HanlonMartinez,Zadnik}.

\medskip

\noindent
(4) By Theorem~\ref{t:virtsolv} in Section~\ref{s:syst} below,  virtually solvable subgroups of systolic groups
are either virtually cyclic or virtually $\mathbb Z^2$. Bestvina \cite{Bestvina1999} showed that solvable
subgroups of Artin groups of finite type are abelian. He used another combinatorial version of
non-positive curvature.
It is an open question whether virtually solvable subgroups of biautomatic groups are finitely generated virtually abelian groups. The result is known for polycyclic subgroups of biautomatic groups (and so for biautomatic groups all of whose abelian subgroups are finitely generated) by work of Gersten-Short \cite{GerstenShort1991}. 
\medskip

\noindent
(5) The Burghelea conjecture concerns the periodic cyclic homology of complex group rings; see \cite{EngelMarcinkowski2016} and references therein for details. It is known to be false in general,
but has been established for, among others, hyperbolic groups. Engel-Marcinkowski \cite{EngelMarcinkowski2016} showed that the Burghelea conjecture holds for systolic groups.
The Burghelea conjecture implies the strong Bass conjecture that, in turn implies
the classical Bass conjecture.  

\medskip

\noindent
(6) Elsner \cite{Elsner2009-isometries} showed that infinite order elements of systolic groups admit
a kind of axis, similarly to the hyperbolic and CAT(0) cases. Verifying a conjecture by Wise \cite{Wise2003-sixtolic}, it is shown in \cite{OsajdaPrytula} that centralizers of infinite order elements
in systolic groups are commensurable with a product of $\mathbb Z$ and a finitely generated free
 group (possibly trivial or $\mathbb Z$). In the special case of $2$-dimensional Artin groups of hyperbolic type, this result was obtained by Crisp \cite{MR2174269}. In fact, Crisp computes explicitly the centralizer of a given element up to commensurability.

\medskip

\noindent
(7) By a result of Degrijse \cite{Degrijse2017} two-dimensional Artin groups admit finite dimensional
models for the family of virtually cyclic subgroups. A similar result has been proved in \cite{OsajdaPrytula}
for systolic groups, where it is also shown that systolic groups admit finite dimensional models
for classifying spaces for the family of virtually abelian subgroups.

\medskip

\noindent
{\bf Acknowledgments.} 
We thank Tomasz Prytu\l a for pointing out the proof of the Solvable Subgroup Theorem.
We thank Nima Hoda, Piotr Przytycki, and the anonymous referee for useful remarks.
The authors were partially supported by (Polish) Narodowe Centrum Nauki, grant no.\ UMO-2015/\-18/\-M/\-ST1/\-00050. The paper was written while D.O.\ was visiting McGill University.
We would like to thank the Department of Mathematics and Statistics of McGill University
for its hospitality during that stay.

\section{Systolic complexes and systolic groups}
\label{s:syst}
All graphs considered in this paper neither contain edge-loops nor multiple edges. For vertices $w,v_1,v_2,\ldots$
of a graph, we write $w\sim v_1,v_2,\ldots$ when $w$ is \emph{adjacent} to each $v_i$, that is, there is an edge containing $w$ and $v_i$. If $w$ is not adjacent to any of $v_i$ then we write $w\nsim v_1,v_2,\ldots$.
A simplicial complex $X$ is \emph{flag} if every set of pairwise adjacent vertices of $X$ spans a simplex
in $X$. In other words, a flag simplicial complex $X$ is determined by its \emph{$1$-skeleton} $X^{(1)}$,
being a simplicial graph. 
A subcomplex $Y$ of a simplicial complex $X$ is \emph{full} if any set of vertices of $Y$ spanning a simplex
in $X$ spans a simplex in $Y$ as well.
A subgraph of a graph is \emph{full} if it is a full subcomplex. 
The \emph{link} lk$(v,\Gamma)$ of a vertex $v$ in a graph $\Gamma$ is the full subgraph of $\Gamma$ spanned by vertices
adjacent to $v$.
A graph is \emph{$6$-large} if there are no simple cycles of length $4$ and $5$ being full 
subgraphs.

\begin{definition}
	\label{d:systolic}
	A flag simplicial complex is \emph{systolic} if it is connected, simply connected, and links
	of vertices in its $1$-skeleton are $6$-large.
\end{definition}

In particular, any $2$-dimensional piecewise Euclidean $CAT(0)$ complex whose $2$-cells are equilateral triangles satisfies the above definition. In general, systolic complexes are not $2$-dimensional and they are not necessarily $CAT(0)$ with the most natural metric -- the piecewise Euclidean metric with all edges having
length $1$. Nevertheless, systolic complexes possess many features typical for nonpositively curved spaces.
One of them is a version of the Cartan-Hadamard theorem stating that finite dimensional systolic complexes are contractible. Another important feature is that for any embedded simplicial loop in a systolic complex
there is a systolic disc diagram filling it. Such a diagram can be equipped with a CAT(0) structure. See e.g.\ \cite{Chepoi2000,Haglund,Wise2003-sixtolic,JanuszkiewiczSwiatkowski2006,JanuszkiewiczSwiatkowski2007,Elsner2009-flats,Elsner2009-isometries,OsajdaPrzytycki,Zadnik,OsajdaPrytula} for details and further information.

Groups acting geometrically on systolic complexes are called \emph{systolic}.
Few consequences of being a systolic group are listed in Corollary above.
The following theorem is a consequence of known results on systolic complexes but has been
not stated in the literature.\footnote{It is mistakenly claimed in \cite{JanuszkiewiczSwiatkowski2006,JanuszkiewiczSwiatkowski2007} that a form of Solvable Subgroup Theorem follows immediately from biautomaticity. In fact it is an open question whether virtually solvable subgroups of biautomatic groups are virtually abelian.}
\begin{theorem}[Solvable Subgroup Theorem]
	\label{t:virtsolv}
	Solvable subgroups of systolic groups are either virtually cyclic or virtually $\mathbb{Z}^2$. 
\end{theorem}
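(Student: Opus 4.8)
The plan is to deduce the Solvable Subgroup Theorem from two pieces of structure theory for systolic groups that already exist in the literature: first, that systolic groups are \emph{semihyperbolic} (indeed biautomatic, by Januszkiewicz--\'Swi\c atkowski), and second, the much finer flat/quasiflat rigidity available in the systolic setting. The key point is that biautomaticity alone is \emph{not} enough (as the footnote stresses), so one must use genuinely systolic input. Concretely, I would invoke the result of Elsner that an infinite order isometry of a systolic complex admits a combinatorial axis, together with the structure of flat subcomplexes: Januszkiewicz--\'Swi\c atkowski (the ``flat torus theorem'' in the systolic world) show that a systolic complex contains no isometrically embedded $2$--dimensional flats beyond those that are, up to quasi-isometry, genuine Euclidean planes tiled by triangles, and an abelian subgroup of rank $\geq 3$ would force a $3$--flat, which is excluded because systolic complexes are asymptotically $2$--dimensional in the relevant sense (systolic groups do not contain $\mathbb{Z}^3$ — this is a theorem of Januszkiewicz--\'Swi\c atkowski).

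First I would reduce to the virtually abelian case. A solvable subgroup $H$ of a systolic group $G$ is itself semihyperbolic (subgroups handled via the convexity properties of the biautomatic structure, or directly since $H$ acts on the systolic complex), and a classical fact — for instance via the Bieri--Strebel / Gersten--Short style arguments, or via the work on translation lengths in biautomatic groups — shows that a solvable group acting properly with a semihyperbolic-type geometry on a space of this kind is finitely generated and virtually abelian; alternatively, one uses that $H$ contains no Baumslag--Solitar subgroup $BS(1,n)$ with $n\ge 2$ (these are not systolic, again by known results on systolic groups) to conclude that $H$ is virtually $\mathbb{Z}^k$ for some $k$. Then I would rule out $k\geq 3$: a finite index $\mathbb{Z}^k\leq H$ would act properly cocompactly on a subset of the systolic complex quasi-isometric to $\mathbb{R}^k$, contradicting the fact that systolic groups (equivalently their systolic complexes) do not contain quasi-isometrically embedded $\mathbb{R}^3$; this is exactly where the two-dimensional nature of systolicity — no isometrically embedded $3$--cube-worth of flat, Elsner's thin triangles in flat minimal displacement sets — is used.

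Finally I would assemble: $H$ is virtually $\mathbb{Z}^k$ with $k\in\{0,1,2\}$, and $k=0$ gives finite hence virtually cyclic, $k=1$ gives virtually cyclic, $k=2$ gives virtually $\mathbb{Z}^2$, which is the claim. The main obstacle — and the reason the statement ``has not been stated in the literature'' despite being a consequence of known results — is precisely the first reduction: one must be careful that $H$ is \emph{finitely generated} and does not contain solvable-but-not-virtually-abelian subgroups such as $BS(1,2)$ or lamplighter-type groups, and the cleanest route is to quote (a) the flat torus theorem for systolic complexes of Januszkiewicz--\'Swi\c atkowski, which pins abelian subgroups to flat minimal displacement subcomplexes, and (b) the absence of $\mathbb{Z}^3$ and of $BS(1,n)$, $n\ge 2$, in systolic groups. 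Once those are in hand the deduction is short; the work is in correctly citing and stitching together the systolic literature rather than in any new argument, which is consistent with the paper's presentation of this as a ``consequence of known results.''
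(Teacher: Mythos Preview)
Your overall architecture is right---reduce solvable to virtually abelian, then bound the rank by $2$---and your rank-bound step via the exclusion of $\mathbb{Z}^3$ in systolic groups (Januszkiewicz--\'Swi\c atkowski) matches the paper. The gap is in the first reduction. You correctly flag that biautomaticity alone is not known to suffice, but the substitutes you offer do not close the gap either: absence of $BS(1,n)$ for $n\ge 2$ does \emph{not} force a solvable group to be virtually abelian (the lamplighter $\mathbb{Z}_2\wr\mathbb{Z}$ is metabelian, contains no $BS(1,n)$ with $n\ge 2$, and is not virtually abelian), and the flat torus theorem and Elsner's axes control abelian subgroups, not solvable ones directly. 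You note finite generation as a concern but do not say where it comes from, and ``semihyperbolic-type geometry'' is exactly the route the footnote warns against.

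The paper's argument is short and rests on a different systolic input than any you list. The key fact is that \emph{virtually abelian subgroups of systolic groups are finitely generated} (Osajda--Prytu\l a). In particular every abelian subgroup of a solvable $H\le G$ is finitely generated; Mal'cev's theorem then gives that $H$ is polycyclic, and now one \emph{can} use biautomaticity: Gersten--Short prove that polycyclic subgroups of biautomatic groups are virtually abelian. Finally Januszkiewicz--\'Swi\c atkowski bound the rank by $2$. So the genuinely systolic ingredient is the finite-generation result for abelian subgroups, which simultaneously rules out lamplighter-type phenomena and feeds Mal'cev; biautomaticity is invoked only in the polycyclic case, where Gersten--Short applies. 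Your sketch is missing both the Mal'cev step and the Osajda--Prytu\l a input that makes it applicable.
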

\begin{proof}
Let $G$ be a systolic group.
By \cite[Proposition 5.10]{OsajdaPrytula} virtually abelian subgroups of $G$ are finitely generated.
Hence, the following argument by Gersten and Short \cite[page 154]{GerstenShort1991} shows that
solvable subgroups of $G$ are virtually abelian:
By a theorem of Mal'cev \cite[Theorem 2 on page 25]{Segal1083} such subgroups are polycyclic, and thus, by \cite[Theorem 6.15]{GerstenShort1991}
they are virtually abelian.
	By 
	\cite[Corollary 6.5]{JanuszkiewiczSwiatkowski2007} 
	virtually abelian subgroups of $G$ have rank at most $2$. 
\end{proof}

\section{The complexes for $2$-generated groups}
\label{s:dihedral}

\subsection{Precells in the presentation complex}
\label{subsec:precells}
Let $DA_n$ be the $2$-generator Artin group presented by $\angled{a,b\mid \underbrace{aba\cdots}_{n} =
	\underbrace{bab\cdots}_n}$. We assume $n\geqslant 2$. We define $DA^{+}_n$ to be the associated Artin monoid presented by the same generators and relations as $DA_n$. 
\begin{lemma}
	\label{lem:words in the positive monoid}
Let $w_1$ and $w_2$ be two words in the free monoid generated by $a$ and $b$. Suppose $w_1=w_2$ in $DA^{+}_n$. Then
\begin{enumerate}
	\item $w_1$ and $w_2$ have the same length;
	\item if $w_1$ has length $\leqslant n$, then either $w_1$ and $w_2$ are the same word, or $w_1$ equals to one of $\underbrace{aba\cdots}_{n}$ and $\underbrace{bab\cdots}_n$, and $w_2$ equals to another.
\end{enumerate}
\end{lemma}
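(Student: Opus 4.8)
The plan is to unwind the meaning of the equation $w_1=w_2$ in the monoid $DA_n^+$ as a finite sequence of elementary rewriting moves, and then to exploit the one trivial feature of the defining relation: both of its sides, $\underbrace{aba\cdots}_n$ and $\underbrace{bab\cdots}_n$, are words of the same length $n$.

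First I would recall the standard description of the congruence defining $DA_n^+$: two words $u,v$ in the free monoid on $\{a,b\}$ represent the same element of $DA_n^+$ if and only if there is a finite chain $u=u_0,u_1,\ldots,u_k=v$ of words in the free monoid such that, for each $i$, the word $u_{i+1}$ is obtained from $u_i$ by replacing one contiguous occurrence of $\underbrace{aba\cdots}_n$ by $\underbrace{bab\cdots}_n$ or vice versa. With this in hand, part (1) is immediate: each elementary move replaces a factor of length $n$ by a factor of length $n$, hence preserves the total length of the word, and iterating along the chain from $w_1$ to $w_2$ gives $|w_1|=|w_2|$.

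For part (2), assume $|w_1|\leqslant n$. By part (1) every word $u_i$ in a chain from $w_1$ to $w_2$ also has length $\leqslant n$. An elementary move applied at $u_i$ requires $u_i$ to contain a factor of length $n$; since $|u_i|\leqslant n$, this forces $|u_i|=n$ and forces that factor to be all of $u_i$. Consequently $u_i$ is literally one of $\underbrace{aba\cdots}_n$, $\underbrace{bab\cdots}_n$, and $u_{i+1}$ is the other. Thus either the chain uses no move, in which case $w_1=w_2$ as words, or some move occurs, in which case $w_1$ is already one of the two alternating words of length $n$, each subsequent $u_i$ simply flips between the two, and $w_2$ is one of them as well. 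Since $\underbrace{aba\cdots}_n$ and $\underbrace{bab\cdots}_n$ differ in their first letter and hence are distinct words, this gives exactly the stated dichotomy.

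I do not expect a genuine obstacle: the only point that needs a moment's care is the passage from ``equal in $DA_n^+$'' to ``connected by a chain of elementary moves'', i.e. writing down the defining congruence explicitly; once that is set up, both assertions are routine length bookkeeping, which matches the paper's remark that the lemma is immediate from the definition.
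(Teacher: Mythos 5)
Your proof is correct and is exactly the argument the paper has in mind when it declares the lemma ``immediate from definition'': equality in $DA_n^+$ means a chain of elementary replacements of one side of the relation by the other, both sides have length $n$, so length is preserved, and a word of length $\leqslant n$ admitting such a replacement must itself be one of the two alternating words. No gaps; the handling of the trivial chain versus the flipping chain gives precisely the stated dichotomy.
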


\begin{proof}
Note that $w_1=w_2$ implies that one can obtain $w_2$ from $w_1$ by applying the relation finitely many times. But applying the relation does not change the length of the word, thus (1) follows. For (2), if $w_1$ has length $\le n$ and $w_1$ is not equal to one of the words appearing in the relation, then there is no way to apply the relation to transform $w_1$ to a different word, thus $w_1$ and $w_2$ have to be the same word.
\end{proof}

The following is a special case of \cite[Theorem 4.14]{Deligne}.
\begin{theorem}
	\label{thm:injective artin monoid}
The natural map $DA^{+}_n\to DA_n$ is injective.	
\end{theorem}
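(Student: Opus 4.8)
The statement is a special case of \cite[Theorem~4.14]{Deligne} because the dihedral Coxeter group $I_2(n)$ is finite, so $DA_n$ is an Artin group of spherical type, and positive monoids of spherical-type Artin groups embed into the corresponding groups (this is also the classical theorem of Brieskorn--Saito). The plan is to recover this in the two-generator case along the standard Garside-theoretic lines, which here become quite transparent; the one genuinely technical ingredient, cancellativity, can be proved by a length induction that uses Lemma~\ref{lem:words in the positive monoid} as its base case.

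First I would record that $DA^{+}_n$ is \emph{atomic}: by Lemma~\ref{lem:words in the positive monoid}(1) the length of a positive word is an invariant of the monoid element it represents, hence $a$ and $b$ are atoms, no nontrivial element is invertible, and every element has finitely many divisors. Next comes the core: $DA^{+}_n$ is left and right cancellative, and any two elements admit a common right multiple (equivalently, by the symmetric argument, a common left multiple). Cancellativity I would establish by induction on length, handling the short-word case with Lemma~\ref{lem:words in the positive monoid}(2) and running the inductive step using the single defining relation, which is length-preserving and ``balanced'' — this is precisely the dihedral instance of Garside's word-reversing / Brieskorn--Saito argument, with no surprises. For common multiples I would use that $\Delta := \underbrace{aba\cdots}_{n} = \underbrace{bab\cdots}_{n}$ is a Garside element: its set of left divisors equals its set of right divisors and contains $a$ and $b$, from which one shows by induction that every element of $DA^{+}_n$ of length $\leqslant kn$ left- and right-divides $\Delta^{k}$; in particular any two elements have a common multiple on each side.

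With these properties, Ore's localization theorem applies: a cancellative monoid in which any two elements have a common right multiple embeds into its group of (right) fractions $G$, via an injection $\iota\colon DA^{+}_n \hookrightarrow G$. Finally I would identify $G$ with $DA_n$. Since $G$ is generated by $\iota(a),\iota(b)$, which satisfy the monoid relation $\underbrace{aba\cdots}_{n} = \underbrace{bab\cdots}_{n}$, there is a surjection $\pi\colon DA_n \twoheadrightarrow G$ with $\pi(a)=\iota(a)$, $\pi(b)=\iota(b)$; and the natural monoid map $j\colon DA^{+}_n \to DA_n$ extends, by the universal property of the group of fractions, to a group homomorphism $\phi\colon G \to DA_n$ with $\phi\circ\iota = j$. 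As $\phi\circ\pi$ is the identity on generators it is the identity on $DA_n$, so $\pi$ is an isomorphism and $j = \phi\circ\iota = \pi^{-1}\circ\iota$ is injective.

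The main obstacle is cancellativity of $DA^{+}_n$: for general spherical-type monoids this is the delicate part of Brieskorn--Saito's analysis, although in the two-generator case Lemma~\ref{lem:words in the positive monoid} makes the induction manageable by hand. One could instead try to sidestep it by exhibiting an explicit normal form for elements of $DA^{+}_n$ inside $DA_n$ — for example, $\Delta^{k}u$ with $u$ a proper prefix of $\underbrace{aba\cdots}_{n}$ or of $\underbrace{bab\cdots}_{n}$ — but proving that such a form exists, is unique, and behaves well under multiplication is essentially the same combinatorial computation, so in practice I would either carry out the length induction or simply cite \cite{Deligne} as the authors do.
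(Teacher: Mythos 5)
The paper gives no proof of this theorem at all: it is stated as a special case of \cite[Theorem~4.14]{Deligne}, which is exactly the first line of your proposal, so your approach matches the paper's. Your supplementary Garside/Ore sketch is the standard alternative route and is sound in outline (with cancellativity correctly flagged as the one nontrivial step), but it is not what the paper does.
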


Let $P_n$ be the standard presentation complex of $DA_n$. Namely the $1$-skeleton of $P_n$ is the wedge of two oriented circles, one labeled $a$ and one labeled $b$. Then we attach the boundary of a closed $2$-cell $C$ to the $1$-skeleton with respect to the relator of $DA_n$. Let $C\to P_n$ be the attaching map.

Let $\Xa$ be the universal cover of the standard presentation complex of $DA_n$. Edges of $\Xa$ are endowed with induced orientations and labellings from $P_n$. The following is a direct consequence of Theorem \ref{thm:injective artin monoid} and Lemma \ref{lem:words in the positive monoid}.

\begin{corollary}
	\label{cor:cell embeded}
Any lift of the map $C\to P_n$ to $C\to \Xa$ is an embedding.
\end{corollary}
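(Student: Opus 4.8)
The plan is to show that any lift $\wt{C} \colon C \to \Xa$ of the attaching map $C \to P_n$ is injective on the closed $2$--cell, and then deduce that it is an embedding since $C$ is compact and $\Xa$ is Hausdorff. The boundary $\partial C$ carries the word $\underbrace{aba\cdots}_n = \underbrace{bab\cdots}_n$; under a lift it becomes an edge-path in $\Xa^{(1)}$, and the only non-injectivity that could occur is the identification of two points of $\partial C$ having the same image vertex in $\Xa^{(0)}$, or an internal point of $C$ colliding with a boundary point (the latter is impossible since the interior of $C$ maps to the interior of a $2$--cell). So the whole question reduces to: can two distinct vertices on $\partial C$ be sent to the same vertex of $\Xa$?

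First I would fix a base vertex $\tilde{v}_0 \in \Xa^{(0)}$ in the image, and recall that $\Xa^{(0)}$ is canonically identified with $DA_n$ (as the universal cover of the presentation complex, vertices are group elements, edges are right multiplications by $a^{\pm 1}, b^{\pm 1}$). Traversing $\partial C$ from a chosen corner, the two halves of the boundary spell the positive words $w_1 = \underbrace{aba\cdots}_n$ and $w_2 = \underbrace{bab\cdots}_n$; reading prefixes of $w_1$ gives one arc of vertices $\tilde{v}_0, \tilde{v}_0 \cdot s_1, \tilde{v}_0 \cdot s_1 s_2, \dots$ and reading prefixes of $w_2$ gives the other arc, with the two arcs meeting again at $\tilde{v}_0 \cdot w_1 = \tilde{v}_0 \cdot w_2$ (equality in $DA_n$ by the defining relation). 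Two vertices on $\partial C$ coincide in $\Xa$ iff the corresponding prefixes are equal \emph{in $DA_n$}: a prefix $p$ of $w_1$ of length $i$ equals a prefix $q$ of $w_2$ of length $j$ (or two prefixes of the same word, or a prefix read backwards from the far corner) iff $p = q$ in $DA_n$.

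The key step is then to rule this out using the positive monoid. Both $p$ and $q$ are positive words of length $\leqslant n$; since $p = q$ in $DA_n$ and $DA^{+}_n \hookrightarrow DA_n$ by Theorem~\ref{thm:injective artin monoid}, we get $p = q$ already in $DA^{+}_n$. Now Lemma~\ref{lem:words in the positive monoid}(1) forces $i = j$, and Lemma~\ref{lem:words in the positive monoid}(2), applied to two positive words of length $\leqslant n$ that are equal in $DA^{+}_n$, says either $p$ and $q$ are literally the same word — which for two prefixes of $\{w_1, w_2\}$ of the same length $< n$ forces them to be prefixes of the \emph{same} arc and hence the same vertex of $\partial C$, a contradiction with "distinct" — or $\{p,q\} = \{w_1, w_2\}$ with length exactly $n$, which is precisely the endpoint where the two arcs legitimately meet. (A symmetric argument handles prefixes measured from the opposite corner of $\partial C$, using that the backward-read boundary words are again positive of length $\leqslant n$.) Hence no two distinct vertices of $\partial C$ collapse, the boundary path has no self-intersections beyond the forced ones, and $\wt{C}$ is injective, so it is an embedding.

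I expect the only real subtlety to be bookkeeping: making precise "which pairs of prefixes must be compared" as one walks around $\partial C$ (prefixes of $w_1$ vs.\ prefixes of $w_2$, and prefixes vs.\ suffixes coming from the two distinct corners of the bigon), and checking that in every case Lemma~\ref{lem:words in the positive monoid} leaves only the two legitimate coincidences (the two corners of $C$). Everything else is a direct consequence of the injectivity of the monoid map combined with the length and rigidity statements of Lemma~\ref{lem:words in the positive monoid}.
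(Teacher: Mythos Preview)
Your argument is correct and is exactly the approach the paper indicates: the paper states only that the corollary is ``a direct consequence of Theorem~\ref{thm:injective artin monoid} and Lemma~\ref{lem:words in the positive monoid}'', and your proof is precisely the unpacking of that sentence. One small simplification: since both boundary arcs are positive words read from the left tip $l$, every pair of vertices on $\partial C$ is already compared by prefixes-from-$l$, so the separate discussion of ``prefixes measured from the opposite corner'' is unnecessary.
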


These embedded disks in $\Xa$ are called \emph{precells}. The following is a picture of a precell $\Pa$. Note that $\Xa$ is a union of copies of $\Pa$'s.

\begin{figure}[h!]
	\centering
	\includegraphics[width=1\textwidth]{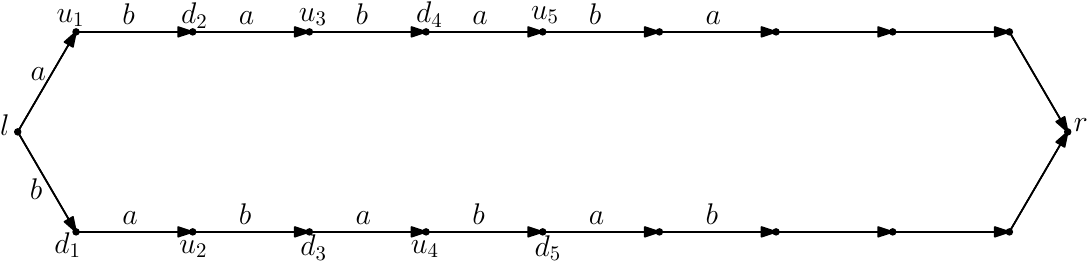}
	\caption{Precell $\Pa$}
	\label{f:precell}
\end{figure}

We label the vertices of $\Pa$ as in Figure \ref{f:precell}. More precisely, the left most vertex and right most vertex are labeled by $l$ and $r$, and they are called the \emph{left tip} and \emph{right tip} of $\Pa$. The boundary $\partial\Pa$ is made of two paths. The one starting at $l$, going along $\underbrace{aba\cdots}_{n}$ (resp.\ $\underbrace{bab\cdots}_{n}$), and ending at $r$ is called the \emph{upper half} (resp.\ \emph{lower half}) of $\partial\Pa$. Vertices in the interior of the upper half are labeled $u_1,d_2,u_3,d_4,\ldots$ from left to right. Vertices in the interior of the lower half are labeled $d_1,u_2,d_3,u_4,\ldots$ from left to right. The orientation of edges inside one half is consistent, thus each half has an orientation. Observe that vertices with labels $u_i$ (resp.\ $d_i$) are terminal (resp.\ initial)
vertices of edges labeled by $a$, and initial (resp.\ terminal) vertices of edges labeled by $b$.
\begin{corollary}
	\label{cor:connected intersection}
Let $\Pa_1$ and $\Pa_2$ be two different precells in $\Xa$. Then 
\begin{enumerate}
	\item either $\Pa_1\cap\Pa_2=\emptyset$, or $\Pa_1\cap\Pa_2$ is connected;
	\item if $\Pa_1\cap\Pa_2\neq\emptyset$ then $\Pa_1\cap\Pa_2$ is properly contained in the upper half or in the lower half of $\Pa_1$ (and of $\Pa_2$);
	\item if $\Pa_1\cap\Pa_2$ contains at least one edge, then one end point of $\Pa_1\cap\Pa_2$ is a tip of $\Pa_1$, and another end point of $\Pa_1\cap\Pa_2$ is a tip of $\Pa_2$, moreover, among these two tips, one is a left tip and one is a right tip.
\end{enumerate}
\end{corollary}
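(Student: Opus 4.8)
The plan is to analyze the intersection $\Pa_1 \cap \Pa_2$ using the fact that each precell embeds in $\Xa$ (Corollary~\ref{cor:cell embeded}) and that words in the positive monoid $DA_n^+$ are rigid in the sense of Lemma~\ref{lem:words in the positive monoid}. The key local observation is that every vertex of $\Xa$ has a well-defined ``in-degree'' and ``out-degree'' with respect to each label $a$, $b$, inherited from the oriented labelled $1$--skeleton; and along either half of a precell boundary $\partial\Pa$ the edge orientations are coherent, so a precell is determined by a directed labelled path of length $n$ (its upper half, say) together with its terminal vertices $l, r$.

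First I would show that $\Pa_1 \cap \Pa_2$, if nonempty, is contained in a single half of each precell. Suppose a vertex $v$ of the intersection is an interior vertex of $\partial\Pa_1$ (not a tip). The two edges of $\partial\Pa_1$ at $v$ lying in the same half are coherently oriented and carry alternating labels; examining how these edges sit in $\Pa_2$ and using that the $2$--cell $C$ is attached along the single relator, one sees that $v$ cannot simultaneously be an interior vertex of both halves of $\partial\Pa_2$ unless $\Pa_1 = \Pa_2$. The point is that knowing a length-$2$ subword of the boundary word at an interior vertex, together with the orientations, pins down which half of which precell one is on, and by Lemma~\ref{lem:words in the positive monoid}(2) the only ambiguity is the global swap $\underbrace{aba\cdots}_n \leftrightarrow \underbrace{bab\cdots}_n$, which corresponds to the two halves of one precell. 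Then I would establish connectedness: if $\Pa_1 \cap \Pa_2$ were disconnected, one could concatenate boundary arcs of $\Pa_1$ and $\Pa_2$ between two components to produce, in $\Xa$, two distinct positive words of length $\le n$ representing the same element of $DA_n^+$ (lifting via Theorem~\ref{thm:injective artin monoid}), contradicting Lemma~\ref{lem:words in the positive monoid}(2) unless the two arcs already coincide — i.e.\ the intersection was connected after all. This simultaneously handles parts (1) and (2): the intersection is a connected subarc of a half of $\partial\Pa_1$, and it must be a \emph{proper} subarc, since if it were all of the upper (or lower) half then that half of $\Pa_1$ would equal a half of $\Pa_2$, forcing $\Pa_1 = \Pa_2$ by the rigidity of the attaching map (the full boundary word, hence the precell, is determined by either half).

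For part (3), assume $\Pa_1 \cap \Pa_2$ contains an edge; by the above it is a connected subarc $\alpha$ of a half, say the upper half, of $\partial\Pa_1$, and likewise of a half of $\partial\Pa_2$. I claim each endpoint of $\alpha$ is a tip of $\Pa_1$ or of $\Pa_2$. Indeed, suppose an endpoint $v$ of $\alpha$ is an interior vertex of $\partial\Pa_1$. Then $\partial\Pa_1$ continues past $v$ along an edge $e$ (with the prescribed label and orientation). Since $v$ lies in the closed subcomplex $\Pa_2$ and the edge of $\alpha$ at $v$ lies in $\Pa_2$, an in-degree/out-degree count at $v$ — there are only finitely many $a$-edges and $b$-edges at $v$ inside the union of the two precells, and their orientations are constrained — shows $e$ must also lie in $\Pa_2$, contradicting that $v$ is an endpoint of $\alpha$. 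Hence $v$ is a tip of $\Pa_1$ or of $\Pa_2$; by symmetry, and because $\alpha$ lies in \emph{one} half of each precell whose endpoints are the tips $l, r$, a short case analysis forces one endpoint of $\alpha$ to be a tip of $\Pa_1$ and the other a tip of $\Pa_2$, and moreover that of $\Pa_1$ and that of $\Pa_2$ are of opposite handedness (one left, one right): if both were, say, left tips, then the two upper-half words would agree on an initial segment of positive length but emanate from the same vertex with the same first letters, again contradicting injectivity of $DA_n^+ \to DA_n$ together with Lemma~\ref{lem:words in the positive monoid}.

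The main obstacle I anticipate is the bookkeeping in part (3): carefully ruling out the degenerate configurations where $\alpha$ bends from one half of $\Pa_2$ into the other, or where a tip of $\Pa_1$ coincides with an interior vertex of $\Pa_2$ in a way that still produces a valid directed labelled path. Handling this cleanly requires keeping track of the local orientation data at each vertex and invoking Lemma~\ref{lem:words in the positive monoid}(2) to exclude every ``fake'' coincidence; once the combinatorial model (precell $=$ directed labelled path of length $n$ with two tips) is set up, each of these is a finite check, but the checks must be organized so as not to miss a case.
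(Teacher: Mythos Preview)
Your approach relies on the same essential tools as the paper --- Lemma~\ref{lem:words in the positive monoid} and Theorem~\ref{thm:injective artin monoid}, together with the observation that labels alternate along each half of $\partial\Pa$ so that at an interior vertex the ``next edge'' of the boundary is uniquely determined by the previous one. In particular, your extension argument for (3) is exactly what the paper compresses into the phrase ``by looking at the labels of edges around $\partial\Pa_1$ and $\partial\Pa_2$''. The main difference is organizational. The paper first disposes of the case where $\Pa_1\cap\Pa_2$ is discrete, showing directly via several positive-word comparisons (same half vs.\ different halves of each boundary, matching vs.\ opposing orientations) that the intersection is then at most a single point, so (1)--(3) are trivial. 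It then treats the edge case: first verifying (2) and (3) for the connected component $P$ containing an edge via the label/extension argument, and finally proving (1) by showing that the glued space $\Pa_1\cup_P\Pa_2$ embeds in $\Xa$.

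Your plan instead tries to establish (2) globally before (1), and this step is where the sketch has a genuine gap. The argument you give for ``contained in a single half'' is phrased as a statement about a single vertex $v$ --- that it ``cannot simultaneously be an interior vertex of both halves of $\partial\Pa_2$'' --- but this is automatic (only tips lie in both halves) and does not address what you need, namely that two \emph{distinct} points of the intersection cannot lie in different halves of $\partial\Pa_1$. In the discrete case that statement is exactly the content of the paper's multi-case positive-word comparison and cannot be read off from local edge data at one vertex; in the edge case it follows from the extension argument, but only after one has isolated a connected component containing an edge. The paper's discrete/edge split is what makes the proof go through cleanly: it avoids having to prove (2) before connectedness is available. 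If you reorganize along those lines, your remaining ideas (the positive-word contradiction for disconnection, the extension argument for the endpoints in (3), and the rigidity argument for proper containment) all go through and match the paper's proof.
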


\begin{proof}
First we look at the case when $\Pa_1\cap\Pa_2$ is discrete. Suppose by contradiction that there are two distinct vertices $v_1,v_2$ in $\Pa_1\cap\Pa_2$. 

If $v_1$ and $v_2$ are in the same half of $\partial\Pa_1$ and $\partial\Pa_2$ then, for $i=1,2$, let $p_i$ be the segment joining $v_1$ and $v_2$ inside a half of $\partial\Pa_i$. If both $p_1$ and $p_2$ are oriented from $v_1$ to $v_2$, then they give two words in the free monoid that are equal in $DA_n$. By Theorem~\ref{thm:injective artin monoid} and Lemma~\ref{lem:words in the positive monoid}, these two words have to be in the two situations indicated in Lemma~\ref{lem:words in the positive monoid} (2), however, both situations can be ruled out easily. If $p_1$ is oriented from $v_1$ to $v_2$ and $p_2$ is oriented from $v_2$ and $v_1$, then the concatenation of $p_1$ and $p_2$ gives a nontrivial word in the free monoid, which is also nontrivial in $DA_n$ by Theorem \ref{thm:injective artin monoid}. This contradicts the fact that the concatenation is a loop. Other cases of orientations of $p_1$ and $p_2$ can be dealt in a similar way.

If $v_1$ and $v_2$ are in different halves of $\partial\Pa_1$ and $\partial\Pa_2$ then we assume without loss of generality that orientations of halves of $\Pa_1$ and $\Pa_2$ are as in Figure~\ref{fig:intersect} ($l_i$ and $r_i$ are the tips of $\partial\Pi_i$). We also assume without loss of generality that the summation of the length of the path $\overline{l_2v_1r_1}$ and the path $\overline{l_2v_2r_1}$ is $\leqslant 2n$. Let $w_1$ (resp.\ $w_2$) be the word in the free monoid given by $\overline{l_2v_1r_1}$ (resp.\ $\overline{l_2v_2r_1}$). Then at least one of $w_1$ and $w_2$ has length $\leqslant n$. Again, $w_1$ and $w_2$ are in the two situations of Lemma \ref{lem:words in the positive monoid} (2), and both situations can be ruled out easily.

\begin{figure}[ht!]
	\centering
	\includegraphics[scale=0.75]{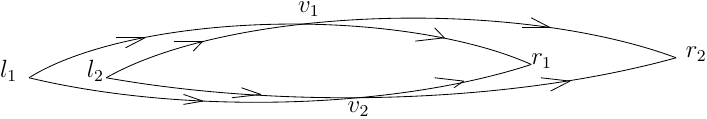}
	\caption{}
	\label{fig:intersect}
\end{figure}

The case where $v_1$ and $v_2$ are in different halves of one of $\partial\Pa_1$ and $\partial\Pa_2$, and are in the same half of the other can be handled in a similar way.

Now we assume $\Pa_1\cap\Pa_2$ contains an edge $f$. Let $P$ be the connected component of $\Pa_1\cap\Pa_2$ that contains this edge. By looking at the labels of edges around $\partial\Pa_1$ and $\partial\Pa_2$, we deduce that either $P=\partial\Pa_1=\partial\Pa_2$, or $P$ satisfies conditions (2) and (3) in Corollary~\ref{cor:connected intersection}. However, the first case is impossible since that will imply $\Pa_1=\Pa_2$. Let $C$ be the space obtained by gluing $\Pa_1$ and $\Pa_2$ along $P$. Then there is a natural map $C\to X^{\ast}$. It suffices to show this map is an embedding. We assume without loss of generality that $\Pa_1$ and $\Pa_2$ are positioned as in Figure \ref{fig:intersect1}, here $l_i$ and $r_i$ are the tips of $\Pa_i$, and $w_1$ (resp.\ $w_2$) is an interior vertex of the upper half of $\Pa_1$ (resp.\ lower half of $\Pa_2$).
\begin{figure}[h!]
	\centering
	\includegraphics[scale=0.75]{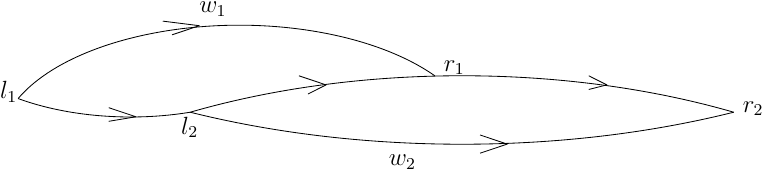}
	\caption{}
	\label{fig:intersect1}
\end{figure}

We already know that $\partial\Pa_1$ and $\partial\Pa_2$ are embedded by Corollary \ref{cor:cell embeded}, and the paths $\overline{l_1l_2r_1r_2}$, $\overline{l_1w_1r_1r_2}$ and $\overline{l_1l_2w_2r_2}$ are embedded because they correspond to words in the free monoid. If $w_1$ and $w_2$ are identified in $X^{\ast}$, then $\overline{l_1w_1}$ and $\overline{l_1l_2w_2}$ give two words in the free monoid which are equal in $DA_n$. By Theorem \ref{thm:injective artin monoid} and Lemma \ref{lem:words in the positive monoid} (2), these two words are identical (note that the length of $\overline{l_1w_1}$ is $< n$), which is a contradiction. 
\end{proof}

\begin{corollary}
	\label{cor:disjoint}
Suppose there are three precells $\Pa_1$, $\Pa_2$ and $\Pa_3$ such that $\Pa_1\cap \Pa_2$ is a nontrivial path $P_1$ in the upper half of $\Pa_2$, and $\Pa_3\cap \Pa_2$ is a nontrivial path $P_3$ in the lower half of $\Pa_2$. Then $\Pa_1\cap \Pa_3$ is either empty or one point.
\end{corollary}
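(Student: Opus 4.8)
The plan is to argue by contradiction, assuming $\Pa_1\cap\Pa_3$ is nonempty and not a single point. By Corollary~\ref{cor:connected intersection}(1) it is connected, and a connected subcomplex that is not a single vertex contains an edge, so $\Pa_1\cap\Pa_3$ contains an edge. (The hypotheses force $\Pa_1,\Pa_2,\Pa_3$ pairwise distinct: $\Pa_1\ne\Pa_2$ and $\Pa_2\ne\Pa_3$ since $P_1,P_3$ are proper subpaths of halves of $\partial\Pa_2$, and $\Pa_1\ne\Pa_3$ since otherwise $P_1=\Pa_1\cap\Pa_2=\Pa_3\cap\Pa_2=P_3$ would lie in both halves of $\partial\Pa_2$.) Write $l_i,r_i$ for the tips of $\Pa_i$, and $U_2,L_2$ for the upper and lower halves of $\partial\Pa_2$, each oriented from $l_2$ to $r_2$, so $U_2\cap L_2=\{l_2,r_2\}$. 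Since $P_1$ is nontrivial, Corollary~\ref{cor:connected intersection}(3) applies to $(\Pa_1,\Pa_2)$; using the left--right symmetry of $\Pa_2$ and the upper--lower symmetry of $\Pa_1$ I may assume $P_1$ joins $l_2$ to $r_1$. Then $P_1$ is an initial subpath of $U_2$ and, by Corollary~\ref{cor:connected intersection}(2), a \emph{proper} terminal subpath of the upper half $U_1$ of $\partial\Pa_1$, of some length $k_1\leq n-1$. By Corollary~\ref{cor:connected intersection}(3) applied to $(\Pa_2,\Pa_3)$ there are two cases, according to whether the tip of $\Pa_2$ occurring as an endpoint of $P_3$ is $l_2$ or $r_2$.

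\emph{Case 1: $P_3$ joins $l_2$ to $r_3$.} Then $P_3$ is likewise a proper terminal subpath of a half of $\partial\Pa_3$ of some length $k_3\leq n-1$, and $l_2\in P_1\cap P_3\subseteq\Pa_1\cap\Pa_3=:Q$. The key point is that $l_2$ is not a tip of $\Pa_1$ nor of $\Pa_3$, being the non-tip endpoint of the proper subpaths $P_1\subsetneq U_1$ and $P_3\subsetneq(\text{a half of }\partial\Pa_3)$. Since the endpoints of the arc $Q$ are tips of $\Pa_1$ and $\Pa_3$ by Corollary~\ref{cor:connected intersection}(3), it follows that $l_2$ is an \emph{interior} vertex of $Q$; hence $Q$ contains both edges of $\partial\Pa_1$ incident to $l_2$, in particular the edge $e$ of $P_1$ incident to $l_2$. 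But then $e\subseteq P_1\subseteq\Pa_2$ and $e\subseteq Q\subseteq\Pa_3$, so $e\subseteq\Pa_2\cap\Pa_3=P_3\subseteq L_2$, whereas also $e\subseteq P_1\subseteq U_2$ --- impossible, since $U_2\cap L_2$ contains no edge.

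\emph{Case 2: $P_3$ joins $l_3$ to $r_2$.} Here $P_1\cap P_3\subseteq U_2\cap L_2=\{l_2,r_2\}$, and since $P_1\subsetneq U_2$ and $P_3\subsetneq L_2$ we have $r_2\notin P_1$ and $l_2\notin P_3$, so $P_1\cap P_3=\emptyset$; thus $Q:=\Pa_1\cap\Pa_3$ is disjoint from $\Pa_2$. By Corollary~\ref{cor:connected intersection}(3) the endpoints of $Q$ are a tip of $\Pa_1$ and a tip of $\Pa_3$; as $r_1\in P_1\subseteq\Pa_2$ and $l_3\in P_3\subseteq\Pa_2$ cannot lie on $Q$, these endpoints must be $l_1$ and $r_3$. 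Now form the closed edge-path in $\Xa$ running $r_1\to r_2$ along $U_2$, then $r_2\to r_3$ along the half of $\partial\Pa_3$ containing $P_3$, then $r_3\to l_1$ along $Q$, and finally $l_1\to r_1$ along the half of $\partial\Pa_1$ \emph{not} containing $Q$. The first, second and fourth segments run in the positive direction of the corresponding half and the third runs backwards, so, read starting at $l_1$, the loop spells $w_4w_1w_2w_Q^{-1}$, where $w_1,w_2,w_4,w_Q$ are positive words of lengths $n-k_1,\ n-k_3,\ n,\ |Q|$ respectively. As $\Xa$ is simply connected the loop is contractible, so $w_4w_1w_2=w_Q$ in $DA_n$, hence in $DA^{+}_n$ by Theorem~\ref{thm:injective artin monoid}; by Lemma~\ref{lem:words in the positive monoid}(1) these positive words then have equal length. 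But $|w_4w_1w_2|=3n-k_1-k_3\geq n+2$, while $|w_Q|=|Q|\leq n-1$ because $Q$ is a proper subpath of a half of $\partial\Pa_1$ --- a contradiction.

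The main obstacle is Case 2: the closing loop has to be chosen so that every segment other than $Q$ is positively oriented, which is exactly what turns the relation it produces into an \emph{equation between two positive words}, the only situation in which Lemma~\ref{lem:words in the positive monoid} and Theorem~\ref{thm:injective artin monoid} can be brought to bear; and this requires first pinning down, via Corollary~\ref{cor:connected intersection}(3) together with $Q\cap\Pa_2=\emptyset$, that the tips of $\Pa_1$ and $\Pa_3$ appearing as endpoints of $Q$ are $l_1$ and $r_3$ (and not $r_1$ or $l_3$). Case~1, by contrast, is a short direct argument once one observes that $l_2$ lies in the interior of $Q$.
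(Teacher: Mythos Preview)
Your proof is correct, and the case split matches the paper's: after symmetry reductions, both arguments treat the ``shared tip'' configuration (your Case~1, the paper's right-hand figure) and the ``opposite tips'' configuration (your Case~2, the paper's left-hand figure).

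In Case~1 the two arguments are essentially identical: the paper observes that by inspecting edge labels around $l_2$ this vertex is isolated in $\partial\Pa_1\cap\partial\Pa_3$, whence by Corollary~\ref{cor:connected intersection} the intersection equals $\{l_2\}$; your version reaches the same contradiction by noting that, were $Q$ to contain an edge, $l_2$ would have to be an interior vertex of $Q$, forcing an edge of $P_1\subseteq U_2$ to lie in $P_3\subseteq L_2$.

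In Case~2 your route is genuinely different and more streamlined. The paper proceeds by a direct piecewise disjointness check: it decomposes $\partial\Pa_1$ and $\partial\Pa_3$ into subarcs (such as $\overline{l_1l_2}$, $\overline{l_2r_1}$, $\overline{l_1v_1r_1}$, and their analogues on $\partial\Pa_3$) and shows each relevant pair is disjoint by exhibiting suitable positively oriented paths and invoking Theorem~\ref{thm:injective artin monoid} and Lemma~\ref{lem:words in the positive monoid} several times. You instead leverage Corollary~\ref{cor:connected intersection}(3) structurally: once you know $Q\cap\Pa_2=\emptyset$, the tips $r_1$ and $l_3$ are excluded and the endpoints of $Q$ are forced to be $l_1$ and $r_3$; a single closed loop then yields one positive-word identity $w_4w_1w_2=w_Q$, and a length count ($3n-k_1-k_3\geq n+2>n-1\geq|Q|$) finishes. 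This buys you a shorter argument with a single application of the monoid injectivity, at the cost of relying more heavily on Corollary~\ref{cor:connected intersection}(3). The paper's approach, in turn, is more self-contained and makes the geometric picture of the embedded union $C\to X^\ast$ explicit.
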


\begin{proof}
We glue $\Pa_2$ and $\Pa_1$ along $P_1$, and glue $\Pa_2$ and $\Pa_3$ along $P_3$ to obtain a space $C$. There is a natural map $C\to X^{\ast}$. By Corollary \ref{cor:cell embeded} (2) and (3), there are four possibilities of the space $C$, we only consider the two cases in Figure \ref{fig:intersect2}, the other cases are similar. Let $l_i$ and $r_i$ be the left tip and right tip of $\Pa_i$.

\begin{figure}[h!]
	\centering
	\includegraphics[scale=0.75]{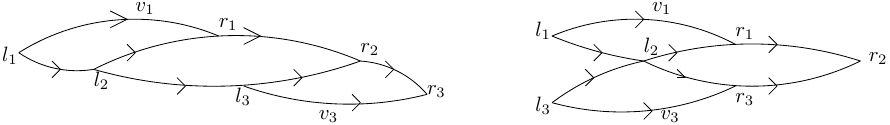}
	\caption{}
	\label{fig:intersect2}
\end{figure}

Suppose we are in the case as in Figure \ref{fig:intersect2}, on the left (namely the case where $l_2\in\Pa_1$ and $r_2\in\Pa_3$). We claim $\partial\Pa_1\cap\partial\Pa_3=\emptyset$. Since $\Pa_2\cup_{P_3}\Pa_3$ is embedded in $X^{\ast}$ by Corollary \ref{cor:cell embeded}, the path $\overline{l_2r_1}$ is disjoint from $\partial\Pa_3$. Similarly, $\overline{l_3r_2}$ is disjoint from $\partial\Pa_1$. Moreover, $\overline{l_1l_2}$ is disjoint from $\overline{r_2r_3}$ and $\overline{l_3v_3r_3}$, since $\overline{l_1l_2l_3v_3r_3}$ and $\overline{l_1l_2r_1r_2r_3}$ give words in the free monoid. 
Similarly $\overline{r_2r_3}$ is disjoint from $\overline{l_1v_1r_1}$.
It remains to show $\overline{l_1v_1r_1}\cap\overline{l_3v_3r_3}=\emptyset$. If this is not true, we assume without loss of generality that $v_1$ and $v_3$ are identified. Then $\overline{l_1v_1}$ and $\overline{l_1l_2l_3v_3}$ give two words in the free monoid which are equal in $DA_n$. Since $\overline{l_1v_1}$ has length $<n$, these words are identical by Theorem \ref{thm:injective artin monoid} and Lemma \ref{lem:words in the positive monoid}, which yields a contradiction. 

Suppose we are in the case of Figure \ref{fig:intersect2} right (namely $l_2\in\Pa_1\cap\Pa_3$). By looking at the labels of edges around vertex $l_2$, we know $l_2$ is an isolated vertex in $\partial\Pa_1\cap\partial\Pa_3$. Thus $\partial\Pa_1\cap\partial\Pa_3=l_2$ by Corollary \ref{cor:cell embeded}.
\end{proof}

\subsection{Subdividing and systolizing the presentation complex}
\label{subsec:subividing and adding new edges}
We subdivide each precell in $X^{\ast}$ as in Figure \ref{f:cell} to obtain a simplicial complex $\Xb$. 
In particular, in the case $n=2$ we do not add any new vertices only an edge $\ov{lr}$. 
A \emph{cell} of $\Xb$ is defined to be a subdivided precell, and we use the symbol $\Pi$ for denoting a cell. 
The original vertices of $X^{\ast}$ in $\Xb$ are called the \emph{real vertices}, and the new vertices of $\Xb$ after subdivision are called \emph{interior vertices}. Interior vertices in a cell $\Pi$ are denoted $c_1,c_2,\ldots,c_{n-2}$ as in Figure \ref{f:cell}. (Here and further we use the convention that the real vertices are drawn as solid points and the interior vertices as circles.)

\begin{figure}[ht!]
	\centering
	\includegraphics[width=1\textwidth]{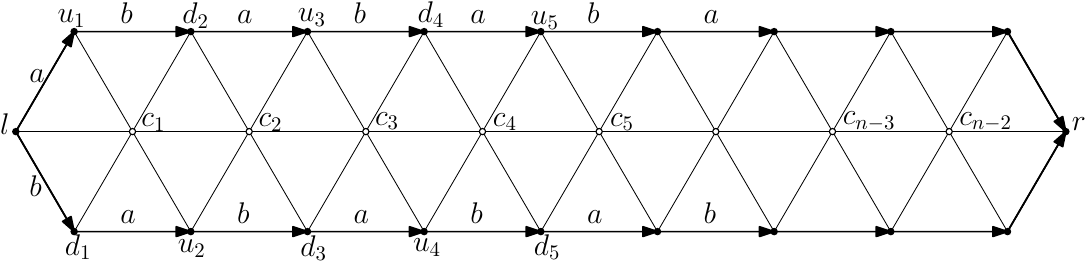}
	\caption{A cell $\Pi$ subdivided into several smaller triangles.}
	\label{f:cell}
\end{figure}

If $n=2$ then $\Xb$ is systolic---it is isomorphic to the equilateral triangulation of the Euclidean plane (see Remark~\ref{rem:n2} below)---and we define $X$ to be $\Xb$. Form now on we assume $n\geqslant 3$. Note that then $\Xb$ is not systolic. Suppose $\Pi_1$ and $\Pi_2$ are two cells such that $\Pi_1\cap\Pi_2$ is a path made of $\geqslant 2$ edges. Then they create $4$-cycles or $5$-cycles in $\Xb$ without diagonals, see the thick cycles in Figure \ref{f:cell2}. In what follows we modify $\Xb$ to obtain a systolic complex $X$. 
A rough idea is to add appropriate diagonals to these $4$-cycles or $5$-cycles. We only add new edges between interior vertices. 

\begin{figure}[h!]
	\centering
	\includegraphics[width=1\textwidth]{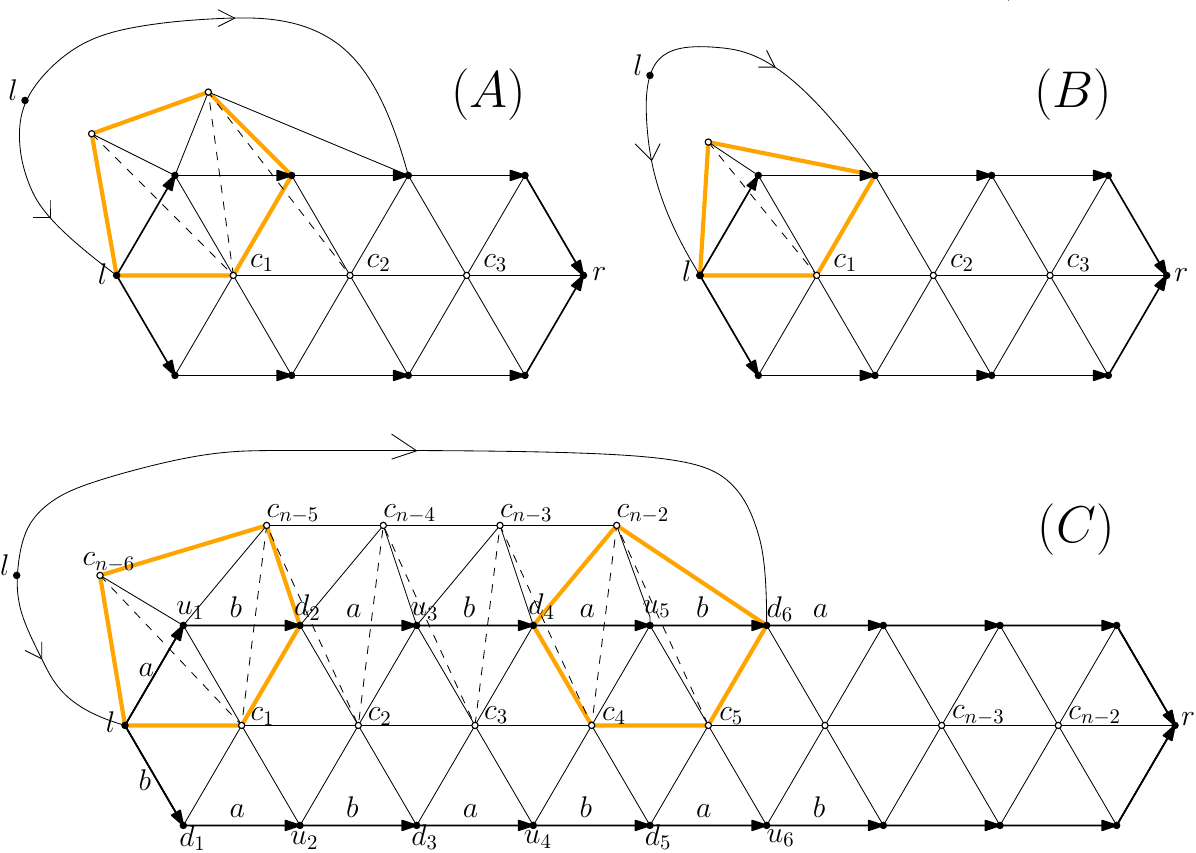}
	\caption{}
	\label{f:cell2}
\end{figure}

\begin{example}
For the $5$-cycle $lc_{n-6}c_{n-5}d_2c_1$ in Figure \ref{f:cell2} (C), there is a unique way to add diagonals between interior vertices, namely we add edges $\overline{c_{n-6}c_1}$ and $\overline{c_{n-5}c_1}$. Similarly, we add edges $\overline{c_{n-2}c_4}$ and $\overline{c_{n-2}c_5}$. However, adding these edges creates new $5$-cycles (e.g.\ $c_1c_{n-5}c_{n-4}u_3c_2$). One can either connect $c_1$ and $c_{n-4}$, or $c_{n-5}$ and $c_2$. We choose the latter and add new edges in a zigzag pattern indicated in Figure \ref{f:cell2} (A) and (C) (see the dashed edges). After adding these edges, we fill in higher dimensional simplexes to obtain a string of $3$-dimensional simplexes, starting from $c_{n-6}u_1c_1l$, and ending at $c_{n-2}d_6c_5u_5$.
\end{example}

Now we give a precise description of the new edges added to $\Xb$. Let $\Lambda$ be the collection of all unordered pairs of cells of $\Xb$ such that their intersection contains at least two edges. Then $DA_n$ acts on $\Lambda$. This action is free. To see this, pick a pair $(\Pi_1,\Pi_2)\in\Lambda$ and suppose $\alpha\in DA_n$ stabilizes it. In particular, $\alpha$ maps $\partial\Pi_1\cap\partial\Pi_2$ to itself. However, $\partial\Pi_1\cap\partial\Pi_2$ is an interval by Corollary \ref{cor:connected intersection}. Thus $\alpha$ fixes a point in $\partial\Pi_1\cap\partial\Pi_2$. Since $DA_n\curvearrowright \Xb$ is free, $\alpha$ is the identity.

Pick a base cell $\Pi$ in $\Xb$ such that $l\in\Pi$ coincides with the identity element of $DA_n$. Let $\Lambda_0$ be the collection of pairs of the form $(\Pi, \ui{i}\Pi)$, $(\Pi, \di{i}\Pi)$ for $i=1,\ldots,n-2$ (here each vertex of $\Pi$ can be identified as an element of $DA_n$, and $\ui{i}\Pi$ means the image of $\Pi$ under the action of $\ui{i}$). Note that
\begin{enumerate}
	\item $\Lambda_0\subset\Lambda$;
	\item different elements in $\Lambda_0$ are in different $DA_n$-orbits;
	\item every $DA_n$-orbit in $\Lambda$ contains an element from $\Lambda_0$.
\end{enumerate}
(1) and (2) follow by direct computation. Pick a pair $(\Pi_1,\Pi_2)\in\Lambda$, by Corollary \ref{cor:connected intersection} (3), one endpoint of $\Pi_1\cap \Pi_2$ is the left tip of $\Pi_1$ or $\Pi_2$, say $\Pi_1$. Let $\alpha\in DA_n$ be the element represented by such left tip. Then $\alpha^{-1}(\Pi_1,\Pi_2)\in\Lambda_0$.

\begin{definition}[Constructing $X$ from $\Xb$]
	\label{def:construct X}
For the pair $(\Pi, \ui{i}\Pi)$, we add an edge between $c_j\in\Pi$ and $\ui{i}c_{j+i}\in\ui{i}\Pi$ for $j=1,\ldots, n-2-i$, and add an edge between $c_j$ and $\ui{i}c_{j+i-1}$ for $j=1,\ldots, n-1-i$, see Figure \ref{fig:2}. For the pair $(\Pi, \di{i}\Pi)$, we add an edge between $c_j$ and $ \di{i}c_{j+i}$ for $j=1,\ldots, n-2-i$, and add an edge between $c_j$ and $\di{i}c_{j+i-1}$ for $j=1,\ldots, n-1-i$, see Figure \ref{fig:1}. Note that the new edges between two cells form a zigzag pattern.

Given a pair of cells $(\Pi_1,\Pi_2)\in \Lambda$, there is a unique element $\alpha\in A$ such that $\alpha^{-1}(\Pi_1,\Pi_2)\in \Lambda_0$. Thus edges between $(\Pi_1,\Pi_2)$ are defined to be the $\alpha$-image of edges between $\alpha^{-1}(\Pi_1,\Pi_2)$. Let $X'$ be the complex obtained by adding all the new edges and let $X$ be the flag completion of $X'$, i.e.\ $X$ is a flag simplicial complex which has the same $1$-skeleton as $X'$. There is a simplicial action $DA_n\curvearrowright X$.
\end{definition}

\begin{figure}[h!]
	\begin{center}
		\begin{tikzpicture}
		\node at (1.5,-2.3) {$\ui{i}c_{i}$};
		\draw [thick] (1.5,-2) -- (3,0);
		\node at (3,0.3) {$c_{1}$};
		\draw [thick] (3,0) -- (3,-2);
		\node at (3,-2.3) {$\ui{i}c_{i+1}$};
		\draw [thick] (3,-2) -- (4.5,0);
		\node at (4.5,0.3) {$c_{2}$};
		\draw [thick] (4.5,0) -- (4.5,-2);
		\node at (4.5,-2.3) {$\ui{i}c_{i+2}$};
		\draw [thick] (4.5,-2) -- (6,0);
		\node at (6,0.3) {$c_{3}$};
		\draw [thick] (6,0) -- (6,-2);
		\node at (6,-2.3) {$\ui{i}c_{i+3}$};
		\draw [dotted, thick] (6.5,-1) -- (7.5,-1);
		\node at (8,0.3) {$c_{n-3-i}$};
		\draw [thick] (8,0) -- (8,-2);
		\node at (8,-2.3) {$\ui{i}c_{n-3}$};
		\draw [thick] (8,-2) -- (9.5,0);
		\node at (9.5,0.3) {$c_{n-2-i}$};
		\draw [thick] (9.5,0) -- (9.5,-2);
		\node at (9.5,-2.3) {$\ui{i}c_{n-2}$};
		\draw [thick] (9.5,-2) -- (11,0);
		\node at (11,0.3) {$c_{n-1-i}$};
		\end{tikzpicture}
	\end{center} 
	\caption{}\label{fig:2}
\end{figure}

\begin{figure}[h!]
	\begin{center}
		\begin{tikzpicture}
		\node at (1.5,2.3) {$\di{i}c_{i}$};
		\draw [thick] (1.5,2) -- (3,0);
		\node at (3,-0.3) {$c_{1}$};
		\draw [thick] (3,0) -- (3,2);
		\node at (3,2.3) {$\di{i}c_{i+1}$};
		\draw [thick] (3,2) -- (4.5,0);
		\node at (4.5,-0.3) {$c_{2}$};
		\draw [thick] (4.5,0) -- (4.5,2);
		\node at (4.5,2.3) {$\di{i}c_{i+2}$};
		\draw [thick] (4.5,2) -- (6,0);
		\node at (6,-0.3) {$c_{3}$};
		\draw [thick] (6,0) -- (6,2);
		\node at (6,2.3) {$\di{i}c_{i+3}$};
		\draw [dotted, thick] (6.5,1) -- (7.5,1);
		\node at (8,-0.3) {$c_{n-3-i}$};
		\draw [thick] (8,0) -- (8,2);
		\node at (8,2.3) {$\di{i}c_{n-3}$};
		\draw [thick] (8,2) -- (9.5,0);
		\node at (9.5,-0.3) {$c_{n-2-i}$};
		\draw [thick] (9.5,0) -- (9.5,2);
		\node at (9.5,2.3) {$\di{i}c_{n-2}$};
		\draw [thick] (9.5,2) -- (11,0);
		\node at (11,-0.3) {$c_{n-1-i}$};
		\end{tikzpicture}
	\end{center} 
	\caption{}\label{fig:1}
\end{figure}

For an interior vertex $v$ in a cell, an edge in the boundary of the cell is \emph{facing} $v$ if 
\begin{itemize}
	\item this edge does not contain a tip;
	\item $v$ and this edge span a triangle in the cell. 
\end{itemize}

Observe that $c_k\in \Pi$ has the edges $\overline{u_kd_{k+1}}$ and $\overline{d_ku_{k+1}}$ facing it for $1\leq k \leq n-2$, see Figure~\ref{f:cell}.

\begin{lemma}
	\label{lem:technical lemma0}
Pick an interior vertex $c_k$ in $\Pi$. Then
\begin{enumerate}
	\item $c_k$ is connected to at least one of the interior vertices of $\ui{i}\Pi$ (resp.\ $\di{i}\Pi$) if and only if at least one of the edges in $\partial \Pi$ facing $c_k$ is contained in $\ui{i}\Pi\cap\Pi$ (resp.\ $\di{i}\Pi\cap\Pi$);
	\item if $c_k\in\Pi$ and $\ui{i}c_{k'}\in \ui{i}\Pi$ (resp.\ $\di{i}c_{k'}\in \di{i}\Pi$) are adjacent, then there is a vertex in $\Pi\cap\partial(\ui{i}\Pi)$ (resp.\ $\Pi\cap\partial(\di{i}\Pi)$) that is adjacent to both $c_k$ and $\ui{i}c_{k'}$ (resp.\ $\di{i}c_{k'}$).
\end{enumerate}
\end{lemma}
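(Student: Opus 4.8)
\emph{Setup and two observations.} The plan is to read both statements off the explicit list of edges added to the pair $(\Pi,\ui{i}\Pi)$ in Figure~\ref{fig:2}, together with two elementary observations; throughout $1\le i\le n-2$ (the range of $\Lambda_0$), and I write $u_j=\underbrace{aba\cdots}_j$, $d_j=\underbrace{bab\cdots}_j$ for the boundary vertices of $\Pi$, with $u_0=d_0=l$, $u_n=d_n=r$. \emph{Triangulation observation}: reading Figure~\ref{f:cell}, in $\Pi$ the interior vertex $c_k$ is adjacent precisely to $c_{k-1},u_k,u_{k+1},c_{k+1},d_{k+1},d_k$ (conventions $c_0=l$, $c_{n-1}=r$), so the edges of $\partial\Pi$ facing $c_k$ are $u_ku_{k+1}$ and $d_kd_{k+1}$, plus $u_0u_1,d_0d_1$ when $k=1$ and $u_{n-1}u_n,d_{n-1}d_n$ when $k=n-2$. \emph{Reduction observation}: an elementary computation in $DA_n$ gives $\ui{i}u_j=u_{j-i}$ for $i$ even and $\ui{i}u_j=d_{j-i}$ for $i$ odd, whenever $i\le j\le n$; hence $\{u_0,\ldots,u_{n-i}\}\subseteq\Pi\cap\ui{i}\Pi$ (if $i$ even) or $\{d_0,\ldots,d_{n-i}\}\subseteq\Pi\cap\ui{i}\Pi$ (if $i$ odd), and since by Corollary~\ref{cor:connected intersection} this intersection is an interval contained in one half of $\partial\Pi$, one of whose endpoints is $l$ and the other the right tip $\ui{i}r$ of $\ui{i}\Pi$, it equals exactly that $l$-based subpath of length $n-i$, lying on the upper half if $i$ is even and on the lower half if $i$ is odd.

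\emph{Proof of (1).} An interior vertex lies in a unique cell, so every edge of $X$ joining $c_k$ to an interior vertex of $\ui{i}\Pi$ belongs to the list for $(\Pi,\ui{i}\Pi)\in\Lambda_0$, and from Figure~\ref{fig:2} such an edge exists iff $1\le k\le n-1-i$. Using the triangulation observation together with the description of $\Pi\cap\ui{i}\Pi$ above: the facing edge $u_ku_{k+1}$ (resp.\ $d_kd_{k+1}$) lies in $\Pi\cap\ui{i}\Pi$ iff $k+1\le n-i$, i.e.\ $k\le n-1-i$; the extra tip edges at $k=1$ always lie in it; and the extra tip edges at $k=n-2$ do not (one would force $i\le 0$, the other $i\le 1$, which is precisely $k=n-2\le n-1-i$). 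So in every case a facing edge of $c_k$ lies in $\Pi\cap\ui{i}\Pi$ iff $k\le n-1-i$, which is the asserted equivalence. The $\di{i}\Pi$ statement follows verbatim after applying the simplicial automorphism of $\Xb$ that swaps $a$ and $b$ (hence the two halves of each cell).

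\emph{Proof of (2).} Suppose $c_k\sim\ui{i}c_{k'}$; by the same uniqueness this is an edge from Figure~\ref{fig:2}, so $k'\in\{k+i-1,\,k+i\}$ and $1\le k+i\le n-1$. Put $v:=\ui{i}u_{k+i}$. By the reduction observation $v=u_k$ for $i$ even and $v=d_k$ for $i$ odd, and since $1\le k\le n-1-i$ this is a boundary vertex of $\Pi$ lying on the subpath $\Pi\cap\ui{i}\Pi$; thus $v\in\Pi\cap\partial(\ui{i}\Pi)$. By the triangulation observation $c_k$ is adjacent in $\Pi$ to both $u_k$ and $d_k$, so $v\sim c_k$. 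Applying $\ui{i}$ to the triangulation observation, $\ui{i}c_{k'}$ is adjacent in $\ui{i}\Pi$ to $\ui{i}u_{k'}$ and $\ui{i}u_{k'+1}$; since $k+i\in\{k',k'+1\}$, one of these is $v$, so $v\sim\ui{i}c_{k'}$. Hence $v$ is the required common neighbour, and the $\di{i}\Pi$ case follows again by the $a\leftrightarrow b$ symmetry.

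\emph{Main obstacle.} The whole argument is bookkeeping. The only point needing more than inspection of Figures~\ref{f:cell} and~\ref{fig:2} is pinning down $\Pi\cap\ui{i}\Pi$ exactly — that it is the $l$-based subpath of the correct half and of length $n-i$ — where Corollary~\ref{cor:connected intersection} is genuinely used to exclude a longer intersection or one on the wrong half. I expect no conceptual difficulty beyond tracking the parity of $i$ and the conventions $c_0=l$, $c_{n-1}=r$.
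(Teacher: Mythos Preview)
Your proof is correct and follows the same approach as the paper's: identify $\Pi\cap\ui{i}\Pi$ as the $l$--based subpath of length $n-i$ in one half of $\partial\Pi$, and read off both statements from the zigzag pattern of Figure~\ref{fig:2}. Two minor remarks: (i) your convention $u_j=\underbrace{aba\cdots}_j$ differs from the paper's, where $u_j$ denotes the alternating word of length $j$ \emph{ending} in $a$ (so that $u_j$ lies on the upper half only for odd $j$); since the set $\{u_i,d_i\}$ is the same under either convention and you treat the $\di{i}$ case by the $a\leftrightarrow b$ symmetry, this does not affect correctness, but a reader comparing with Figure~\ref{f:cell} may be confused; (ii) the parenthetical about the extra tip edges at $k=n-2$ is slightly garbled (both $u_{n-1}u_n$ and $d_{n-1}d_n$ force $i\le 0$; it is the regular facing edge $u_{n-2}u_{n-1}$ that forces $i\le 1$), though your conclusion there is right.
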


\begin{proof}
We only consider the case of $\Pi$ and $\ui{i}\Pi$. Note that $\Pi\cap\partial(\ui{i}\Pi)$ is a path $\omega$ in the lower half of $\Pi$, starting at $l$ and ending at $u_{n-i}$ (if $n-i$ is odd) or $d_{n-i}$ (if $n-i$ is even). Moreover, both $c_k$ and $\ui{i}c_{i+k}$ are facing the $(k+1)$-th edge of $\omega$ for $1\leqslant k\leqslant n-2-i$, $c_{n-1-i}$ is facing the $(n-i)$-th edge of $\omega$, and $\ui{i}c_i$ is facing the
first edge of $\omega$. Now the lemma follows.
\end{proof}

Recall that for two vertices $v_1$ and $v_2$ in a simplicial complex, we write $v_1\sim v_2$ (resp.\ $v_1\nsim v_2$) to denote that they are connected by an edge (resp.\ are not connected by an edge).

\begin{lemma}\
	\label{lem:technical lemma}
	\begin{enumerate}
		\item Suppose $1\leqslant i<j\leqslant n-2$. Then there are exactly two interior vertices in $\ui{j}\Pi$ connected to $\ui{i}c_i$, which are $\ui{j}c_j$ and $\ui{j}c_{j-1}$. There are exactly two interior vertices in $\ui{i}\Pi$ connected to $\ui{j}c_j$, which are $\ui{i}c_i$ and $\ui{i}c_{i+1}$ (see Figure \ref{fig:0} left). Moreover, $\ui{j}c_{j-1}\sim\ui{i}c_{i-1}$ for $i\geqslant 2$.
		\item Suppose $1\leqslant i<n-1$. Then there is exactly one interior vertex of $\ui{n-1}\Pi$ connected to $\ui{i}c_i$, which is $\ui{n-1}c_{n-2}$. Moreover, $\ui{n-1}c_{n-2}\sim\ui{i}c_{i-1}$ for $i\geqslant 2$.
		\item Suppose $1\leqslant i<j\leqslant n-2$. Then there are exactly two interior vertices in $\di{j}\Pi$ connected to $\di{i}c_i$, which are $\di{j}c_j$ and $\di{j}c_{j-1}$. There are exactly two interior vertices in $\di{i}\Pi$ connected to $\di{j}c_j$, which are $\di{i}c_i$ and $\di{i}c_{i+1}$ (see Figure \ref{fig:0} right).  Moreover, $\di{j}c_{j-1}\sim\di{i}c_{i-1}$ for $i\geqslant 2$.
		\item Suppose $1\leqslant i<n-1$. Then there is exactly one interior vertex of $\di{n-1}\Pi$ connected to $\di{i}c_i$, which is $\di{n-1}c_{n-2}$. Moreover, $\di{n-1}c_{n-2}\sim\di{i}c_{i-1}$ for $i\geqslant 2$.
	\end{enumerate}
\end{lemma}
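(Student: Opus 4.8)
These four statements are variants of a single computation, which I would organize as follows.

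First I would dispose of (3) and (4). The automorphism of $DA_n$ exchanging $a$ and $b$ fixes the base cell $\Pi$ (it interchanges its two halves and fixes each interior vertex $c_k$), interchanges the cells $\ui{i}\Pi$ with the cells $\di{i}\Pi$, and carries the new edges attached to a pair $(\Pi,\ui{i}\Pi)$ onto those attached to $(\Pi,\di{i}\Pi)$ (compare Figures~\ref{fig:2} and~\ref{fig:1}). Hence (3) and (4) follow from (1) and (2) with no extra work, and since (2) is just the endpoint case $j=n-1$ of (1), the real content is a single assertion about the two cells $\ui{i}\Pi$, $\ui{j}\Pi$ and the new edges running between their interior vertices.

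The plan is to reduce everything to the explicit description of the new edges attached to a pair in $\Lambda_0$, after transporting $\{\ui{i}\Pi,\ui{j}\Pi\}$ into $\Lambda_0$ by a group element. The crux is therefore the geometric claim: for $1\leqslant i<j\leqslant n-1$, the intersection $\ui{i}\Pi\cap\ui{j}\Pi$ is an edge-path of length $n-(j-i)$ — in particular $\geqslant 2$ in the relevant range $j-i\leqslant n-2$ — one endpoint of which is the left tip of $\ui{i}\Pi$; consequently $\{\ui{i}\Pi,\ui{j}\Pi\}\in\Lambda$, and applying the group element $u_i$ sends this pair to a pair $\{\Pi,\,u_iu_j^{-1}\Pi\}$ whose intersection with $\Pi$ is a path of length $n-(j-i)$ issuing from $l$, so that $u_iu_j^{-1}\Pi$ is one of $\ui{j-i}\Pi$ or $\di{j-i}\Pi$. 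To prove this I would argue as in the proofs of Corollaries~\ref{cor:connected intersection} and~\ref{cor:disjoint}: since $j>i$, the path $\ui{j}\Pi\cap\Pi$ is a subpath of $\ui{i}\Pi\cap\Pi$, hence already lies in $\ui{i}\Pi$, which gives the lower bound $n-j$; Corollary~\ref{cor:connected intersection} forces $\ui{i}\Pi\cap\ui{j}\Pi$ to be a connected boundary path with tips of the two cells as endpoints, which pins down where it runs; and I would use Theorem~\ref{thm:injective artin monoid} with Lemma~\ref{lem:words in the positive monoid} to show it extends exactly $i$ edges past $l$ and no further, for a total length $n-(j-i)$, with the left tip of $\ui{i}\Pi$ as one endpoint. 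I expect this identification — the precise length, the precise endpoints, and hence the correct member of $\Lambda_0$ (which of $\ui{j-i}\Pi$, $\di{j-i}\Pi$ occurs is governed by the parity of $i$) — to be the main obstacle; everything afterwards is formal.

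Granting the claim, write $m=j-i$. Since the construction is $DA_n$-equivariant, the new edges between $\ui{i}\Pi$ and $\ui{j}\Pi$ are the $\ui{i}$-images of the new edges attached to $(\Pi,\ui{m}\Pi)$ (resp. $(\Pi,\di{m}\Pi)$); using $\ui{j}=\ui{i}\ui{m}$ (resp. $\ui{j}=\ui{i}\di{m}$) from the claim, $\ui{i}$ carries an edge $c_p\sim\ui{m}c_q$ to $\ui{i}c_p\sim\ui{j}c_q$, and since Figures~\ref{fig:2} and~\ref{fig:1} have the same index pattern, these edges join $\ui{i}c_p$ to $\ui{j}c_{p+m}$ for $1\leqslant p\leqslant n-2-m$ and to $\ui{j}c_{p+m-1}$ for $1\leqslant p\leqslant n-1-m$; there are no other edges between interior vertices of the two cells, since each interior vertex belongs to a unique cell and so such an edge can only come from the pair $\{\ui{i}\Pi,\ui{j}\Pi\}$. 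Statement (1) is now bookkeeping: taking $p=i$, the first family gives $\ui{i}c_i\sim\ui{j}c_j$ (range: $j\leqslant n-2$) and the second gives $\ui{i}c_i\sim\ui{j}c_{j-1}$ (range: $j\leqslant n-1$), and no other $p$ produces $\ui{i}c_i$; dually, the interior vertices of $\ui{i}\Pi$ adjacent to $\ui{j}c_j$ are $\ui{i}c_i$ (from $p+m=j$) and $\ui{i}c_{i+1}$ (from $p+m-1=j$, valid since $j\leqslant n-2$); and the ``moreover'' clause is the edge with $p=i-1$ in the first family, $\ui{i}c_{i-1}\sim\ui{j}c_{j-1}$, whose range condition $p\geqslant 1$ is precisely $i\geqslant 2$.

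For (2) one runs the same argument with $j=n-1$, so $m=n-1-i\leqslant n-2$ and the claim still applies. Now $p=i$ violates the first family's range $p\leqslant n-2-m=i-1$, so only the second family contributes and yields the single edge $\ui{i}c_i\sim\ui{n-1}c_{n-2}=\ui{j}c_{j-1}$; and the ``moreover'' clause is again the edge with $p=i-1$ (now in the first family, whose range $p\leqslant i-1$ it meets with equality), giving $\ui{i}c_{i-1}\sim\ui{n-1}c_{n-2}=\ui{j}c_{j-1}$. This completes the proof.
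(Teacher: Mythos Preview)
Your approach is essentially the paper's: transport the pair $\{\ui{i}\Pi,\ui{j}\Pi\}$ by $u_i$ into $\Lambda_0$ and read off the edges from the explicit description in Figures~\ref{fig:2}--\ref{fig:1}; your bookkeeping for (1) and (2) and the reduction of (3),(4) to (1),(2) are correct.

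The one place where you diverge is the step you flag as ``the main obstacle'': identifying which member of $\Lambda_0$ the pair lands in. You propose to establish this geometrically, computing the length and endpoints of $\ui{i}\Pi\cap\ui{j}\Pi$ via Corollary~\ref{cor:connected intersection} and the positive-monoid lemmas. The paper bypasses all of this with a one-line word identity in $DA_n$: $u_iu_j^{-1}=d_{j-i}^{-1}$ when $u_i$ lies in the upper half of $\partial\Pi$, and $u_iu_j^{-1}=u_{j-i}^{-1}$ when $u_i$ lies in the lower half (this is just cancellation in the alternating word). So what you call the main obstacle is in fact immediate, and the rest of the argument is, as you say, formal.
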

\begin{figure}[h!]
	\begin{center}
		\begin{tikzpicture}
		\node at (1.5,2.3) {$\ui{j}c_{j-1}$};
		\draw [thick] (1.5,2) -- (3,0);
		\node at (3,-0.3) {$\ui{i}c_{i}$};
		\draw [thick] (3,0) -- (3,2);
		\node at (3,2.3) {$\ui{j}c_{j}$};
		\draw [thick] (3,2) -- (4.5,0);
		\node at (4.5,-0.3) {$\ui{i}c_{i+1}$};
		
		\node at (7.5,2.3) {$\di{j}c_{j-1}$};
		\draw [thick] (7.5,2) -- (9,0);
		\node at (9,-0.3) {$\di{i}c_{i}$};
		\draw [thick] (9,0) -- (9,2);
		\node at (9,2.3) {$\di{j}c_{j}$};
		\draw [thick] (9,2) -- (10.5,0);
		\node at (10.5,-0.3) {$\di{i}c_{i+1}$};
		\end{tikzpicture}
	\end{center} 
	\caption{}\label{fig:0}
\end{figure}
\begin{proof}
	We prove (1) and (2), the other items are similar. For (2) we set $j=n-1$. Note that $u_iu^{-1}_j=d^{-1}_{j-i}$ when $u_i$ is in the upper half of the cell, and $u_iu^{-1}_j=u^{-1}_{j-i}$ when $u_i$ is in the lower half of the cell. Now we consider the case where $u_i$ is in the upper half of the cell, the other case is similar. It follows from the scheme of how we add edges between $\Pi$ and $d^{-1}_{j-i}\Pi$ that
	\begin{enumerate}
		\item when $1\leqslant i<j\leqslant n-2$, the only two interior vertices in $\di{j-i}\Pi$ connected to $c_i$ are $d^{-1}_{j-i}c_j$ and $d^{-1}_{j-i}c_{j-1}$; and the only two interior vertices in $\Pi$ connected to $d^{-1}_{j-i}c_j$ are $c_i$ and $c_{i+1}$; moreover, $c_{i-1}\sim d^{-1}_{j-i}c_{j-1}$ for $i\geqslant 2$;
		\item $c_i$ is only adjacent to $d^{-1}_{n-1-i}c_{n-2}$ in the interior of $d^{-1}_{n-1-i}\Pi$; moreover, $c_{i-1}\sim d^{-1}_{n-1-i}c_{n-2}$ for $i\geqslant 2$.
	\end{enumerate}
	Now (1) and (2) follow by applying the action of $\ui{i}$ (note that $\ui{i}d^{-1}_{j-i}=u^{-1}_j$).
\end{proof}

\subsection{Relations to Bestvina's complexes}
We leave a short remark on the relation between $X$ and several complexes defined in Bestvina's paper \cite{Bestvina1999}. We will not prove the statements in this subsection, since they are not used in the later part of the paper, and their proofs follow from the same arguments as in Section~\ref{sec:link of vertices}.

The \emph{central segment} of a cell $\Pi$ in $X$ is the edge path starting at $l$, traveling through $c_1,c_2,\ldots,c_{n-2}$, and arriving at $r$ (see Figure~\ref{f:cell}). A \emph{central line} in $X$ is a subset which is homeomorphic to $\mathbb R$ and is a concatenation of central segments. Note that for each vertex $v\in X$, there is a unique central line in $X$ that contains $v$. Two central lines $\ell_1,\ell_2$ are \emph{adjacent} if there exist vertices $v_1\in\ell_1$ and $v_2\in\ell_2$ such that $v_1$ and $v_2$ are adjacent.

We define a simplicial complex $Z$ as follows. Vertices of $Z$ are in 1-1 correspondence with central lines in $X$. Two vertices are joined by an edge if the corresponding central lines are adjacent. A collection of vertices spans a simplex if each pair of vertices in the collection are joined by an edge. Then $Z$ is isomorphic to the simplicial complex $X(\mathcal G)$ in \cite[Definition 2.3]{Bestvina1999}. Moreover, $X$ is homeomorphic to $Z\times\mathbb R$. There is another complex in Bestvina's paper \cite{Bestvina1999} which is homeomorphic to $Z\times\mathbb R$. It is denoted by $X(\mathcal A)$ and defined in \cite[pp.280]{Bestvina1999}. However, the simplicial structures on $X$ and $X(\mathcal A)$ are different.

\section{Links of vertices in $X$}
\label{sec:link of vertices}
In this section we study local structure of the space $X$ defined in Definition~\ref{def:construct X}.
\subsection{Prisms}
We recall a standard simplicial subdivision of a prism (\cite[Chapter 2.1]{MR1867354}). Let $\Delta^n$ be the $n$-dimensional simplex. Let $P=\Delta^n\times[0,1]$ be a prism. We use $[v_0,\ldots,v_n]$ (resp.\ $[w_0,\ldots.w_n]$) to denote the simplex $\Delta^n\times\{0\}$ (resp.\ $\Delta^n\times\{1\}$). Then $P$ can be subdivided into $(n+1)$-simplexes, each is of the form $[v_0,\ldots,v_i,w_i,\ldots,w_n]$. The prism $P$ with such simplicial structure is called a \emph{subdivided prism}. Note that in the $1$-skeleton $P^{(1)}$, $v_i\sim w_j$ for $j\geqslant i$ and $v_i\nsim w_j$ for $j<i$. This motivates the following definition.

\begin{definition}
	\label{def:prism}
Let $\Gamma$ be a finite simple graph with its vertex set $J$. Suppose there is a partition $J=W\sqcup W'$. We define $\Gamma=\prism{W}{W'}$ if
\begin{enumerate}
	\item $W$ spans a complete subgraph of $\Gamma$, so does $W'$;
	\item $W$ and $W'$ have the same cardinality;
	\item it is possible to order the vertices of $W$ as $\{w_1,\ldots,w_n\}$ such that $W'_1\supsetneq\cdots\supsetneq W'_n\neq \emptyset$, where $W'_i$ is the collection of vertices in $W'$ that are adjacent to $w_i$.
\end{enumerate}
\end{definition}

It is clear from the definition that a simple graph $\Gamma$ is isomorphic to the $1$-skeleton of a subdivided prism if and only if its vertex set has a partition such that $\Gamma=\prism{W}{W'}$.

Definition \ref{def:prism} (2) and (3) imply $w_1$ is connected to each vertex of $W'$, and $w_n$ is connected to only one vertex of $W'$. Moreover, we deduce from (2) and (3) that (3) is also true if we switch the role of $W$ and $W'$. The set $W$ has a linear order, where $w_i<w_j$ if $W'_i\subsetneq W'_j$. Similarly, $W'$ has a linear order. 

Let $\Gamma$ be a graph. Recall that a subgraph $\Gamma'\subset \Gamma$ is a \emph{full subgraph} if $\Gamma'$ satisfies that an edge of $\Gamma$ is inside $\Gamma'$ if and only if the vertices of this edge are inside $\Gamma'$. Let $W\subset\Gamma$ be a collection of vertices. The full subgraph \emph{spanned} by $W$ is the minimal full subgraph that contains $W$.

\begin{definition}
Let $\Gamma'$ be a simplicial graph and let $W$ and $W'$ be two disjoint collections of vertices of $\Gamma'$. We say that $W$ and $W'$ \emph{span a prism} if the full subgraph spanned by $W\cup W'$ is isomorphic to $\prism{W}{W'}$.
\end{definition}

Now we discuss a particular type of graphs which will appear repeatedly in our computation. The reader can proceed directly to Section \ref{subsec: link of a true vertex} and come back when needed.

\begin{definition}
	\label{def:thick hexagon}
A \emph{thick hexagon} is a finite simplicial graph $\Gamma$ such that its vertex set $J$ admits a partition $J=\{c_l\}\sqcup\{c_r\}\sqcup U_l\sqcup U_r\sqcup D_l\sqcup D_r$ satisfying the following conditions: 
\begin{enumerate}
	\item the collection of vertices in $J$ that are adjacent to $c_{l}$ (resp.\ $c_{r}$) is $U_l\cup D_l$ (resp.\ $U_r\cup D_r$);
	\item there are no edges between a vertex in $U_l\cup U_r$ and a vertex in $D_l\cup D_r$;
	\item $U_l$ and $U_r$ span a prism;
	\item $D_l$ and $D_r$ span a prism.
\end{enumerate}
\end{definition}

See Figure \ref{fig:6} for an example of a thick hexagon (edges of the complete subgraphs $U_l,U_r,D_l$ and $D_r$ are not drawn in the picture). Note that if $U_l,U_r,D_l$ and $D_r$ are sets made of a single point, then $\Gamma$ is a $6$-cycle.
\begin{figure}[h!]
	\includegraphics[scale=0.55]{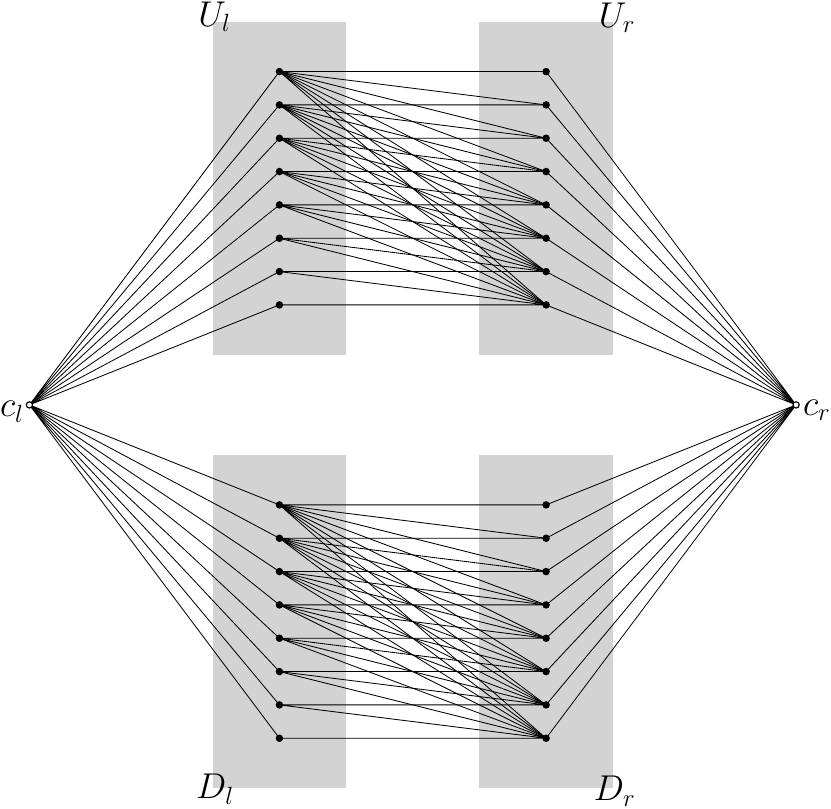}
	\caption{A thick hexagon.}\label{fig:6}
\end{figure}

\begin{lemma}
	\label{lem:thick hexagon is six-large}
Let $\Gamma$ be a thick hexagon. Then $\Gamma$ is $6$-large.
\end{lemma}

\begin{proof}
We use $d$ to denote the combinatorial distance between vertices of $\Gamma$. Note that $d(c_l,c_r)=3$. Let $C$ be a simple $4$-cycle or $5$-cycle in $\Gamma$. We need to show that $C$ has a diagonal. First note that it is impossible that both $c_l$ and $c_r$ are inside $C$, otherwise the length of $C$ is $\geqslant 6$ since it contains two paths from $c_l$ to $c_r$, each of which has length $\geqslant 3$. Since $C$ is simple, we assume without loss of generality that the vertices of $C$ are contained in $c_l\cup U_l\cup U_r$. If $c_l\in C$, then $C$ has a diagonal since each pair of vertices in $U_l$ are connected by an edge. So it remains to consider the case $C\subset\prism{U_l}{U_r}$. 

We assume in addition that there does not exist a pair of non-consecutive vertices in $C$ such that they are both contained in $U_l$ or $U_r$, otherwise $C$ has a diagonal since each of $U_l$ and $U_r$ spans a complete subgraph. It follows from parity considerations that $C$ has to be a $4$-cycle. Let $v_1,v_2,v_3,v_4$ be the consecutive vertices of $C$ such that $v_1,v_2\in U_l$, and $v_3,v_4\in U_r$. If $v_1>v_2$ with respect to the linear order in the above discussion, then $v_1\sim v_3$. If $v_1<v_2$, then $v_2\sim v_4$. Thus in each case $C$ has a diagonal.
\end{proof}

\subsection{Link of a real vertex}
\label{subsec: link of a true vertex}

In this subsection we analyze links of real vertices of the complex $X$ constructed in Definition~\ref{def:construct X}. Since $DA_n$ acts freely and transitively on the set of such vertices it is enough to describe the link of one of them. We pick the real vertex coinciding with
the identity. Following our notation this is the vertex $l\in \Pi$. Otherwise, the same vertex can be described
as: $u_i^{-1}u_i\in u_i^{-1}\Pi$, $d_i^{-1}d_i\in \di{i}\Pi$, or $r^{-1}r\in r^{-1}\Pi$.
%
%
Let $V$ be the set of vertices of $X$ that are adjacent to $l$, and let $\Gamma_V$ be the full subgraph of $X^{(1)}$ spanned by $V$. 
Our goal in this subsection is the following.
\begin{prop}
	\label{prop:real vertex link is six-large}
	The graph $\Gamma_V$ is $6$-large.
\end{prop}
The case $n=2$ is not hard, and we handle it in Remark~\ref{rem:n2} at the end of this subsection. In what follows we assume that $n\geqslant 3$.

Moreover, for reasons that will be explained in Section~\ref{s:general} we need to analyze distances 
in $\Gamma_V$ between some particular vertices. The precise statement is in Lemma~\ref{l:ain far from aout}
below.
 
First, we describe vertices adjacent to $l$ in various copies of $\Pi$. Observe that 
$g^{-1}\Pi$ contains a vertex adjacent to $l$ only for $g\in \{l,r\}\cup \{d_i\}_{i=1}^{n-1}\cup \{u_i\}_{i=1}^{n-1}$. We do not provide proofs of the following three lemmas since they are immediate consequences of 
the form of cells $g^{-1}\Pi$ (see Figure~\ref{f:cell}).
\begin{lemma}\
	\label{l:vertices adj0}
	\begin{enumerate}
		\item There are exactly three vertices in $\Pi$ adjacent to $l$, which are $u_1$, $c_1$, and $d_1$ (see Figure~\ref{fig:L46}).
		\item There are exactly three vertices in $r^{-1}\Pi$ adjacent to $l$, which are $r^{-1}u_{n-1}$, $r^{-1}c_{n-2}$, and $r^{-1}d_{n-1}$ (see Figure~\ref{fig:L46}).		
	\end{enumerate}	
\end{lemma}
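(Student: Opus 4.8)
The plan is to read both assertions straight off the triangulation of a single cell shown in Figure~\ref{f:cell}, once one has located where the vertex $l$ sits inside the two cells $\Pi$ and $r^{-1}\Pi$. The one preliminary fact I would isolate first is that \emph{the edges of $X$ contained in a single cell are exactly the edges of the subdivided precell.} Indeed, $X'$ is obtained from $\Xb$ by adjoining new edges, and by construction every such new edge joins an interior vertex of one cell to an interior vertex of a \emph{different} cell; since two distinct cells meet only along a path in their boundaries (Corollary~\ref{cor:connected intersection}) whereas interior vertices lie off $\partial\Pi$, such an edge is never contained in a single cell. Passing to the flag completion $X$ creates higher-dimensional simplices but no new edges. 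Hence, to determine the vertices of a cell adjacent to a given vertex of that cell, it suffices to inspect the triangulation of the cell.

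For (1): in $\Pi$ the vertex $l$ is the left tip. From Figure~\ref{f:cell}, the neighbours of the left tip inside $\Pi$ are precisely the first vertex $u_1$ of the upper half of $\partial\Pi$, the first interior vertex $c_1$ of the central segment, and the first vertex $d_1$ of the lower half of $\partial\Pi$ — three vertices in all (and when $n=3$ the central segment has $c_1$ as its only interior vertex, so the list still reads $u_1,c_1,d_1$). Combined with the preliminary fact, this proves (1).

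For (2): by our normalisation $l\in\Pi$ is the identity of $DA_n$, while the right tip $r\in\Pi$ represents the group element $w=\underbrace{aba\cdots}_{n}=\underbrace{bab\cdots}_{n}$. Therefore $l=w^{-1}w=w^{-1}\!\cdot r$ lies in $w^{-1}\Pi=r^{-1}\Pi$; more precisely, $l$ is the image of the right tip under the action of $r^{-1}$, i.e.\ $l$ is the right tip of the cell $r^{-1}\Pi$. By the triangulation of $\Pi$ the neighbours of the right tip $r$ inside $\Pi$ are exactly $u_{n-1}$, $c_{n-2}$, $d_{n-1}$; applying the simplicial automorphism of $X$ induced by $r^{-1}$, which carries $\Pi$ onto $r^{-1}\Pi$ and $r$ onto $l$, we conclude that the neighbours of $l$ inside $r^{-1}\Pi$ are exactly $r^{-1}u_{n-1}$, $r^{-1}c_{n-2}$, $r^{-1}d_{n-1}$, proving (2).

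There is no genuine obstacle here; the only points needing a little care are the bookkeeping that identifies the right tip $r\in\Pi$ with the group element $w$ — so that $l$ is recognised as the right tip of $r^{-1}\Pi$ rather than as a generic vertex — and the preliminary observation that within a single cell the edges of $X$ are just those of the subdivided precell, which is what legitimises reading the local picture off Figure~\ref{f:cell}.
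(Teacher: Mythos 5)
Your proof is correct and matches the paper's approach: the paper states this lemma without proof as an ``immediate consequence of the form of the cells,'' and your argument simply makes that explicit by locating $l$ as the left tip of $\Pi$ and the right tip of $r^{-1}\Pi$ and reading the neighbours off Figure~\ref{f:cell}. Your preliminary observation --- that the edges added in passing from $\Xb$ to $X$ join only interior vertices of distinct cells, hence never touch the real vertex $l$ --- is exactly the right justification for why the figure tells the whole story.
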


\begin{figure}[h!]
	\includegraphics[scale=0.65]{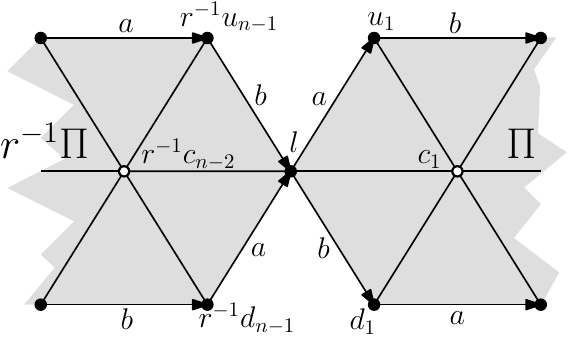}
	\caption{Lemma~\ref{l:vertices adj0}.}
		\label{fig:L46}
\end{figure}

\begin{lemma}\
	\label{l:vertices adj1}
	\begin{enumerate}
		\item There are exactly three vertices in $\di{1}\Pi$ adjacent to $l$, which are $\di{1}l$, $\di{1}c_1$, and $\di{1}u_2$ (see Figure~\ref{fig:L47} (1)).
		\item Suppose that $2\leqslant i \leqslant n-2$. Then there are exactly four vertices in $\di{i}\Pi$
		adjacent to $l$, which are $\di{i}u_{i-1}$, $\di{i}c_{i-1}$, $\di{i}c_{i}$, and $\di{i}u_{i+1}$ (see Figure~\ref{fig:L47} (2)).
		\item There are exactly three vertices in $\di{n-1}\Pi$ adjacent to $l$, which are $\di{n-1}r$, $\di{n-1}c_{n-2}$, and $\di{n-1}u_{n-2}$ (see Figure~\ref{fig:L47} (3)).
	\end{enumerate}	
\end{lemma}

\begin{figure}[h!]
	\includegraphics[scale=0.65]{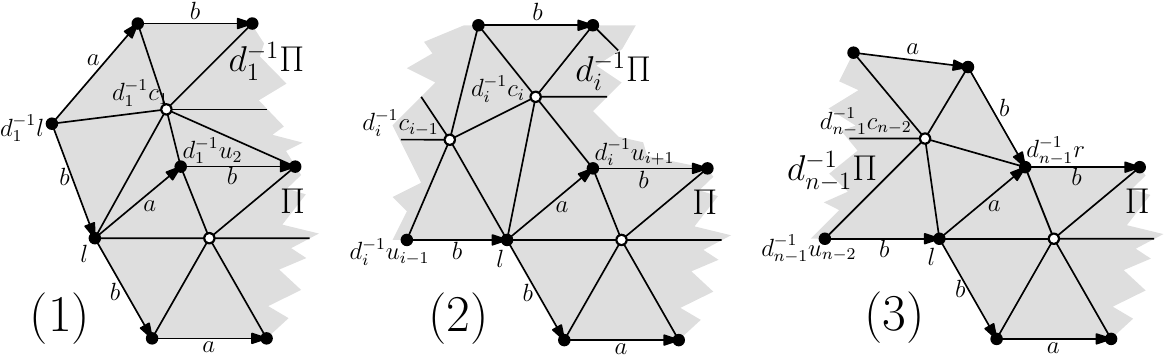}
	\caption{Lemma~\ref{l:vertices adj1}.}
	\label{fig:L47}
\end{figure}

Similarly, Lemma~\ref{l:vertices adj1} still holds with $u$ and $d$ interchanged. That is, we have the following.
\begin{lemma}\
	\label{l:vertices adj2}
	\begin{enumerate}
		\item There are exactly three vertices in $\ui{1}\Pi$ adjacent to $l$, which are $\ui{1}l$, $\ui{1}c_1$, and $\ui{1}d_2$.
		\item Suppose that $2\leqslant i \leqslant n-2$. Then there are exactly four vertices in $\ui{i}\Pi$
		adjacent to $l$, which are $\ui{i}d_{i-1}$, $\ui{i}c_{i-1}$, $\ui{i}c_{i}$, and $\ui{i}d_{i+1}$.
		\item There are exactly three vertices in $\di{n-1}\Pi$ adjacent to $l$, which are $\ui{n-1}r$, $\ui{n-1}c_{n-2}$, and $\ui{n-1}d_{n-2}$.
	\end{enumerate}	
\end{lemma}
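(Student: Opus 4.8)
The statement to prove is Lemma~\ref{l:vertices adj2}, which is the analogue of Lemma~\ref{l:vertices adj1} with $u$ and $d$ interchanged.

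The plan is to reduce the claim to Lemma~\ref{l:vertices adj1} by exploiting a symmetry of the defining presentation of $DA_n$. The presentation $\langle a,b\mid \underbrace{aba\cdots}_n = \underbrace{bab\cdots}_n\rangle$ is invariant under the automorphism $\sigma$ swapping $a$ and $b$ (the relator word $\underbrace{aba\cdots}_n$ maps to $\underbrace{bab\cdots}_n$ and vice versa, so the relation is preserved). This automorphism induces a simplicial automorphism of the universal cover $\Xa$ of the presentation complex, which descends to the subdivided complex $\Xb$ and — because the recipe for adding diagonals in Section~\ref{subsec:subividing and adding new edges} is itself symmetric under interchanging upper and lower halves of a precell (compare Figures~\ref{fig:2} and~\ref{fig:1}) — extends to a simplicial automorphism $\widehat\sigma$ of $X$. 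Under $\widehat\sigma$, the base cell $\Pi$ is sent to a cell with the same left tip $l$, but with its upper half traced by $\underbrace{bab\cdots}_n$ rather than $\underbrace{aba\cdots}_n$; equivalently, $\widehat\sigma$ interchanges the roles of $u_i$ and $d_i$ inside $\Pi$, and simultaneously conjugates the generators, so that $\widehat\sigma(d_i^{-1}\Pi) = u_i^{-1}\Pi$ and $\widehat\sigma(d_i^{-1}c_j) = u_i^{-1}c_j$, $\widehat\sigma(d_i^{-1}u_k)=u_i^{-1}d_k$, and so on.

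Concretely, here are the steps I would carry out. First, verify that $\sigma\colon a\leftrightarrow b$ is indeed an automorphism of $DA_n$ and that it lifts to a label-and-orientation-respecting simplicial automorphism of $\Xa$ fixing the basepoint $l$; this is where the symmetry of the relator is used. Second, check that this automorphism is compatible with the subdivision (it permutes precells, hence sends cells to cells and interior vertices $c_j$ to interior vertices, reversing ``upper'' and ``lower''). Third, check that it respects the set of added diagonals: the pair $(\Pi, u_i^{-1}\Pi)$ is carried to $(\Pi, d_i^{-1}\Pi)$, and the zigzag edges prescribed in Figure~\ref{fig:2} for $(\Pi,u_i^{-1}\Pi)$ are carried exactly to those prescribed in Figure~\ref{fig:1} for $(\Pi,d_i^{-1}\Pi)$, by inspection of the two figures; so $\widehat\sigma$ extends to $X'$ and hence to its flag completion $X$. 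Fourth, since $\widehat\sigma$ fixes $l$, it carries the link of $l$ to itself, carrying the subgraph $\di{i}\Pi\cap \mathrm{lk}(l)$ to $\ui{i}\Pi\cap\mathrm{lk}(l)$; applying Lemma~\ref{l:vertices adj1} and translating $d\leftrightarrow u$ via $\widehat\sigma$ then yields each of the three assertions of Lemma~\ref{l:vertices adj2} verbatim. (One small bookkeeping point: part (3) as stated mentions $\di{n-1}\Pi$ but clearly should read $\ui{n-1}\Pi$, with vertices $\ui{n-1}r$, $\ui{n-1}c_{n-2}$, $\ui{n-1}d_{n-2}$; this is a typo and the symmetry argument produces the corrected statement.)

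The main obstacle is purely a matter of careful bookkeeping rather than any real difficulty: one must be sure that the diagonal-adding procedure — which was defined via the distinguished finite set $\Lambda_0$ of pairs $(\Pi,\ui{i}\Pi)$ and $(\Pi,\di{i}\Pi)$ together with the $DA_n$-action — is genuinely preserved by $\widehat\sigma$ and not just ``up to reindexing''. Since $\widehat\sigma$ normalizes the $DA_n$-action (it is conjugation by an outer automorphism, acting on the group and compatibly on $X$) and permutes $\Lambda_0$ as described, the edges added between an arbitrary pair $(\Pi_1,\Pi_2)\in\Lambda$ are sent to the edges added between $\widehat\sigma(\Pi_1,\Pi_2)$; this is exactly what is needed. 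Once this is in hand the three statements follow immediately, so I would keep this verification brief and then simply say ``Lemma~\ref{l:vertices adj2} follows from Lemma~\ref{l:vertices adj1} by applying $\widehat\sigma$.''
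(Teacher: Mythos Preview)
Your argument is correct, but it is considerably more elaborate than what the paper does. The paper explicitly states, just before Lemma~\ref{l:vertices adj0}, that no proofs of Lemmas~\ref{l:vertices adj0}--\ref{l:vertices adj2} are given because they are ``immediate consequences of the form of cells $g^{-1}\Pi$ (see Figure~\ref{f:cell}).'' In other words, the intended proof is simply to observe that $l=u_i^{-1}u_i$ sits at a specific boundary position of the cell $u_i^{-1}\Pi$, and then to read off its neighbours directly from the triangulated hexagon in Figure~\ref{f:cell}; since passing from $\Xb$ to $X$ adds only edges between interior vertices, adjacency to the real vertex $l$ is unchanged.

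Your route---constructing the $a\leftrightarrow b$ involution $\widehat\sigma$ on $X$ and transporting Lemma~\ref{l:vertices adj1} through it---is a genuine alternative. It has the virtue of making the $u\leftrightarrow d$ symmetry of the whole construction explicit (and once $\widehat\sigma$ is built, it could shorten several later ``the case of $d$ is similar'' arguments in Section~\ref{sec:link of vertices}). The cost is that you must verify $\widehat\sigma$ really preserves the added diagonals; your sketch of this is sound (the key point being that $\sigma$ permutes $\Lambda_0$ and intertwines the two zigzag patterns of Figures~\ref{fig:2} and~\ref{fig:1}, so equivariance propagates to all of $\Lambda$), though the phrase ``sent to a cell with the same left tip $l$, but with its upper half traced by $bab\cdots$'' is slightly off---more precisely $\widehat\sigma(\Pi)=\Pi$ as a subcomplex, with the upper and lower halves interchanged. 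Your observation about the typo in part~(3) is also correct.
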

Let us make few other immediate observations.
\begin{lemma}
	\label{l:adj real}
	There are exactly four real vertices adjacent to $l$, which are $d_1$, $u_1$, $r^{-1}d_{n-1}$, and
	$r^{-1}u_{n-1}$. The following identifications hold:
	\begin{enumerate}
		\item $\ui{1}l=r^{-1}d_{n-1}$, $\ui{1}d_2=d_1$, $\di{1}l=r^{-1}u_{n-1}$, and $\di{1}u_2=u_1$.
		\item Suppose that $2\leqslant i \leqslant n-2$. Then $\di{i}u_{i-1}=r^{-1}u_{n-1}$,
		$\di{i}u_{i+1}=u_1$, $\ui{i}d_{i-1}=r^{-1}d_{n-1}$, and $\ui{i}d_{i+1}=d_1$.
		\item $\ui{n-1}r=d_1$, $\ui{n-1}d_{n-2}=r^{-1}d_{n-1}$, $\di{n-1}r=u_1$, and $\di{n-1}u_{n-2}=r^{-1}u_{n-1}$.
	\end{enumerate}	
\end{lemma}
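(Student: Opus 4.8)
The plan is to reduce every assertion to an elementary cancellation in $DA_n$. Recall that $DA_n$ acts freely and transitively on the real vertices of $X$, with $l$ the identity, and that by Corollary~\ref{cor:cell embeded} each copy $g^{-1}\Pi$ of the base cell is embedded; hence for $v\in\{l,r,u_1,\dots,u_{n-1},d_1,\dots,d_{n-1}\}$ the vertex of $g^{-1}\Pi$ labelled $v$ is the well-defined group element $g^{-1}v\in DA_n$ (with $l=1$). Tracking the labels along the two halves of $\partial\Pi$ in Figure~\ref{f:cell} one has $u_1=a$, $d_1=b$, $u_n=d_n=r$, and the relations
\begin{equation*}
	u_i=d_{i-1}u_1,\qquad d_i=u_{i-1}d_1\qquad(1\le i\le n,\ u_0=d_0=l).
\end{equation*}
For $1\le i\le n-1$ these hold already in the free monoid, while for $i=n$ they are the defining relation $\underbrace{aba\cdots}_n=\underbrace{bab\cdots}_n$. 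In particular $r=d_{n-1}u_1=u_{n-1}d_1$, whence $r^{-1}d_{n-1}=u_1^{-1}$ and $r^{-1}u_{n-1}=d_1^{-1}$.

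First I would collect the real vertices adjacent to $l$. By the observation preceding Lemma~\ref{l:vertices adj0}, only the cells $g^{-1}\Pi$ with $g\in\{l,r\}\cup\{u_i\}_{i=1}^{n-1}\cup\{d_i\}_{i=1}^{n-1}$ meet $l$, and Lemmas~\ref{l:vertices adj0}, \ref{l:vertices adj1} and \ref{l:vertices adj2} record their vertices adjacent to $l$; the real ones are $u_1,d_1$ (from $\Pi$), $r^{-1}u_{n-1},r^{-1}d_{n-1}$ (from $r^{-1}\Pi$), $\di{1}l,\di{1}u_2,\di{n-1}r,\di{n-1}u_{n-2}$, the vertices $\di{i}u_{i-1},\di{i}u_{i+1}$ for $2\le i\le n-2$, and the list obtained from the last group by interchanging $u$ and $d$. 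It remains to identify each of these with one of $u_1,d_1,r^{-1}u_{n-1},r^{-1}d_{n-1}$ — which is exactly the content of (1)--(3) — and to check the four are pairwise distinct. Each identification is a single cancellation via the relations of the first paragraph: from $u_{i+1}=d_iu_1$ and $u_{i-1}=d_id_1^{-1}$ one gets $\di{i}u_{i+1}=u_1$ and $\di{i}u_{i-1}=d_1^{-1}=r^{-1}u_{n-1}$ (this also handles $\di{1}u_2$ and $\di{n-1}u_{n-2}$); from $r=d_{n-1}u_1$ one gets $\di{n-1}r=u_1$; and $\di{1}l=d_1^{-1}=r^{-1}u_{n-1}$ since $l=1$. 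The remaining cases, and all the statements about the cells $\ui{i}\Pi$, follow verbatim after interchanging $a$ with $b$ and $u$ with $d$.

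For distinctness, the first paragraph shows $r^{-1}u_{n-1}=d_1^{-1}=b^{-1}$ and $r^{-1}d_{n-1}=u_1^{-1}=a^{-1}$, so the four neighbours are $a,b,a^{-1},b^{-1}$; these are pairwise distinct in $DA_n$ because $a\neq b$ and none of $a^2,b^2,ab,ba$ is trivial — each an instance of Theorem~\ref{thm:injective artin monoid} together with Lemma~\ref{lem:words in the positive monoid} (which in particular distinguishes positive words of different length). Since Lemma~\ref{l:vertices adj0} exhibits all four as genuine neighbours of $l$, this gives the lemma. I expect no real obstacle: the statement is essentially bookkeeping. The only delicate point is to pin down, compatibly with the orientations of Figure~\ref{f:cell} and with the direction of the $DA_n$--action, the dictionary between labels in $g^{-1}\Pi$ and elements of $DA_n$ so that the relations $u_i=d_{i-1}u_1$, $d_i=u_{i-1}d_1$ hold as written; with that in hand each identity in (1)--(3) is a one-step free cancellation.
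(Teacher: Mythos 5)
Your proof is correct, and it supplies exactly the bookkeeping the paper treats as immediate: the paper states Lemma~\ref{l:adj real} without proof, and the intended argument is precisely your identification of real vertices with group elements via the free transitive action, the relations $u_i=d_{i-1}u_1$, $d_i=u_{i-1}d_1$ (which are consistent with the paper's later uses, e.g.\ $u_iu_j^{-1}=d_{j-i}^{-1}$ for $u_i$ in the upper half and $r^{-1}u_{n-1}=d_1^{-1}=b^+$ in Remark~\ref{rmk:distance}), and one-step cancellations. The distinctness of $a,b,a^{-1},b^{-1}$ via Theorem~\ref{thm:injective artin monoid} and Lemma~\ref{lem:words in the positive monoid} is also handled correctly.
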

We define the following mutually disjoint collections of vertices:
\begin{itemize}
	\item $D_l=\{r^{-1}d_{n-1}\}\cup\{\ui{j}c_{j-1}\}_{j=2}^{n-1}$;
	\item $D_r=\{d_1\}\cup\{\ui{j}c_j\}_{j=1}^{n-2}$;
	\item $U_l=\{r^{-1}u_{n-1}\}\cup\{\di{j}c_{j-1}\}_{j=2}^{n-1}$;
	\item $U_r=\{u_1\}\cup\{\di{j}c_j\}_{j=1}^{n-2}$.
\end{itemize}
By Lemma~\ref{l:vertices adj0}, Lemma~\ref{l:vertices adj1}, and Lemma~\ref{l:vertices adj2}, $V=\{c_{1},r^{-1}c_{n-2}\}\sqcup U_l\sqcup U_r\sqcup D_l\sqcup D_r$. Then Proposition~\ref{prop:real vertex link is six-large} is a consequence of the following result and Lemma \ref{lem:thick hexagon is six-large}.
\begin{figure}[h!]
	\centering
	\includegraphics[width=1\textwidth]{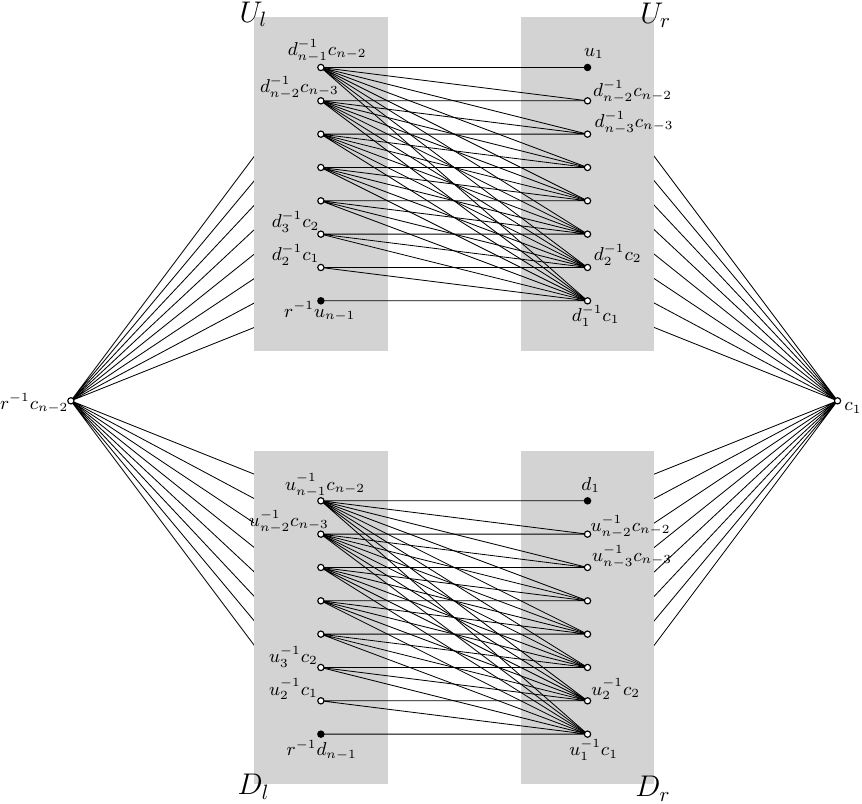}
	\caption{The structure of the link $\Gamma_V$ of a real vertex, the case $n=9$. (Edges in the complete
		graphs $U_r,U_l,D_r$, and $D_l$ are not shown.)}
	\label{f:real_link}
\end{figure}
\begin{prop}
	\label{p:real link is a thick hexagon}
	With the above definition of $U_l,U_r,D_l$ and $D_r$, the graph $\Gamma_V$ satisfies each condition of Definition \ref{def:thick hexagon} with $c_l$ replaced by $r^{-1}c_{n-2}$ and $c_r$ replaced by $c_{1}$ (see Figure~\ref{f:real_link}).
\end{prop}
To prove Proposition~\ref{p:real link is a thick hexagon} we need a few preparatory lemmas.
We will check each item in Definition~\ref{def:thick hexagon}.
\begin{lemma}\
	\label{l:prism1}
	\begin{enumerate}
		\item $c_1\nsim r^{-1}c_{n-2}$, and $c_1\nsim r^{-1}d_{n-1}, r^{-1}u_{n-1}$, and 
		$r^{-1}c_{n-2}\nsim u_1,d_1$.
		\item $c_1\sim d_1, u_1$, and $r^{-1}c_{n-2}\sim r^{-1}d_{n-1},r^{-1}u_{n-1}$.
		\item For $1\leqslant j \leqslant n-2$, we have $c_1 \sim \ui{j}c_j,\di{j}c_j$, and
		$c_1 \nsim \ui{j+1}c_j,\di{j+1}c_j$.
		\item For $1\leqslant j \leqslant n-2$, we have $r^{-1}c_{n-2} \sim \ui{j+1}c_j,\di{j+1}c_j$, and
		$r^{-1}c_{n-2} \nsim \ui{j}c_j,\di{j}c_j$.

	\end{enumerate}
	Consequently, the collection of vertices in $V$ that are adjacent to $c_1$ (resp.\ $r^{-1}c_{n-2}$) is 
	$U_r\sqcup D_r$ (resp.\ $U_l\sqcup D_l$).
\end{lemma}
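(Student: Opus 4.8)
The plan is to read off all adjacencies and non-adjacencies directly from the local picture of $X$, using two structural facts. First, distinct cells of $\Xb$ overlap only in real edges, so an interior vertex lies in exactly one cell; hence the neighbours in $X$ of an interior vertex $v$ of a cell $\Pi'$ are precisely its neighbours inside the triangulated $\Pi'$, together with the far endpoints of the zigzag edges at $v$, which lie in cells $\Pi''$ with $\{\Pi',\Pi''\}\in\Lambda$. Second, translating $\Lambda_0$ by $DA_n$ shows the cells $\Pi''$ with $\{\Pi,\Pi''\}\in\Lambda$ are exactly $\ui{i}\Pi$, $\di{i}\Pi$, $u_i\Pi$, $d_i\Pi$ for $1\leqslant i\leqslant n-2$ (e.g.\ $\{\Pi,u_i\Pi\}=u_i\{\Pi,\ui{i}\Pi\}$), and the zigzag edges across each such pair are the appropriate translate of the rule of Figures~\ref{fig:2}--\ref{fig:1}; left-translating by $r^{-1}$ gives the analogous list for $r^{-1}\Pi$.

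For (1) and (2): the adjacencies $c_1\sim u_1,d_1$ and $r^{-1}c_{n-2}\sim r^{-1}u_{n-1},r^{-1}d_{n-1}$ hold because $\{l,u_1,c_1\}$, $\{l,d_1,c_1\}$ span triangles in $\Pi$ at its left tip, and $\{r,u_{n-1},c_{n-2}\}$, $\{r,d_{n-1},c_{n-2}\}$ span triangles in $\Pi$ at its right tip (translate by $r^{-1}$). For the non-adjacencies, $r^{-1}u_{n-1}=\di{1}l$ and $r^{-1}d_{n-1}=\ui{1}l$ are real vertices representing elements of negative length (Lemma~\ref{l:adj real}), hence not vertices of $\Pi$; since every neighbour of $c_1$ is a real vertex of $\Pi$ or an interior vertex of another cell, $c_1\nsim r^{-1}u_{n-1},r^{-1}d_{n-1}$, and symmetrically $r^{-1}c_{n-2}\nsim u_1,d_1$. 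Finally $c_1\nsim r^{-1}c_{n-2}$ because $\Pi\cap r^{-1}\Pi=\{l\}$: the only common tip is $l=r^{-1}r$, and by Corollary~\ref{cor:connected intersection} a larger intersection would have to join two tips while staying inside one half, which is impossible; so $\{\Pi,r^{-1}\Pi\}\notin\Lambda$ and no edge joins the two interior vertices. (Throughout, the length homomorphism $DA_n\to\mathbb Z$ and injectivity of $DA^{+}_n\to DA_n$ are used to distinguish the relevant group elements, hence vertices.)

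For (3), fix $1\leqslant j\leqslant n-2$. Since $\{\Pi,\ui{j}\Pi\}\in\Lambda_0$, setting the $\Pi$-index to $1$ in the rule of Figure~\ref{fig:2} shows the interior vertices of $\ui{j}\Pi$ adjacent to $c_1$ are exactly $\ui{j}c_j$, plus $\ui{j}c_{j+1}$ when $j\leqslant n-3$; this gives $c_1\sim\ui{j}c_j$ and, for $j\leqslant n-3$, $c_1\nsim\ui{j+1}c_j$. For $j=n-2$, $\ui{n-1}\Pi$ is not on the list of cells $\Lambda$-paired with $\Pi$, so $c_1\nsim\ui{n-1}c_{n-2}$; the $d$-versions are identical via Figure~\ref{fig:1}. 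For (4) one runs the same argument centred at $r^{-1}\Pi$: first identify $\ui{j+1}\Pi$ with $r^{-1}u_{n-j-1}\Pi$ or $r^{-1}d_{n-j-1}\Pi$ according to the parity of $n-j$ (equivalently $u_{n-j-1}u_{j+1}=r$ or $d_{n-j-1}u_{j+1}=r$, by Lemma~\ref{lem:words in the positive monoid}), so that $\{r^{-1}\Pi,\ui{j+1}\Pi\}=u_{j+1}^{-1}\{\Pi,\ui{n-j-1}\Pi\}$; pushing the rule of Figure~\ref{fig:2} (resp.\ \ref{fig:1}) through $u_{j+1}^{-1}$ shows the interior vertices of $\ui{j+1}\Pi$ adjacent to $r^{-1}c_{n-2}$ are $\ui{j+1}c_j$ together with $\ui{j+1}c_{j-1}$ (when $j\geqslant2$), giving $r^{-1}c_{n-2}\sim\ui{j+1}c_j$, while those in $\ui{j}\Pi$ carry indices $j-2,j-1$ (and for $j=1$ the pair $\{r^{-1}\Pi,\ui{1}\Pi\}$ is not in $\Lambda$), giving $r^{-1}c_{n-2}\nsim\ui{j}c_j$. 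The concluding assertion is bookkeeping: with $V=\{c_1,r^{-1}c_{n-2}\}\sqcup U_l\sqcup U_r\sqcup D_l\sqcup D_r$, parts (1)--(4) say the neighbours of $c_1$ in $V$ are exactly $\{u_1,d_1\}\cup\{\ui{j}c_j\}_{j=1}^{n-2}\cup\{\di{j}c_j\}_{j=1}^{n-2}=U_r\sqcup D_r$, and those of $r^{-1}c_{n-2}$ are $U_l\sqcup D_l$.

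The main difficulty is the bookkeeping: tracking which group element each named vertex represents, verifying that the list of cells $\Lambda$-paired with $\Pi$ (and with $r^{-1}\Pi$) is complete, and carrying the zigzag rule over to the pairs relevant for (4). The one genuine case split is the parity of $n-j$ in (4) — whether $\ui{j+1}\Pi$ is of the form $r^{-1}u_{\bullet}\Pi$ or $r^{-1}d_{\bullet}\Pi$ — but both cases then go through the same translated computation.
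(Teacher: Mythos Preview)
Your proof is correct and follows essentially the same approach as the paper: read the in-cell adjacencies from the triangulation, read the cross-cell adjacencies from the zigzag rule of Figures~\ref{fig:2}--\ref{fig:1}, and for part~(4) translate by $r^{-1}$ and use the identity $r^{-1}u_k\in\{\ui{n-k},\di{n-k}\}$ (with the parity governing which one). The only real differences are expository: you are more explicit than the paper in enumerating the full list of cells $\Lambda$-paired with $\Pi$ (namely $u_i^{\pm 1}\Pi,d_i^{\pm 1}\Pi$ for $1\leqslant i\leqslant n-2$) and in handling the boundary case $j=n-2$ of~(3) by noting that $\ui{n-1}\Pi$ is absent from that list, whereas the paper simply reads off from Figure~\ref{fig:2} that the neighbours of $c_1$ in $\ui{j+1}\Pi$ are $\ui{j+1}c_{j+1},\ui{j+1}c_{j+2}$ and hence not $\ui{j+1}c_j$.
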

\begin{proof}
	Observe that $\Pi \cap r^{-1}\Pi = \{l \}$, thus by Lemma~\ref{lem:technical lemma0}, there is no edge between $c_1$ and $r^{-1}c_{n-2}$. 
	Furthermore, it follows that $r^{-1}d_{n-1},r^{-1}u_{n-1}$ are real vertices not belonging to $\Pi$, hence $c_1\nsim r^{-1}d_{n-1}, r^{-1}u_{n-1}$. Similarly, $r^{-1}c_{n-2}\nsim u_1,d_1$. This shows (1).	
	Edges in (2) are edges in cells $\Pi$ and $r^{-1}\Pi$.
	Recall that the only vertices in $\ui{j}\Pi$ adjacent to $c_1\in \Pi$ are 
	$\ui{j}c_j$, and $\ui{j}c_{j+1}$ (see Definition~\ref{def:construct X}, as well as Figure~\ref{fig:2} with all the $i$'s in the figure replaced by $j$), and the only vertices in $\di{j}\Pi$ adjacent to $c_1$ are $\di{j}c_j$, and $\di{j}c_{j+1}$ (see Figure~\ref{fig:1}), hence (3).
	For (4) the argument is similar: the only vertices in $u_j\Pi$ adjacent to $c_{n-2}$ are 
	$u_{j}c_{n-2-j}$, and $u_jc_{n-1-j}$, and the only vertices in $d_j\Pi$ adjacent to $c_{n-2}$ are
	$d_jc_{n-2-j}$, and $d_jc_{n-1-j}$. Thus (4) follows by using the translation invariance of Definition~\ref{def:construct X} and translating the information in the previous sentence by $r^{-1}$. Note that $r^{-1}u_j$ is equal to $\ui{n-j}$ (for $(n-j)$ even) or $\di{n-j}$ (for $(n-j)$ odd), and that $r^{-1}d_j$ is equal to $\di{n-j}$ (for $(n-j)$ even) or $\ui{n-j}$ (for $(n-j)$ odd).
\end{proof}

\begin{lemma}\
	\label{l:prism2}
	\begin{enumerate}
		\item No two of the vertices $d_1, r^{-1}d_{n-1}, u_1, r^{-1}u_{n-1}$ are adjacent.
		\item For $1\leqslant j \leqslant n-2$, we have $u_1 \nsim \ui{j}c_j,\ui{j+1}c_j$, and
		$r^{-1}u_{n-1} \nsim \ui{j}c_j,\ui{j+1}c_j$.
		\item For $1\leqslant j \leqslant n-2$, we have $d_1 \nsim \di{j}c_j,\di{j+1}c_j$, and
		$r^{-1}d_{n-1} \nsim \di{j}c_j,\di{j+1}c_j$.
		\item For $1\leqslant j,k \leqslant n-2$, we have $\ui{j}c_j \nsim \di{k}c_k,\di{k+1}c_k$, and
		$\ui{j+1}c_j \nsim \di{k}c_k,\di{k+1}c_k$.
	\end{enumerate}
		Consequently, no vertex in $U_l\cup U_r$ is adjacent to a vertex in $D_l\cup D_r$.
\end{lemma}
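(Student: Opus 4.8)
The plan is to dispose of (1)--(4) by recognising, in each case, what type of edge the asserted adjacency would be, and then invoking how the $1$--skeleton of $X$ was assembled. Since we are in the standing case $n\geqslant 3$, the relevant structural facts are: passing from $\Xb$ to $X$ adds only edges between \emph{interior} vertices of pairs of cells whose intersection contains at least two edges; each interior vertex of $\Xb$ lies in a single cell; and the boundary edges of precells are precisely the edges of $\Xa$. Hence in $X$: two real vertices are adjacent iff their ratio lies in $\{a^{\pm1},b^{\pm1}\}$; a real vertex adjacent to an interior vertex $w$ must be a real vertex of the unique cell containing $w$; and two interior vertices lying in distinct cells $\Pi_1\ne\Pi_2$ are adjacent only if $\Pi_1\cap\Pi_2$ contains at least two edges.

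For (1): by Lemma~\ref{l:adj real} the four vertices $d_1,r^{-1}d_{n-1},u_1,r^{-1}u_{n-1}$ are the four pairwise distinct real neighbours of $l$, so each of them represents a generator or the inverse of one. If two of them, $v\ne w$, were adjacent we would have $v^{-1}w\in\{a^{\pm1},b^{\pm1}\}$, and since $v^{-1}w$ is then a non-trivial reduced word of length two over $\{a^{\pm1},b^{\pm1}\}$ this produces a relation in $DA_n$ equating two positive words of unequal length (for instance $b=a^2$, or $w'=1$ for a positive word $w'$, or $aba=1$, \dots). Such relations are forbidden by Theorem~\ref{thm:injective artin monoid} together with Lemma~\ref{lem:words in the positive monoid}(1), so no two of the four vertices are adjacent.

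For (2) and (3): the automorphism of $DA_n$ interchanging $a$ and $b$ (well defined because the defining relator is symmetric) induces a simplicial automorphism of $X$ fixing $l$ and every interior vertex $c_i$, interchanging $\ui{i}\Pi$ with $\di{i}\Pi$, and hence carrying $U_l,U_r$ onto $D_l,D_r$ and sending $u_1\mapsto d_1$, $r^{-1}u_{n-1}\mapsto r^{-1}d_{n-1}$. Thus (3) is the image of (2) under this automorphism, and it suffices to prove (2). For $1\le j\le n-2$ the vertices $\ui{j}c_j$ and $\ui{j+1}c_j$ are interior, lying respectively in $\ui{j}\Pi$ and $\ui{j+1}\Pi$, with indices ranging over $\{1,\dots,n-1\}$. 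By Lemma~\ref{l:vertices adj2} together with the identifications of Lemma~\ref{l:adj real}, for every $i\in\{1,\dots,n-1\}$ the only real vertices of $\ui{i}\Pi$ adjacent to $l$ are $d_1$ and $r^{-1}d_{n-1}$ (this must be checked uniformly, including the boundary cases $i=1$ and $i=n-1$). Consequently any real vertex adjacent to both $l$ and one of $\ui{j}c_j,\ui{j+1}c_j$ belongs to $\{d_1,r^{-1}d_{n-1}\}$; since $u_1$ and $r^{-1}u_{n-1}$ are adjacent to $l$ but are, by (1), different from $d_1$ and $r^{-1}d_{n-1}$, they are adjacent to neither $\ui{j}c_j$ nor $\ui{j+1}c_j$.

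For (4): all four vertices involved are interior; $\ui{j}c_j,\ui{j+1}c_j$ sit in $\ui{j}\Pi,\ui{j+1}\Pi$ and $\di{k}c_k,\di{k+1}c_k$ in $\di{k}\Pi,\di{k+1}\Pi$. Fix $\ui{a}\Pi\in\{\ui{j}\Pi,\ui{j+1}\Pi\}$ and $\di{b}\Pi\in\{\di{k}\Pi,\di{k+1}\Pi\}$ (so $a,b\le n-1$). By Lemma~\ref{lem:technical lemma0} and its mirror (interchanging $u$ and $d$), $\Pi\cap\ui{a}\Pi$ is a path with at least one edge contained in the lower half of $\partial\Pi$, while $\Pi\cap\di{b}\Pi$ is a path with at least one edge in the upper half; in particular $\ui{a}\Pi\ne\di{b}\Pi$, since a single cell cannot meet $\Pi$ in a path lying in both halves of $\partial\Pi$ by Corollary~\ref{cor:connected intersection}. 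Moreover Corollary~\ref{cor:disjoint}, applied to the precells underlying $\Pi,\di{b}\Pi,\ui{a}\Pi$ in the roles of $\Pa_2,\Pa_1,\Pa_3$, gives that $\ui{a}\Pi\cap\di{b}\Pi$ is empty or a single point, hence contains no edge. So the interior vertex of $\ui{a}\Pi$ and that of $\di{b}\Pi$ neither share a cell nor lie in two cells meeting along an edge, whence they are non-adjacent; this yields $\ui{j}c_j,\ui{j+1}c_j\nsim\di{k}c_k,\di{k+1}c_k$ for all $1\le j,k\le n-2$. Finally the displayed consequence is bookkeeping: $U_l\cup U_r$ consists of the real vertices $u_1,r^{-1}u_{n-1}$ and the interior vertices $\di{k}c_k,\di{k+1}c_k$ $(1\le k\le n-2)$, and $D_l\cup D_r$ of $d_1,r^{-1}d_{n-1}$ and $\ui{j}c_j,\ui{j+1}c_j$ $(1\le j\le n-2)$, so every cross pair is settled by (1) (two real vertices), (2) ($u_1$ or $r^{-1}u_{n-1}$ against an interior $\ui{\,\cdot\,}c_{\cdot}$), (3) ($d_1$ or $r^{-1}d_{n-1}$ against an interior $\di{\,\cdot\,}c_{\cdot}$), or (4) (two interior vertices). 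I expect the only real work to be this kind of bookkeeping---verifying that Lemmas~\ref{l:vertices adj0}--\ref{l:adj real} genuinely pin down $d_1,r^{-1}d_{n-1}$ as the \emph{only} real vertices of each $\ui{i}\Pi$ adjacent to $l$ uniformly in $i$, and checking in (4) that the precell intersections $\Pi\cap\ui{a}\Pi$, $\Pi\cap\di{b}\Pi$ are non-trivial paths so that Corollary~\ref{cor:disjoint} applies---rather than any new idea.
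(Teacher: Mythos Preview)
Your proposal is correct and largely parallels the paper's argument: parts (1) and (4) are handled essentially as in the paper (for (4) you invoke Corollary~\ref{cor:disjoint} exactly as the authors do), and your symmetry reduction of (3) to (2) is the formal version of the paper's ``interchanging $u$ and $d$''.

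The one genuine difference is in (2). The paper argues directly that $u_1\notin\ui{j}\Pi$, and then uses Corollary~\ref{cor:connected intersection}(2) to show $r^{-1}u_{n-1}\notin\ui{j}\Pi$: since $r^{-1}d_{n-1}\in\ui{j}\Pi$ (by Lemma~\ref{l:adj real}) and $r^{-1}d_{n-1},r^{-1}u_{n-1}$ lie in different halves of $\partial(r^{-1}\Pi)$, the intersection $r^{-1}\Pi\cap\ui{j}\Pi$ cannot contain both. You instead read off from Lemma~\ref{l:vertices adj2} that the real vertices of $\ui{j}\Pi$ adjacent to $l$ are exactly $d_1$ and $r^{-1}d_{n-1}$, and conclude $u_1,r^{-1}u_{n-1}\notin\ui{j}\Pi$ since they are adjacent to $l$ but not on that list. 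This is valid provided Lemma~\ref{l:vertices adj2} is understood as listing \emph{all} vertices of $\ui{j}\Pi$ adjacent to $l$ in $X$ (which is how it is stated); but note that the paper justifies that lemma only as an ``immediate consequence of the form of cells'', i.e.\ adjacency \emph{within} the cell, so the exclusion of $u_1,r^{-1}u_{n-1}$ from $\ui{j}\Pi$ is really the content being supplied here. The paper's route via Corollary~\ref{cor:connected intersection}(2) makes this step explicit and independent of how strongly one reads Lemma~\ref{l:vertices adj2}.
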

\begin{proof}
		All the vertices in (1) are real, and there are no edges between them in $\Xb$. Therefore,
		(1) follows from the fact that passing from $\Xb$ to $X$ we do not add edges between real vertices.
		For (2), observe that $u_1 \notin \ui{j}\Pi$ so that $u_1 \nsim \ui{j}c_j,\ui{j+1}c_j$. 
		By Lemma~\ref{l:adj real} (1), we have $u_1^{-1}l=r^{-1}d_{n-1}$.
		For $j\geqslant 2$, by Lemma~\ref{l:adj real} (2), we have $u_j^{-1}d_{j-1}=r^{-1}d_{n-1}$.
		Therefore, for $1\leqslant j \leqslant n-2$, we have $r^{-1}d_{n-1}\in u_j^{-1}\Pi$. Since, 
		by Corollary~\ref{cor:connected intersection} (2), the intersection of two cells is contained either
		in the upper half or in the lower half, we have that $r^{-1}u_{n-1}\notin u_j^{-1}\Pi$.		
		(3) can be proven the same way as (2), interchanging $u$ and $d$. For (4), observe that $\ui{j}\Pi\cap\Pi$ is contained in the lower 
		half of $\partial \Pi$, and $\di{j}\Pi\cap\Pi$ is contained in the upper 
		half of $\partial \Pi$. Then Corollary~\ref{cor:disjoint} implies that $\di{j}\Pi \cap \ui{k}\Pi$ is at most one point, hence no internal vertices of the two cells are adjacent by Lemma~\ref{lem:technical lemma0} (1).
\end{proof}
\begin{lemma}
	\label{l:prism3}
	Every two vertices in $D_r$ are adjacent. The same is true for $U_l,U_r$, and $D_l$. 
\end{lemma}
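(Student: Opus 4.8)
The plan is to show that each of the four sets $D_l, D_r, U_l, U_r$ spans a complete subgraph of $X^{(1)}$. By the interchange symmetry between $u$ and $d$ (the construction treats the upper and lower halves symmetrically, as already exploited in Lemmas~\ref{l:vertices adj1}--\ref{l:vertices adj2}), it suffices to treat $D_r = \{d_1\}\cup\{\ui{j}c_j\}_{j=1}^{n-2}$; the argument for $U_r$ is identical with $d$ replaced by $u$, and the arguments for $D_l,U_l$ are the analogous ones starting from the other tip. So I would first reduce to checking: (a) $d_1\sim \ui{j}c_j$ for all $1\leqslant j\leqslant n-2$, and (b) $\ui{i}c_i\sim\ui{j}c_j$ for all $1\leqslant i<j\leqslant n-2$.

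For (a): recall from Lemma~\ref{l:adj real}(2) that $d_1=\ui{j}d_{j+1}$ for $2\leqslant j\leqslant n-2$, and $d_1=\ui{1}d_2$ by Lemma~\ref{l:adj real}(1), so in the cell $\ui{j}\Pi$ the vertex $d_1$ is $\ui{j}d_{j+1}$, a boundary vertex. Inside the subdivided cell $\ui{j}\Pi$ (Figure~\ref{f:cell}), the interior vertex $\ui{j}c_j$ is joined by an edge to the boundary vertex $\ui{j}d_{j+1}$: this is an edge of the cell itself. Hence $d_1\sim\ui{j}c_j$. (For $j=1$ one checks directly that $c_1\sim d_2$ in $\Pi$, so $\ui{1}c_1\sim\ui{1}d_2=d_1$.)

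For (b): apply $\ui{i}$ and note $\ui{i}\cdot\ui{j-i}\,$-type identities reduce this to the edges added between a cell and its translate. More precisely, writing $j=i+k$ with $k\geqslant 1$, the pair $(\ui{i}\Pi,\ui{j}\Pi)$ is the $\ui{i}$--image of $(\Pi,\ui{k}\Pi)$ (since $\ui{i}d^{-1}_k = u^{-1}_{i+k}=\ui{j}$ or an analogous identity, as in the proof of Lemma~\ref{lem:technical lemma}); so $\ui{i}c_i\sim\ui{j}c_j$ iff $c_i\sim\ui{k}c_{i+k}$ (or a $d$-translate version thereof), and the latter is precisely one of the zigzag edges we added between $\Pi$ and $\ui{k}\Pi$ — indeed between $c_i$ and $\ui{k}c_{i+k}$ with $1\leqslant i\leqslant n-2-k$, which is exactly the range here. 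Thus the edge is present by construction. I would organize this via the already-proven Lemma~\ref{lem:technical lemma}, which records exactly which interior vertices of $\ui{j}\Pi$ are adjacent to $\ui{i}c_i$; it lists $\ui{j}c_j$ among them, giving (b) immediately.

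The only mild subtlety — and the step I would be most careful about — is bookkeeping: tracking which translate of $\Pi$ realizes each vertex of $D_r$ and making sure the indices stay in the valid range $1\leqslant i<j\leqslant n-2$ so that the relevant zigzag edge really was added (rather than the degenerate boundary cases $j=n-1$ handled separately in Lemma~\ref{lem:technical lemma}(2)). Since here both indices are at most $n-2$, we are always in case (1) or (3) of Lemma~\ref{lem:technical lemma}, so no boundary case arises and the edge $\ui{i}c_i\sim\ui{j}c_j$ (resp.\ $\di{i}c_i\sim\di{j}c_j$) is guaranteed. This makes the lemma essentially a direct corollary of Lemma~\ref{lem:technical lemma} together with Lemma~\ref{l:adj real}.
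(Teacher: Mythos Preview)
Your approach is essentially the same as the paper's: for $D_r$ you use Lemma~\ref{l:adj real} to identify $d_1=\ui{j}d_{j+1}$ and read off the edge $d_1\sim\ui{j}c_j$ from the cell, then invoke Lemma~\ref{lem:technical lemma}(1) for $\ui{i}c_i\sim\ui{j}c_j$. This matches the paper exactly.

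One small correction: your claim that ``no boundary case arises'' is true for $D_r$ and $U_r$, but not for $D_l$ and $U_l$. Recall $D_l=\{r^{-1}d_{n-1}\}\cup\{\ui{j}c_{j-1}\}_{j=2}^{n-1}$, so the index $j$ runs up to $n-1$. To show $\ui{j}c_{j-1}\sim\ui{n-1}c_{n-2}$ for $j<n-1$ you need precisely the ``moreover'' clause of Lemma~\ref{lem:technical lemma}(2), not just part (1). The paper handles $D_l$ separately for this reason, explicitly citing parts (1) and (2). Your phrase ``the arguments for $D_l,U_l$ are the analogous ones starting from the other tip'' glosses over this; the argument does go through, but it is not purely a symmetry reduction and does require the degenerate case you said would not occur.
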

\begin{proof}
	We prove the statement for $D_r$ and $D_l$. The other cases are proven similarly.

	By Lemma~\ref{l:adj real}(1), we have $d_1=\ui{1}d_2$. Hence, by the form of the cell $\ui{1}\Pi$
	we have $d_1\sim \ui{1}c_1$ (see Figure~\ref{f:cell}). Similarly, by Lemma~\ref{l:adj real}(2),
	we have $d_1=\ui{j}d_{j+1}$, hence $d_1\sim \ui{j}c_j$, for $2\leqslant j\leqslant n-2$.
	By Lemma~\ref{lem:technical lemma}(1), we have that $\ui{j}c_j\sim \ui{k}c_k$, for $1\leqslant j < k \leqslant n-2$. Therefore, every two vertices in $D_r$ are adjacent.  

	By Lemma~\ref{l:adj real} (2)(3), we have $r^{-1}d_{n-1}=\ui{j}d_{j-1}$, for $2\leqslant j\leqslant n-1$.
		Hence, by the form of the cell $\ui{j}\Pi$
	we have $\ui{j}d_{j-1}\sim \ui{j}c_{j-1}$ (see Figure~\ref{f:cell}). 
	By Lemma~\ref{lem:technical lemma}(1)(2), we have 
	$\ui{j}c_{j-1}\sim \ui{k}c_{k-1}$, for $2\leqslant j<k\leqslant n-1$. Therefore, every two vertices in $D_l$ are adjacent.  	
\end{proof}
Now we show that $D_l$ and $D_r$ span a prism. This relies on the following lemmas.
\begin{lemma}
	\label{l:prism4}
	The only vertex in $D_l$ (resp.\ $D_r$) adjacent to $d_1$ (resp.\ $r^{-1}d_{n-1}$) is
	$\ui{n-1}c_{n-2}$ (resp.\ $\ui{1}c_1$).
\end{lemma}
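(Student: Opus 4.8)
The goal is to show that the only vertex in $D_l$ adjacent to $d_1$ is $\ui{n-1}c_{n-2}$, and dually that the only vertex in $D_r$ adjacent to $r^{-1}d_{n-1}$ is $\ui{1}c_1$. Recall $D_l=\{r^{-1}d_{n-1}\}\cup\{\ui{j}c_{j-1}\}_{j=2}^{n-1}$, so I must show $d_1\nsim r^{-1}d_{n-1}$, that $d_1\nsim \ui{j}c_{j-1}$ for $2\leqslant j\leqslant n-2$, and that $d_1\sim \ui{n-1}c_{n-2}$. The non-adjacency $d_1\nsim r^{-1}d_{n-1}$ is already recorded in Lemma~\ref{l:prism2}(1), since both are real vertices and no edges between real vertices are added in passing from $\Xb$ to $X$.

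For the main assertion I would use the identifications from Lemma~\ref{l:adj real}. The plan is: first, observe $d_1=\ui{n-1}r$ by Lemma~\ref{l:adj real}(3), so $d_1$ lies on the boundary of the cell $\ui{n-1}\Pi$ as its (right) tip; by the form of a cell (Figure~\ref{f:cell}) the only interior vertex of $\ui{n-1}\Pi$ adjacent to its right tip $r$ is $c_{n-2}$, giving $d_1\sim\ui{n-1}c_{n-2}$. Second, for $2\leqslant j\leqslant n-2$, Lemma~\ref{l:adj real}(2) gives $d_1=\ui{j}d_{j+1}$, so again $d_1$ lies on $\partial(\ui{j}\Pi)$, at the vertex $d_{j+1}$ of the lower half; and $\ui{1}l$, resp. $\ui{1}d_2 = d_1$ by Lemma~\ref{l:adj real}(1). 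Now I need that $\ui{j}c_{j-1}\nsim d_1$ for these $j$. Since $d_1=\ui{j}d_{j+1}$ and $\ui{j}c_{j-1}$ both live in $\ui{j}\Pi$, in a single cell $\ui{j}c_{j-1}$ is adjacent only to $\ui{j}c_{j-2},\ui{j}c_j$ among interior vertices and to the two boundary edges facing it, which are the $j$-th and $(j+1)$-th edges of the half; the vertex $d_{j+1}$ is an endpoint of the $(j+1)$-th edge, hence $\ui{j}c_{j-1}\sim\ui{j}d_{j+1}$ within the cell — wait, that would be adjacency. Let me recheck: in Figure~\ref{f:cell}, interior vertex $c_k$ spans triangles with edges $\ov{d_kd_{k+1}}$ and... actually $c_{j-1}$ faces edges whose endpoints are $d_{j-1},d_j,d_{j+1}$ (two consecutive triangles), so $c_{j-1}\sim d_{j+1}$ would indeed hold if $d_{j+1}$ is within range. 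The correct statement must be that $c_{j-1}$ is adjacent to $d_{j-1},d_j,d_{j+1}$? No — $c_k$ in a cell with labels $c_1,\ldots,c_{n-2}$ lies over the lower half $d_1,\ldots,d_{n-1}$ (or upper), with $c_k\sim d_k,d_{k+1}$ only (it spans one triangle with the edge $\ov{d_kd_{k+1}}$ and neighboring structure), and $c_k\sim c_{k-1},c_{k+1}$. So $c_{j-1}\sim d_{j-1},d_j$ but $c_{j-1}\nsim d_{j+1}$ when $j\geqslant 2$. Hence $\ui{j}c_{j-1}\nsim\ui{j}d_{j+1}=d_1$ inside the cell; and since $d_1$ is a real vertex lying on $\partial(\ui{j}\Pi)$, any edge from $d_1$ to an interior vertex of $\ui{j}\Pi$ must be an edge of that cell (new edges only connect interior vertices of two cells), so there is no such edge at all. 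This gives $d_1\nsim\ui{j}c_{j-1}$ for $2\leqslant j\leqslant n-2$, and for $j=n-1$ we computed adjacency separately via $d_1=\ui{n-1}r$.

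The $D_r$ statement is entirely symmetric: $r^{-1}d_{n-1}=\ui{1}l$ by Lemma~\ref{l:adj real}(1), so it is the left tip of $\ui{1}\Pi$, adjacent among interior vertices only to $\ui{1}c_1$; and by Lemma~\ref{l:adj real}(2)(3), $r^{-1}d_{n-1}=\ui{j}d_{j-1}$ for $2\leqslant j\leqslant n-1$, so it sits on $\partial(\ui{j}\Pi)$ at the vertex $d_{j-1}$, which faces $c_{j-2}$ and $c_{j-1}$ but not $c_j$; since $c_j\nsim d_{j-1}$ in the cell and no new edges join a real vertex to an interior vertex, $r^{-1}d_{n-1}\nsim\ui{j}c_j$ for $2\leqslant j\leqslant n-2$, while for $j=1$ we have the adjacency. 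I expect the only subtlety, hence the main point to get right, is the bookkeeping of which boundary vertices of a cell are ``faced'' by a given interior vertex together with the fact — which follows from the construction in Section~\ref{subsec:subividing and adding new edges}, where new edges are added only between interior vertices — that a real vertex on the boundary of a cell has no non-cell edges to that cell's interior vertices; once these are in hand the statement reduces to reading off Figure~\ref{f:cell}.
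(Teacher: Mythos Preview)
Your argument is correct and follows essentially the same route as the paper: use Lemma~\ref{l:prism2}(1) for the real-real nonadjacency, then the identifications $d_1=\ui{j}d_{j+1}$ (for $2\leqslant j\leqslant n-2$) and $d_1=\ui{n-1}r$ from Lemma~\ref{l:adj real} to read off (non)adjacency with $\ui{j}c_{j-1}$ from the cell picture, and symmetrically for $r^{-1}d_{n-1}$. You make explicit one point the paper leaves implicit---that any edge between a real vertex and an interior vertex must be a cell edge, since new edges connect only interior vertices---which is a helpful clarification; once you settled the bookkeeping (indeed $c_k\sim d_k,d_{k+1}$ and nothing else on that half), the proof is the same.
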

\begin{proof}
	By Lemma~\ref{l:prism2}(1), we have $d_1\nsim r^{-1}d_{n-1}$. For $2\leqslant j \leqslant n-2$, we have $d_1=\ui{j}d_{j+1}$ (by Lemma~\ref{l:adj real}(2)), hence $d_1\nsim \ui{j}c_{j-1}$.
	Since $d_1=\ui{n-1}r$ (by Lemma~\ref{l:adj real}(3)), we have $d_1\sim \ui{n-2}c_{n-2}$.
	It follows that $\ui{n-2}c_{n-2}$ is the only vertex in $D_l$ adjacent to $d_1$.
	The statement about $r^{-1}d_{n-1}$ and $D_r$ has an analogous proof.
\end{proof}
\begin{lemma}
	\label{l:prism5}
	Let $1\leqslant j,k \leqslant n-2$. Then $\ui{j}c_j \sim \ui{k+1}c_k$ 
	iff $j\leqslant k+1$.
\end{lemma}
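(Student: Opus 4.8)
The plan is to read the statement off directly from Lemma~\ref{lem:technical lemma}(1)--(2), splitting into the three cases $j=k+1$, $j<k+1$, and $j>k+1$. The case $j=k+1$ is immediate: then $\ui{k+1}c_k=\ui{j}c_{j-1}$ lies in the same cell $\ui{j}\Pi$ as $\ui{j}c_j$, and since $c_{j-1},c_j$ are consecutive interior vertices on the central segment of $\Pi$ they are joined by an edge, so $\ui{j}c_j\sim\ui{k+1}c_k$ --- the asserted conclusion in the boundary case of $j\leqslant k+1$.

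For $j<k+1$ I would apply Lemma~\ref{lem:technical lemma} to the pair of cells $\ui{j}\Pi,\ui{k+1}\Pi$, with $\ui{j}c_j$ in the role of the ``$\ui{i}c_i$'' of that lemma (our $j$ is the smaller index). If $k+1\leqslant n-2$, part (1) says that the interior vertices of $\ui{k+1}\Pi$ adjacent to $\ui{j}c_j$ are exactly $\ui{k+1}c_{k+1}$ and $\ui{k+1}c_{(k+1)-1}=\ui{k+1}c_k$; in particular $\ui{j}c_j\sim\ui{k+1}c_k$. If $k+1=n-1$ (i.e.\ $k=n-2$), part (2) says the unique such neighbour is $\ui{n-1}c_{n-2}=\ui{k+1}c_k$, so again $\ui{j}c_j\sim\ui{k+1}c_k$. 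This gives the ``if'' direction.

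For the ``only if'' direction, suppose $j>k+1$; as $j\leqslant n-2$ always, this forces $1\leqslant k+1<j\leqslant n-2$, and now Lemma~\ref{lem:technical lemma}(1) is applied with $\ui{j}c_j$ in the role of ``$\ui{j}c_j$'' (the larger index being $j$): it asserts that there are \emph{exactly two} interior vertices of $\ui{k+1}\Pi$ adjacent to $\ui{j}c_j$, namely $\ui{k+1}c_{k+1}$ and $\ui{k+1}c_{k+2}$ (the latter exists since $k+2\leqslant j\leqslant n-2$). As $\ui{k+1}c_k$ is an interior vertex of $\ui{k+1}\Pi$ and $k\notin\{k+1,k+2\}$, it is not among these two, so $\ui{j}c_j\nsim\ui{k+1}c_k$.

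I expect no genuine obstacle; the points needing care are: invoking part (2) rather than part (1) of Lemma~\ref{lem:technical lemma} when $k+1=n-1$; using the word ``exactly'' in part (1) (not merely the displayed list) to force non-adjacency when $j>k+1$; and recording once, via Corollary~\ref{cor:connected intersection}, that interior vertices of distinct cells are distinct and lie in no common cell, so that for $j\neq k+1$ any edge joining $\ui{j}c_j$ to $\ui{k+1}c_k$ must be one of the edges added to the pair $(\ui{j}\Pi,\ui{k+1}\Pi)$, all of which are enumerated by Lemma~\ref{lem:technical lemma}.
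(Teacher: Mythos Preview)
Your proof is correct and follows essentially the same approach as the paper: split into the three cases $j=k+1$, $j<k+1$, and $j>k+1$, handle the first by the adjacency $c_{j-1}\sim c_j$ in $\Pi$, and read the other two off Lemma~\ref{lem:technical lemma}(1)--(2). Your write-up is in fact more careful than the paper's about the boundary case $k+1=n-1$ and about why the ``exactly two'' clause forces non-adjacency when $j>k+1$.
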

\begin{proof}
	If $j=k+1$ then $\ui{j}c_j \sim \ui{k+1}c_k$
	since $c_{j-1}\sim c_{j}$ in $\Pi$. 
	If $j<k+1$ and $k+1\leqslant n-2$ then $u_j^{-1}c_j\sim u_{k+1}^{-1}c_k$, by 
	Lemma~\ref{lem:technical lemma} (1).
	If $j<k+1$ and $k+1= n-1$ then $u_j^{-1}c_j\sim u_{k+1}^{-1}c_k$, by 
	Lemma~\ref{lem:technical lemma} (2).
	If $j>k+1$ then $u_j^{-1}c_j\nsim u_{k+1}^{-1}c_k$, by 
	Lemma~\ref{lem:technical lemma} (1).
\end{proof}
\medskip

\noindent
\emph{Proof of Proposition~\ref{p:real link is a thick hexagon}.}
By Lemma~\ref{l:prism3}, graphs spanned by, respectively, $U_r,U_l,D_r$, and $D_l$ are complete, hence the condition (3) in Definition~\ref{def:thick hexagon} is satisfied for $\Gamma_V$. 
Now, we prove that the condition (4) from Definition~\ref{def:thick hexagon} holds, that is, $D_r$ and
$D_l$ span a prism. Clearly, the cardinalities of the two sets are equal to $n-1$.
We have to order appropriately (see Definition~\ref{def:prism} (3)) vertices of $D_r$
as $\{w_1,\ldots, w_{n-1}\}$.
We set $w_{n-1}=d_1$, and $w_j=\ui{j}c_{j}$, for $1\leqslant j\leqslant n-2$.
Define $W_{1}':=U_l$, and $W_j':=\{\ui{k}c_{k-1}\}_{k=2}^{j}$, for $2\leqslant j \leqslant n-1$.    
Then, by Lemma~\ref{l:prism4} and Lemma~\ref{l:prism5}, the set $W_j'$ is exactly the collection of vertices  
in $D_l$ that are adjacent to $w_j$. It follows that $D_r\cup D_l$ spans a prism $\prism{D_l}{D_r}$. 
Analogously one proves that $U_r\cup U_l$ spans a prism $\prism{U_l}{U_r}$, hence the condition (5) from
Definition~\ref{def:thick hexagon} holds. The properties (1) and (2) from the same definition 
hold by, respectively, Lemma~\ref{l:prism1} and Lemma~\ref{l:prism2}.
\hfill$\square$

Now we record several observations which will be used later in Section~\ref{s:general} when we glue the complexes for dihedral Artin groups together.

\begin{lemma}\
	\label{l:ain far from aout}
 We have
 \begin{align*}
 	d_{\Gamma_V}(r^{-1}u_{n-1},r^{-1}d_{n-1})=d_{\Gamma_V}(u_{1},d_1)=\\d_{\Gamma_V}(r^{-1}u_{n-1},u_1)=d_{\Gamma_V}(r^{-1}u_{n-1},d_1)=2,
 \end{align*} 
 and 
 \begin{align*}
 	d_{\Gamma_V}(r^{-1}d_{n-1},u_1)=d_{\Gamma_V}(r^{-1}u_{n-1},d_1)=3.
 \end{align*}

\end{lemma}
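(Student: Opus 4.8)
The plan is to read off all six distances from the model-graph structure of $\Gamma_V$ established in Proposition~\ref{p:real link is a model graph}. Recall the identifications: $u_1, d_1 \in U_r, D_r$ respectively, and $r^{-1}u_{n-1}, r^{-1}d_{n-1} \in U_l, D_l$ respectively. The upper bounds are the easy part. For $d_{\Gamma_V}(u_1,d_1)\leqslant 2$: both $u_1$ and $d_1$ are adjacent to $c_1$ by Lemma~\ref{l:prism1}(2) (equivalently $c_1\in\Pi$ is adjacent to $u_1=\di{1}u_2$ and $d_1=\ui{1}d_2$), and $u_1\nsim d_1$ by Lemma~\ref{l:prism2}(1), so the distance is exactly $2$. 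Symmetrically $d_{\Gamma_V}(r^{-1}u_{n-1},r^{-1}d_{n-1})=2$ via the common neighbor $r^{-1}c_{n-2}$ and Lemma~\ref{l:prism2}(1). For $d_{\Gamma_V}(r^{-1}u_{n-1},u_1)\leqslant 2$: these are both in $U_l\cup U_r$, which spans a prism $\prism{U_l}{U_r}$ by Proposition~\ref{p:real link is a model graph}; in a prism the ``top'' vertex of $U_l$ (the one adjacent to all of $U_r$) is $r^{-1}u_{n-1}=\di{j}u_{j-1}$ for the appropriate $j$ — more simply, Lemma~\ref{l:prism3}'s proof exhibits adjacencies, and one checks a common neighbor inside the prism (e.g. $\di{n-1}c_{n-2}\in U_r$ or similar) works; again $r^{-1}u_{n-1}\nsim u_1$ by Lemma~\ref{l:prism2}(1), giving distance exactly $2$. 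The case $d_{\Gamma_V}(r^{-1}u_{n-1},d_1)$ appears twice in the statement (once claimed $=2$, once $=3$); I expect the intended reading is $d_{\Gamma_V}(r^{-1}d_{n-1},u_1)=3$ and one of the listed equalities is $d_{\Gamma_V}(r^{-1}u_{n-1},u_1)$ — in any case the two ``cross'' distances between $U_l$ and $D_r$ (and between $D_l$ and $U_r$) are the ones equal to $3$.

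The lower bound $d_{\Gamma_V}(r^{-1}d_{n-1},u_1)\geqslant 3$ is the main point. Since $r^{-1}d_{n-1}\in D_l$ and $u_1\in U_r$, and Lemma~\ref{l:prism2} tells us there are no edges at all between $U_l\cup U_r$ and $D_l\cup D_r$, any path from $r^{-1}d_{n-1}$ to $u_1$ must leave the ``$D$-side'' and enter the ``$U$-side'', which can only happen by passing through one of the two vertices $\{c_1, r^{-1}c_{n-2}\}$ (these are, by Lemma~\ref{l:prism1}, the only vertices adjacent to vertices on both sides — more precisely $c_1$ is adjacent to $U_r\sqcup D_r$ and $r^{-1}c_{n-2}$ to $U_l\sqcup D_l$, and $c_1\nsim r^{-1}c_{n-2}$). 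Now $r^{-1}d_{n-1}\in D_l$ is adjacent to $r^{-1}c_{n-2}$ but not to $c_1$ (Lemma~\ref{l:prism1}(1),(2),(4)), while $u_1\in U_r$ is adjacent to $c_1$ but not to $r^{-1}c_{n-2}$ (Lemma~\ref{l:prism1}(1),(2)). A path of length $2$ would need a single common neighbor; it cannot be $c_1$ (not adjacent to $r^{-1}d_{n-1}$) nor $r^{-1}c_{n-2}$ (not adjacent to $u_1$), and it cannot lie in $U_l\cup U_r\cup D_l\cup D_r$ since no such vertex is adjacent to both a $D$-vertex and a $U$-vertex. Hence $d_{\Gamma_V}(r^{-1}d_{n-1},u_1)\geqslant 3$; combined with the explicit path $r^{-1}d_{n-1}\sim r^{-1}c_{n-2}\sim (\text{some }U_l\text{-vertex})\sim u_1$ or $r^{-1}d_{n-1}\sim(\text{some }D_r\text{-vertex})\sim c_1\sim u_1$ (using that $D_l,D_r$ span a prism so these sides connect, and $c_1\sim u_1$), we get equality $3$.

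So the key steps, in order, are: (i) recall the identifications of $u_1,d_1,r^{-1}u_{n-1},r^{-1}d_{n-1}$ as members of $U_r,D_r,U_l,D_l$ and the complete structure from Proposition~\ref{p:real link is a model graph}; (ii) for each ``same-side'' or ``adjacent-across-a-hub'' pair, exhibit an explicit common neighbor ($c_1$ or $r^{-1}c_{n-2}$ or a prism vertex) and cite Lemma~\ref{l:prism2}(1) for non-adjacency to get distance exactly $2$; (iii) for the ``opposite-corner'' pair, use Lemma~\ref{l:prism2} (no $U$--$D$ edges) plus Lemma~\ref{l:prism1} (the hubs $c_1,r^{-1}c_{n-2}$ are each adjacent to only one side, and are not adjacent to each other) to rule out any path of length $\leqslant 2$, then exhibit an explicit path of length $3$. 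The main obstacle is purely bookkeeping: being careful about which of the four distinguished real vertices lies on which side and adjacent to which hub, since $r^{-1}u_j$ and $r^{-1}d_j$ swap roles depending on the parity of $n-j$ (as noted in the proof of Lemma~\ref{l:prism1}); once the sides are pinned down, everything follows from the disjointness statement ``no $U$--$D$ edges'' and the two-hub structure of the model graph, with no genuine computation required.
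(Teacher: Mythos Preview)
Your proposal is correct and follows the same route as the paper: both read the distances off the model-graph structure of Proposition~\ref{p:real link is a model graph}, using Lemma~\ref{l:prism1}(2) and Lemma~\ref{l:prism2}(1) for the distance-$2$ pairs and the absence of $U$--$D$ edges together with the two-hub structure for the distance-$3$ pairs (the paper phrases the latter simply as ``a common neighbor would contradict Proposition~\ref{p:real link is a model graph}'', which you have unpacked explicitly). You also correctly spotted the typo in the statement --- one occurrence of $d_{\Gamma_V}(r^{-1}u_{n-1},d_1)$ in the first line should be $d_{\Gamma_V}(r^{-1}d_{n-1},d_1)$; one minor slip on your side is that the common neighbor $\di{n-1}c_{n-2}$ you name lies in $U_l$, not $U_r$, but it still does the job since $U_l$ is complete.
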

\begin{proof}
	The first statement follows by Lemma~\ref{l:prism1} (2) and Lemma~\ref{l:prism2} (1).
	It is clear that $d_{\Gamma_V}(r^{-1}d_{n-1},u_1),d_{\Gamma_V}(r^{-1}u_{n-1},d_1)\leqslant 3$.
	By Lemma~\ref{l:prism2} (1), we have $d_{\Gamma_V}(r^{-1}d_{n-1},u_1)\geqslant 2$, and $d_{\Gamma_V}(r^{-1}u_{n-1},d_1)\geqslant 2$. If we had $d_{\Gamma_V}(r^{-1}d_{n-1},u_1)=2$ then there
	would exist a vertex adjacent to both $r^{-1}d_{n-1}$, and $u_1$. This would contradict Proposition~\ref{p:real link is a thick hexagon}. Similarly one shows that $d_{\Gamma_V}(r^{-1}u_{n-1},d_1)=3$.
\end{proof}
\begin{remark}
	\label{rem:n2}	
	\begin{figure}[h]
		\centering
		\includegraphics[width=0.8\textwidth]{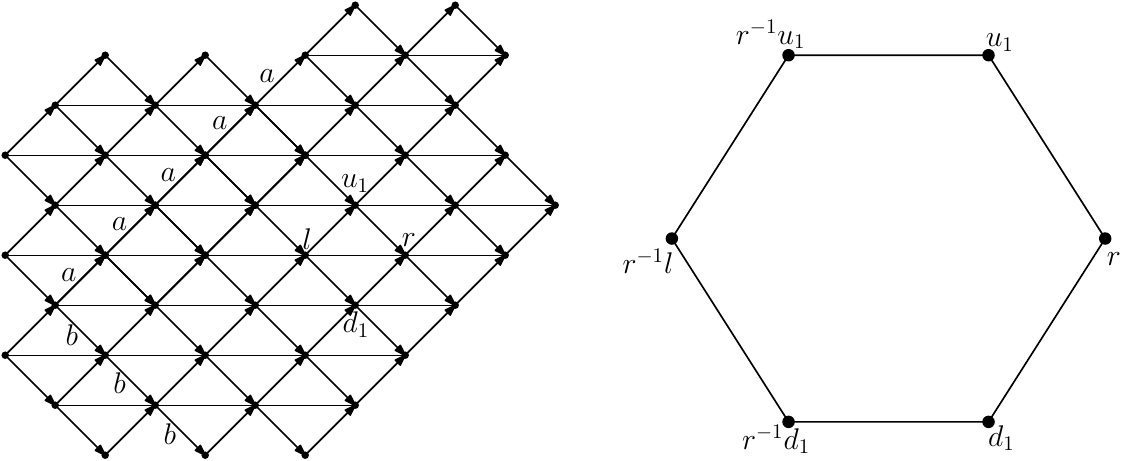}
		\caption{The complex $X$ in the case $n=2$ (a fragment on the left), and the link of its vertex (right).}
		\label{f:link_n2}
	\end{figure}
	The group $DA_2$ is isomorphic to $\mathbb Z^2$. To obtain the complex $X$ we add the diagonal 
	$\overline{lr}$ in every precell being the square (we do not add interior vertices $c_i$). 
	As a result, $X$ is isomorphic to the equilateral triangulation
	of the Euclidean plane, and links of vertices are $6$-cycles; see Figure~{\ref{f:link_n2}}. Note that
	it is a special case of a thick hexagon (see Definition~\ref{def:thick hexagon}), where all $U_l,U_r,D_l$, and $D_r$ reduce to single vertices,
	and hence the prisms $\prism{U_l}{U_r}$ etc.\ reduce to edges.
\end{remark}

Let $\Xa$ be as in Section~\ref{subsec:precells} with its edges labeled by the two generators $a$ and $b$ of $DA_n$. Recall that $\Gamma_V$ is the link of the identity vertex $l$ in $X^{(1)}$, and the real vertices of $X^{(1)}$ comes from the vertices of $\Xa$. Thus there are exactly four real vertices in $\Gamma_V$, two of them arise from incoming and outgoing $a$-edges at $l$, which we denoted by $a^+$ and $a^-$; and two of them arise from incoming and outgoing $b$-edges at $l$, which we denoted by $b^+$ and $b^-$. On the other hand, by Lemma~\ref{l:adj real}, the four real vertices in $\Gamma_V$ are described as $d_1,u_1,r^{-1}d_{n-1}$ and $r^{-1}u_{n-1}$. One readily verifies the identification $u_1=a^-$, $d_1=b^-$, $r^{-1}u_{n-1}=b^+$ and $r^{-1}d_{n-1}=a^+$ (see Figure~\ref{f:cell2} (C)). Thus we have the following result, where the first item follows from Lemma~\ref{l:ain far from aout} and the second item follows from Remark~\ref{rem:n2}
\begin{lemma}\
		\label{lem:distance}
\begin{enumerate}
	\item If $n\geqslant 3$, then $d_{\Gamma_V}(a^+,b^+)=d_{\Gamma_V}(a^+,b^-)=d_{\Gamma_V}(a^-,b^+)=d_{\Gamma_V}(a^-,b^-)=2$ and $d_{\Gamma_V}(a^+,a^-)=d_{\Gamma_V}(b^+,b^-)=3$;
	\item if $n=2$, then $d_{\Gamma_V}(a^+,b^+)=d_{\Gamma_V}(a^-,b^-)=2$, $d_{\Gamma_V}(a^+,b^-)=d_{\Gamma_V}(a^-,b^+)=1$ and $d_{\Gamma_V}(a^+,a^-)=d_{\Gamma_V}(b^+,b^-)=3$.
\end{enumerate}
\end{lemma}

\subsection{Link of an interior vertex}
Pick an interior vertex $c_i\in \Pi$ and we will fix $c_i$ for the rest of this subsection. Let $V$ be the set of vertices of $X$ that are adjacent to $c_i$, and let $\Gamma_V$ be the full subgraph of $X^{(1)}$ spanned by $V$. Our goal in this subsection is the following.
\begin{prop}
	\label{prop:interior vertex link is six-large}
The graph $\Gamma_V$ is $6$-large.
\end{prop}

First we characterize elements of $V$. They fall into two disjoint classes:
\begin{enumerate}[label=\Alph*]
	\item vertices in $\Pi$ that are adjacent to $c_i$;
	\item vertices outside $\Pi$ that are adjacent to $c_i$, they must be interior vertices of some cells other than $\Pi$.
\end{enumerate}
Class A consists of six vertices around $c_i$. There are two cases.
\begin{enumerate}
	\item The number $i$ is odd. Then $c_i$ is connected to $u_i$ and $d_{i+1}$ in the upper half of $\partial\Pi$, and $d_i$ and $u_{i+1}$ in the lower half of $\partial\Pi$. In such case $c_i$ is facing a $b$-edge in the upper half and an $a$-edge in the lower half. Moreover, $c_i$ is connected to $c_{i-1}$ or $l$ (when $i=1$) on the left, and $c_{i+1}$ or $r$ (when $i=n-2$) on the right.
	\item The number $i$ is even. Then $c_i$ is connected to $d_i$ and $u_{i+1}$ in the upper half of $\partial\Pi$, and $u_i$ and $d_{i+1}$ in the lower half of $\partial\Pi$. In such case $c_i$ is facing an $a$-edge in the upper half and a $b$-edge in the lower half. Moreover, $c_i$ is connected to $c_{i-1}$ on the left, and $c_{i+1}$ or $r$ (when $i=n-2$) on the right.
\end{enumerate}

Now we assume that $i$ is odd. The case of even $i$ is similar. Next we study vertices of class B. Let $w\Pi$ be a cell such that it contains interior vertices that are adjacent to $c_i$. By Lemma \ref{lem:technical lemma0} (1), $w\Pi\cap \Pi$ is a path of length $\geqslant 2$ such that it contains either $\overline{u_id_{i+1}}$ or $\overline{d_iu_{i+1}}$. There are two cases.
\begin{enumerate}
	\item If $\overline{u_id_{i+1}}\subset w\Pi\cap \Pi$, then $w\Pi\cap \Pi$ contains vertex $u_i$ and the $b$-edge emanating from $u_i$, therefore $w^{-1}u_i$ must be a vertex $u_j$ for some $j\neq i$ or $w^{-1}u_i=l$. Then $w=u_i\ui{j}$ for $0\leqslant j\leqslant n-1$ and $j\neq i$ (we set $u_0=l$). We define $p_j=u_i\ui{j}$.
	\item If $\overline{d_iu_{i+1}}\subset w\Pi\cap \Pi$, then similar to the previous case, we deduce that $w=d_i\di{j}$ for $0\leqslant j\leqslant n-1$ and $j\neq i$ (set $d_0=l$). We define $q_j=d_i\di{j}$.
\end{enumerate}

\begin{lemma}\
	\label{lem:vertices adjacent1}
For a fixed $c_i$, the following hold.
\begin{enumerate}
	\item There is only one vertex in $p_0\Pi$ adjacent to $c_i$, which is $p_0c_1$.
	\item Suppose $1\leqslant j<i\leqslant n-2$. Then there are exactly two vertices in $p_j\Pi$ adjacent to $c_i$, which are $p_jc_j$ and $p_jc_{j+1}$.
	\item Suppose $1\leqslant i<j\leqslant n-2$. Then there are exactly two vertices in $p_j\Pi$ adjacent to $c_i$, which are $p_jc_{j-1}$ and $p_jc_{j}$.
	\item There is only one vertex in $p_{n-1}\Pi$ adjacent to $c_i$, which is $p_{n-1}c_{n-2}$.
\end{enumerate}
\end{lemma}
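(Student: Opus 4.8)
The plan is to reduce the whole statement to Lemma~\ref{lem:technical lemma} by means of the $DA_n$--action; recall that by the discussion preceding the lemma the vertices in question are necessarily interior vertices of $p_j\Pi$. Translate the configuration by the element $u_i^{-1}$. Since $u_i$ is (identified with) a vertex of $\Pi$, the element $u_i^{-1}$ is a simplicial automorphism of $X$ carrying $\Pi$ to $\ui{i}\Pi$, the vertex $c_i$ to $\ui{i}c_i$, the cell $p_j\Pi=u_iu_j^{-1}\Pi$ to $u_j^{-1}\Pi=\ui{j}\Pi$ (with the convention $\ui{0}\Pi=\Pi$, as $u_0=l$), and each vertex $p_jc_k$ to $\ui{j}c_k$. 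Thus the assertion becomes the following local statement about the base cell and its standard neighbours: the interior vertices of $\ui{j}\Pi$ adjacent to $\ui{i}c_i$ are exactly those listed in (1)--(4), with each $p_j$ replaced by $\ui{j}$.

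Now I would treat the four cases. If $1\le i<j\le n-2$ (case (3)), the first sentence of Lemma~\ref{lem:technical lemma}(1) gives precisely the two interior vertices $\ui{j}c_j$ and $\ui{j}c_{j-1}$ of $\ui{j}\Pi$ adjacent to $\ui{i}c_i$, i.e.\ $p_jc_j$ and $p_jc_{j-1}$. If $1\le j<i\le n-2$ (case (2)), I apply Lemma~\ref{lem:technical lemma}(1) with the roles of $i$ and $j$ interchanged (legitimate, since then $1\le j<i\le n-2$); its second sentence now gives the two interior vertices $\ui{j}c_j$ and $\ui{j}c_{j+1}$ of $\ui{j}\Pi$ adjacent to $\ui{i}c_i$, i.e.\ $p_jc_j$ and $p_jc_{j+1}$. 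If $j=n-1$ (case (4)), then $1\le i<j=n-1$, so Lemma~\ref{lem:technical lemma}(2) applies directly and yields the single interior vertex $\ui{n-1}c_{n-2}=p_{n-1}c_{n-2}$ of $\ui{n-1}\Pi$ adjacent to $\ui{i}c_i$.

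The only case outside the scope of Lemma~\ref{lem:technical lemma} is $j=0$ (case (1)), where the ``smaller index'' would be $0$. Here $p_0=u_i$, so after the translation one must determine the interior vertices of $\ui{0}\Pi=\Pi$ adjacent to $\ui{i}c_i\in\ui{i}\Pi$. In $\Xb$ there are no edges between interior vertices of distinct cells, so any such edge was introduced by the zigzag scheme of Figure~\ref{fig:2} applied to the pair $(\Pi,\ui{i}\Pi)$, which lies in $\Lambda_0$ because $1\le i\le n-2$. By that scheme an interior vertex $c_a$ of $\Pi$ is joined to $\ui{i}c_b$ exactly when $b=a+i$ (for $1\le a\le n-2-i$) or $b=a+i-1$ (for $1\le a\le n-1-i$). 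Setting $b=i$ forces $a=0$ (not the index of an interior vertex) or $a=1$; since $1\le n-1-i$ always holds, the unique interior vertex of $\Pi$ adjacent to $\ui{i}c_i$ is $c_1$, that is $p_0c_1$. This completes the proof.

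I do not expect any serious obstacle: once the translation by $u_i^{-1}$ is in place, the lemma is bookkeeping on top of statements already proved. The two points that need care are (a) verifying that this translation identifies $p_j\Pi$ with the correct standard neighbour $\ui{j}\Pi$ and respects the $c$--labelling of interior vertices, and (b) reading the index ranges of Figure~\ref{fig:2} correctly in the $j=0$ case. Finally, the hypothesis that $i$ is odd is used only to have placed us in the $p_j$ (rather than the $q_j$) situation and plays no role in the computation above; the $q_j$ version is entirely analogous, with $u$'s replaced by $d$'s and Figure~\ref{fig:2} by Figure~\ref{fig:1}.
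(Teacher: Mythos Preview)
Your proof is correct and follows essentially the same approach as the paper: both translate by $u_i^{-1}$ to reduce to statements about $\ui{i}c_i$ and the standard neighbours $\ui{j}\Pi$, then invoke Lemma~\ref{lem:technical lemma}(1)(2) for cases (2)--(4) and read off case (1) from the zigzag scheme of Figure~\ref{fig:2}. Your treatment of the $j=0$ case is more explicit than the paper's (which simply ``recalls'' that $c_1$ is the only interior vertex of $\Pi$ adjacent to $\ui{i}c_i$), but the content is identical.
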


\begin{proof}
Recall that the only vertex in $\Pi$ adjacent to $\ui{i}c_i\in \ui{i}\Pi$ is $c_1$. Thus (1) follows by applying the action of $u_i$.
For (2), we apply Lemma \ref{lem:technical lemma} (1) with $i$ and $j$ interchanged to deduce that $\ui{i}c_i$ is adjacent to $\ui{j}c_j$ and $\ui{j}c_{j+1}$. Then (2) follows by applying the action of $u_i$ (recall that $p_j=u_i\ui{j}$). Assertion (3) follows from Lemma \ref{lem:technical lemma} (1) in a similar way, and (4) follows from Lemma \ref{lem:technical lemma} (2).
\end{proof}

The following lemma can be proved in a similar way to Lemma \ref{lem:vertices adjacent1}, using Lemma \ref{lem:technical lemma} (3) and (4).
\begin{lemma}
	 \label{lem:vertices adjacent2}
Lemma \ref{lem:vertices adjacent1} still holds with $p$ replaced by $q$.
\end{lemma}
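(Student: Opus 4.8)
The plan is to run the proof of Lemma~\ref{lem:vertices adjacent1} essentially verbatim, with the roles of the upper and lower halves of $\partial\Pi$ interchanged: every appearance of $\ui{\bullet}$ is replaced by $\di{\bullet}$, and parts (3) and (4) of Lemma~\ref{lem:technical lemma} are invoked in place of parts (1) and (2). Since $q_j=d_i\di{j}$ (with $d_0=l$), the first move is to reduce each of the four assertions to a statement about $\Pi$ and its translates by powers of $d$: because $DA_n$ acts simplicially on $X$, the interior vertices of $q_j\Pi=d_i\di{j}\Pi$ adjacent to $c_i\in\Pi$ are precisely the $d_i$--images of the interior vertices of $\di{j}\Pi$ adjacent to $\di{i}c_i\in\di{i}\Pi$. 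So it suffices, for each $j\neq i$, to list the interior vertices of $\di{j}\Pi$ adjacent to $\di{i}c_i$ and then translate back by $d_i$.

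Carrying this out: for (1) we have $j=0$, $q_0=d_i$, and $\di{0}\Pi=\Pi$; by the scheme for adding edges between $\Pi$ and $\di{i}\Pi$ (Figure~\ref{fig:1}, together with Lemma~\ref{lem:technical lemma0}) the only vertex of $\Pi$ adjacent to $\di{i}c_i$ is $c_1$, hence $q_0c_1$ is the unique vertex of $q_0\Pi$ adjacent to $c_i$. For (2) with $1\leqslant j<i\leqslant n-2$ and (3) with $1\leqslant i<j\leqslant n-2$, I would apply Lemma~\ref{lem:technical lemma}(3) with $a=\min(i,j)$ and $b=\max(i,j)$: it yields that the two interior vertices of $\di{j}\Pi$ adjacent to $\di{i}c_i$ are $\di{j}c_j,\di{j}c_{j+1}$ when $j<i$, and $\di{j}c_{j-1},\di{j}c_j$ when $j>i$; pushing forward by $d_i$ gives the claimed pairs $q_jc_j,q_jc_{j+1}$ and $q_jc_{j-1},q_jc_j$. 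For (4), where $j=n-1$, the same manipulation with Lemma~\ref{lem:technical lemma}(4) in place of (3) shows $\di{i}c_i$ is adjacent to the single interior vertex $\di{n-1}c_{n-2}$, so $q_{n-1}c_{n-2}$ is the only such vertex of $q_{n-1}\Pi$.

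I do not expect a real obstacle. The only points that require a moment's care are that no \emph{boundary} vertex of $q_j\Pi$ is adjacent to $c_i$ --- which holds because new edges are added only between interior vertices, reducing the question to the cell combinatorics recorded in Lemma~\ref{lem:technical lemma0}, exactly as for the $p_j$'s --- and that the translation by $d_i$ places us in precisely the hypotheses of Lemma~\ref{lem:technical lemma}(3)--(4), which is immediate from the standing assumptions that $i$ is odd and $\overline{d_iu_{i+1}}\subset q_j\Pi\cap\Pi$. Since every step is the verbatim analogue of one already carried out in the proof of Lemma~\ref{lem:vertices adjacent1}, the argument is short.
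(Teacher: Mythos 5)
Your proposal is correct and is essentially the paper's own argument: the paper proves Lemma~\ref{lem:vertices adjacent2} by noting it follows "in a similar way" to Lemma~\ref{lem:vertices adjacent1}, translating by $d_i$ (since $q_j=d_i\di{j}$) and invoking Lemma~\ref{lem:technical lemma}(3)--(4) in place of (1)--(2), exactly as you do. Your index bookkeeping for the cases $j<i$, $j>i$, $j=0$, and $j=n-1$ checks out against the statements of Lemma~\ref{lem:technical lemma}.
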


We define the following mutually disjoint collections of vertices:
\begin{itemize}
	\item $U_l=\{u_i\}\cup\{p_jc_j\}_{j=1}^{i-1}\cup\{p_jc_{j-1}\}_{j=i+1}^{n-1}$;
	\item $U_r=\{d_{i+1}\}\cup\{p_jc_{j+1}\}_{j=0}^{i-1}\cup\{p_jc_j\}_{j=i+1}^{n-2}$;
	\item $D_l=\{d_i\}\cup\{q_jc_j\}_{j=1}^{i-1}\cup\{q_jc_{j-1}\}_{j=i+1}^{n-1}$;
	\item $D_r=\{u_{i+1}\}\cup\{q_jc_{j+1}\}_{j=0}^{i-1}\cup\{q_jc_j\}_{j=i+1}^{n-2}$;
\end{itemize}
By Lemma \ref{lem:vertices adjacent1} and Lemma \ref{lem:vertices adjacent2}, $V=\{c_{i-1},c_{i+1}\}\cup U_l\cup U_r\cup D_l\cup D_r$ (when $i=1$, let $c_{i-1}=l$, when $i=n-2$, let $c_{i+1}=r$). Then Proposition \ref{prop:interior vertex link is six-large} is a consequence of the following result and Lemma \ref{lem:thick hexagon is six-large}.
\begin{figure}[h!]
	\centering
	\includegraphics[width=1\textwidth]{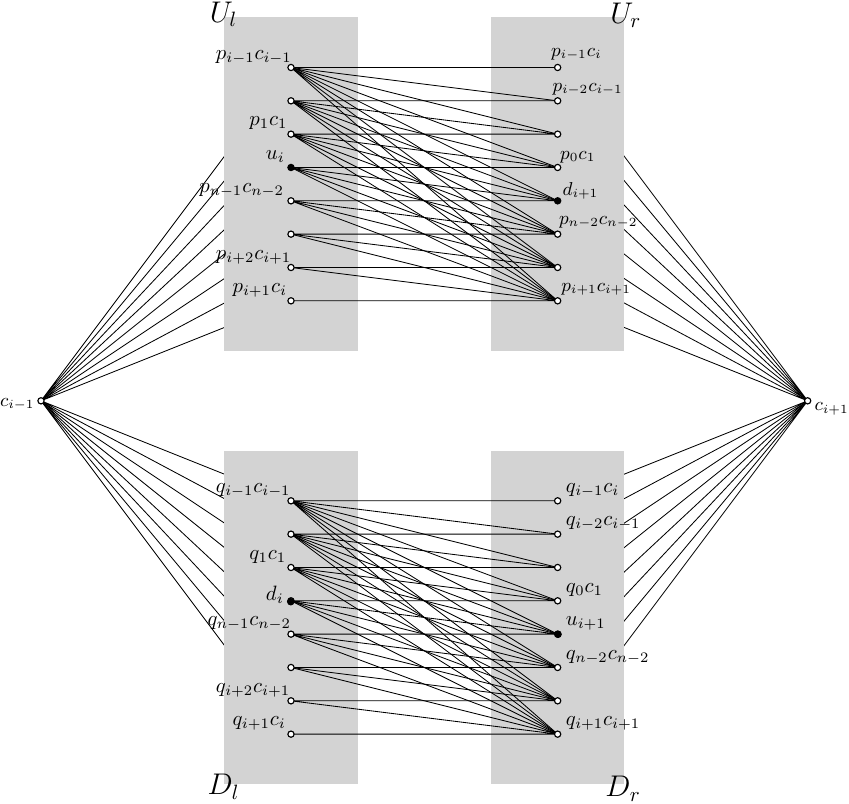}
	\caption{The structure of the link $\Gamma_V$ of the interior vertex $c_i$, the case $n=9$. (Edges in the complete
		graphs $U_r,U_l,D_r$, and $D_l$ are not shown.)}
	\label{f:interior_link}
\end{figure}

\begin{prop}
	\label{prop:interior link is a thick hexagon}
With the above definition of $U_l,U_r,D_l$ and $D_r$, the graph $\Gamma_V$ satisfies each condition of Definition \ref{def:thick hexagon} with $c_l$ replaced by $c_{i-1}$ and $c_r$ replaced by $c_{i+1}$.
\end{prop}

The rest of this section is devoted to the proof of Proposition \ref{prop:interior link is a thick hexagon}.

\begin{lemma} Set $c_{n-1}=r$ and $c_0=l$.
\begin{enumerate}
	\item We have $c_{i-1}\nsim p_0c_1$ and $c_{i+1}\sim p_0c_1$.
	\item For $1\leqslant j<i\leqslant n-2$, $c_{i-1}\sim p_jc_j$, $c_{i-1}\nsim p_jc_{j+1}$, $c_{i+1}\nsim p_jc_j$, and $c_{i+1}\sim p_jc_{j+1}$.
	\item For $1\leqslant i<j\leqslant n-2$, $c_{i-1}\sim p_jc_{j-1}$, $c_{i-1}\nsim p_jc_{j}$, $c_{i+1}\nsim p_jc_{j-1}$, and $c_{i+1}\sim p_jc_{j}$.
	\item We have $c_{i-1}\sim p_{n-1}c_{n-2}$ and $c_{i+1}\nsim p_{n-1}c_{n-2}$.
\end{enumerate}
Moreover, all the statements still hold with $p$ replaced by $q$. As a consequence, the collection of vertices in $V$ that are adjacent to $c_{i-1}$ (resp.\ $c_{i+1}$) is $U_l\cup D_l$ (resp.\ $U_r\cup D_r$).
\end{lemma}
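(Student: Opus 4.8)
The plan is to follow the template of the proof of Lemma~\ref{l:prism1}, with $c_i$ playing the role of the real vertex $l$ there. For each cell $p_j\Pi$ occurring in Lemma~\ref{lem:vertices adjacent1}, the pair $(\Pi,p_j\Pi)$ lies in $\Lambda$ and hence is a $DA_n$--translate of a pair $(\Pi,\ui{m}\Pi)$ or $(\Pi,\di{m}\Pi)$, so Lemma~\ref{lem:technical lemma0} and Lemma~\ref{lem:technical lemma} apply to it after translating. Since $i$ is odd, the edge of $\partial\Pi$ facing $c_i$ from above is $\overline{u_id_{i+1}}$, which by definition is the edge shared by $\Pi$ with every $p_j\Pi$; the two edges of $\partial\Pi$ facing $c_{i-1}$ are $\overline{d_{i-1}u_i}$ in the upper half (one step to the left of $\overline{u_id_{i+1}}$) and an edge in the lower half, and symmetrically $c_{i+1}$ faces an edge one step to the right in the upper half and one in the lower half. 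By Corollary~\ref{cor:connected intersection}, $p_j\Pi\cap\Pi$ is a connected subpath of the upper half of $\partial\Pi$ with one endpoint a tip of $\Pi$, i.e.\ an initial or terminal segment of that half; since it lies in the upper half, the lower edges facing $c_{i\pm1}$ are irrelevant, so by Lemma~\ref{lem:technical lemma0}(1) the vertex $c_{i-1}$ (resp.\ $c_{i+1}$) has a neighbour among the interior vertices of $p_j\Pi$ precisely when this segment extends one step to the left (resp.\ right) of $\overline{u_id_{i+1}}$.

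The second step is to compute the segment $p_j\Pi\cap\Pi$ in each of the four ranges $j=0$, $1\leqslant j<i$, $i<j\leqslant n-2$, $j=n-1$ (exactly as in the paragraph preceding Lemma~\ref{lem:vertices adjacent1}), and, whenever the above criterion gives a neighbour, to identify it by reading off the zigzag edge scheme between $\Pi$ and $p_j\Pi$ (a translate of Figures~\ref{fig:2} and~\ref{fig:1}) and intersecting with the at most two interior vertices of $p_j\Pi$ in $V$ listed in Lemma~\ref{lem:vertices adjacent1}. This yields the relations (1)--(4). The degenerate cases $i=1$ (where $c_{i-1}=l$) and $i=n-2$ (where $c_{i+1}=r$), in which the vertex in question is real and Lemma~\ref{lem:technical lemma0} does not apply, are dealt with directly from the description of the link of the real vertex $l$ in Lemmas~\ref{l:vertices adj0}--\ref{l:vertices adj2}, using that no new edge of $X$ joins a real vertex to an interior one. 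The statements with $p$ replaced by $q$ hold by the same argument with the upper and lower halves of $\Pi$ interchanged; equivalently, they follow by applying the simplicial automorphism of $X$ induced by the automorphism $a\leftrightarrow b$ of $DA_n$, which fixes $\Pi$ and $c_i$ and interchanges $u_k\leftrightarrow d_k$, $p_j\Pi\leftrightarrow q_j\Pi$ and $U_l,U_r\leftrightarrow D_l,D_r$.

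For the concluding assertion, recall that $V=\{c_{i-1},c_{i+1}\}\sqcup U_l\sqcup U_r\sqcup D_l\sqcup D_r$ by Lemma~\ref{lem:vertices adjacent1} and Lemma~\ref{lem:vertices adjacent2}, and that $c_{i-1}\nsim c_{i+1}$ since new edges of $X$ join only interior vertices of distinct cells. Of the four real vertices adjacent to $c_i$, namely $u_i\in U_l$, $d_i\in D_l$, $d_{i+1}\in U_r$ and $u_{i+1}\in D_r$, the shape of $\Pi$ shows that $c_{i-1}\sim u_i,d_i$ and $c_{i-1}\nsim d_{i+1},u_{i+1}$, and symmetrically that $c_{i+1}\sim d_{i+1},u_{i+1}$ and $c_{i+1}\nsim u_i,d_i$. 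Combining these facts with (1)--(4) and their $q$--analogues shows that the vertices of $V$ adjacent to $c_{i-1}$ are exactly those of $U_l\cup D_l$ and those adjacent to $c_{i+1}$ are exactly those of $U_r\cup D_r$. The main obstacle is the bookkeeping in the second step: pinning down the segments $p_j\Pi\cap\Pi$ and the translated edge schemes across all of the ranges while keeping track of the parity-dependent $u/d$ labelling of $\partial\Pi$, and making sure the two degenerate endpoint cases are not overlooked.
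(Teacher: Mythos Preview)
Your proposal is correct and follows essentially the same route as the paper: translate each pair $(\Pi,p_j\Pi)$ back to a standard pair $(\Pi,\ui{m}\Pi)$ or $(\Pi,\di{m}\Pi)$ and read off the required (non-)adjacencies from the zigzag edge scheme, which is exactly what the paper does via Lemma~\ref{lem:technical lemma} and Figures~\ref{fig:3}--\ref{fig:4}. The only presentational differences are that you interpose Lemma~\ref{lem:technical lemma0}(1) as an existence criterion before identifying the specific neighbour, and that you handle the $q$--case by the $a\leftrightarrow b$ automorphism and the degenerate cases $i=1,n-2$ via the real-vertex link lemmas, whereas the paper treats those boundary cases within the same translated-zigzag framework (e.g.\ computing $u_j u_{n-2}^{-1}r=u_{j+2}$ directly).
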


\begin{proof}
For (1), it follows from the fact that the zigzag pattern between $\ui{i}\Pi$ and $\Pi$ is as in Figure \ref{fig:2} that $c_1\sim u^{-1}_ic_{i+1}$ and $c_1\nsim \ui{i}c_{i-1}$. Then (1) follows by applying the action of $u_i$. Now we prove (2). If $i<n-2$, then we apply Lemma \ref{lem:technical lemma} (1) with $i$ and $j$ interchanged to deduce that $\ui{i}c_i\sim \ui{j}c_j$ and $\ui{i}c_i\sim \ui{j}c_{j+1}$. By the zigzag pattern (see Definition~\ref{def:construct X} and Figure \ref{fig:3} left), we know $\ui{i}c_{i-1}\sim \ui{j}c_j$, $\ui{i}c_{i-1}\nsim \ui{j}c_{j+1}$, $\ui{i}c_{i+1}\nsim\ui{j}c_j$ and $\ui{i}c_{i+1}\sim\ui{j}c_{j+1}$. Thus (2) follows by applying the action of $u_i$. If $i=n-2$, then $u_ju^{-1}_{n-2}r=u_{j+2}$. Thus $u^{-1}_{n-2}r=\ui{j}u_{j+2}$ is connected to $\ui{j}c_{j+1}$. Then we have a similar zigzag pattern as in Figure \ref{fig:3} right. Assertion (3) is similar to (2) (since $i<j$, we have a zigzag pattern as in Figure \ref{fig:4}), and (4) follows from Lemma~\ref{lem:technical lemma} (2).
\begin{figure}[ht!]
	\begin{center}
		\begin{tikzpicture}
		\node at (1.5,-0.3) {$\ui{i}c_{i-1}$};
		\draw [thick] (1.5,0) -- (3,2);
		\node at (3,2.3) {$\ui{j}c_j$};
		\draw [thick] (3,2) -- (3,0);
		\node at (3,-0.3) {$\ui{i}c_{i}$};
		\draw [thick] (3,0) -- (4.5,2);
		\node at (4.5,2.3) {$\ui{j}c_{j+1}$};
		\draw [thick] (4.5,2) -- (4.5,0);
		\node at (4.5,-0.3) {$\ui{i}c_{i+1}$};
		\node at (7.2,-0.3) {$\ui{n-2}c_{n-3}$};
		\draw [thick] (7.5,0) -- (9,2);
		\node at (9,2.3) {$\ui{j}c_j$};
		\draw [thick] (9,2) -- (9,0);
		\node at (9,-0.3) {$\ui{n-2}c_{n-2}$};
		\draw [thick] (9,0) -- (10.5,2);
		\node at (10.5,2.3) {$\ui{j}c_{j+1}$};
		\draw [thick] (10.5,2) -- (10.5,0);
		\node at (10.5,-0.3) {$\ui{n-2}r$};
		\end{tikzpicture}
	\end{center} 
	\caption{}\label{fig:3}
\end{figure}
\begin{figure}[ht!]
	\begin{center}
		\begin{tikzpicture}
		\node at (1.5,-0.3) {$\ui{i}c_{i-1}$};
		\draw [thick] (1.5,0) -- (1.5,2);
		\node at (1.5,2.3) {$\ui{j}c_{j-1}$};
		\draw [thick] (1.5,2) -- (3,0);
		\node at (3,-0.3) {$\ui{i}c_{i}$};
		\draw [thick] (3,0) -- (3,2);
		\node at (3,2.3) {$\ui{j}c_{j}$};
		\draw [thick] (3,2) -- (4.5,0);
		\node at (4.5,-0.3) {$\ui{i}c_{i+1}$};
		\node at (7.5,-0.3) {$\ui{1}l$};
		\draw [thick] (7.5,0) -- (7.5,2);
		\node at (7.5,2.3) {$\ui{j}c_{j-1}$};
		\draw [thick] (7.5,2) -- (9,0);
		\node at (9,-0.3) {$\ui{1}c_{1}$};
		\draw [thick] (9,0) -- (9,2);
		\node at (9,2.3) {$\ui{j}c_{j}$};
		\draw [thick] (9,2) -- (10.5,0);
		\node at (10.5,-0.3) {$\ui{1}c_{2}$};
		\end{tikzpicture}
	\end{center} 
	\caption{}\label{fig:4}
\end{figure}
\end{proof}

\begin{lemma}
Let $v\in U_l\cup U_r$ and $v'\in D_l\cup D_r$. Then $v$ and $v'$ are not adjacent.
\end{lemma}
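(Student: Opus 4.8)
The plan is to show that no vertex in $U_l \cup U_r$ can be adjacent to a vertex in $D_l \cup D_r$, which is exactly condition (2) in Definition~\ref{def:model graph} for the link $\Gamma_V$ of the interior vertex $c_i$. The argument should parallel Lemma~\ref{l:prism2} from the real-vertex case, since the two setups are structurally identical. First I would split the vertices of $U_l \cup U_r$ and $D_l \cup D_r$ into the ``real'' part and the ``interior'' part: $U_l \cup U_r$ consists of the real vertices $u_i, d_{i+1}$ together with interior vertices of the cells $p_j\Pi$, while $D_l \cup D_r$ consists of the real vertices $d_i, u_{i+1}$ together with interior vertices of the cells $q_j\Pi$. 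Adjacency between two real vertices is impossible since passing from $\Xb$ to $X$ adds no new edges between real vertices (and $u_i, d_{i+1}, d_i, u_{i+1}$ span no edges in $\Xb$ beyond those meeting $c_i$). Adjacency between a real vertex of one family and an interior vertex of the other (e.g.\ $u_i$ versus $q_j c_k$) is handled by observing that new edges are only added between interior vertices of cells, so $u_i \sim q_j c_k$ would force $u_i \in q_j \Pi$; I would then rule this out using Corollary~\ref{cor:connected intersection}~(2), exactly as in the proof of Lemma~\ref{l:prism2}~(2)–(3), since the intersection of $q_j\Pi$ with $\Pi$ lies in a single half of $\partial \Pi$ and $u_i$ lies in the ``wrong'' half relative to $d_i, u_{i+1}$.

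The remaining and main case is adjacency between an interior vertex of some $p_j\Pi$ (a vertex of $U_l \cup U_r$) and an interior vertex of some $q_k\Pi$ (a vertex of $D_l \cup D_r$). The key observation, coming from the description of class-B vertices just above Lemma~\ref{lem:vertices adjacent1}, is that $p_j\Pi \cap \Pi$ is a path of length $\geqslant 2$ containing the edge $\overline{u_i d_{i+1}}$, which lies in the \emph{upper} half of $\partial \Pi$, whereas $q_k\Pi \cap \Pi$ is a path of length $\geqslant 2$ containing $\overline{d_i u_{i+1}}$, which lies in the \emph{lower} half of $\partial \Pi$. (One should double-check the parity: when $i$ is odd the relevant half assignments are as stated; for even $i$ the roles of the two edges swap, but the two cells still meet $\Pi$ in opposite halves, so the argument is unchanged.) Now Corollary~\ref{cor:disjoint} applies directly with $\Pi_2 = \Pi$, $\Pi_1 = p_j\Pi$, $\Pi_3 = q_k\Pi$: since $p_j\Pi$ meets $\Pi$ in a nontrivial path in one half and $q_k\Pi$ meets $\Pi$ in a nontrivial path in the other half, the intersection $p_j\Pi \cap q_k\Pi$ is empty or a single point. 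In particular the two cells share no edge, hence no interior vertex of $p_j\Pi$ is adjacent to an interior vertex of $q_k\Pi$ (interior vertices are connected to vertices of another cell only via the zigzag edges added between cells sharing at least two edges, or via edges already present inside a common cell — neither is possible here).

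I would write the proof in two short paragraphs mirroring Lemma~\ref{l:prism2}: the first disposing of the cases involving at least one real vertex via Corollary~\ref{cor:connected intersection}~(2) and the no-new-real-edges observation, and the second handling the interior–interior case via Corollary~\ref{cor:disjoint}. The one point requiring care — and the step I expect to be the main (minor) obstacle — is making sure the ``upper half versus lower half'' assignment is stated correctly and uniformly, including the even-$i$ case, so that Corollary~\ref{cor:disjoint} can be invoked cleanly; this is bookkeeping about which boundary edge of $\Pi$ faces $c_i$ on each side, already recorded in the two displayed cases preceding Lemma~\ref{lem:vertices adjacent1}. Once that is in place, everything reduces to citing the two corollaries and the remark that the systolization procedure introduces edges only between interior vertices of cells that share at least two edges.
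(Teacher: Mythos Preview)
Your proposal is correct and follows essentially the same approach as the paper's own proof: the same three-way case split (both real, both interior, mixed), with the interior--interior case handled via Corollary~\ref{cor:disjoint} using that $p_j\Pi$ and $q_k\Pi$ meet $\Pi$ in opposite halves, and the mixed case handled via Corollary~\ref{cor:connected intersection}~(2). The paper is slightly terser (it dismisses the real--real case as ``clear'' and does not dwell on the even-$i$ parity check), but the substance is identical.
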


\begin{proof}
We first look at the case when $v$ and $v'$ are interior vertices. Suppose $v\in w\Pi$ and $v'\in w'\Pi$. Recall that $\overline{u_id_{i+1}}\subset w\Pi$ and $\overline{d_iu_{i+1}}\subset w'\Pi$. Thus $w\Pi\cap\Pi$ (resp.\ $w'\Pi\cap\Pi$) is contained in the upper (resp.\ lower) half of $\partial\Pi$ by Corollary \ref{cor:connected intersection} (2). Then Corollary \ref{cor:disjoint} implies $w\Pi\cap w'\Pi$ is at most one point, thus $v$ and $v'$ are not adjacent.

The case where neither of $v$ and $v'$ is interior is clear. It remains to consider the case when only one of $v$ and $v'$, say $v'$, is interior. Each real vertex adjacent to $v'$ is inside $\partial(w'\Pi)$. However, $w'\Pi\cap \Pi$ is contained in the lower half of $\partial\Pi$, thus $v$ and $v'$ are not adjacent.
\end{proof}

\begin{lemma}
Every two vertices in $U_l$ are connected by an edge. The same is true for $U_r,D_l$ and $D_r$.
\end{lemma}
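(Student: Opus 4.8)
The plan is to follow the proof of Lemma~\ref{l:prism3} step for step, now in the link of the interior vertex $c_i$ rather than of a real vertex; this lemma plays the same role for Proposition~\ref{prop:interior link is a model graph} that Lemma~\ref{l:prism3} played for Proposition~\ref{p:real link is a model graph}. For each of the four sets I would first check that its unique real vertex is joined to all of its interior vertices, and then that the interior vertices form a clique. In both steps the device is the same: translate the set by a suitable element of $DA_n$ so that it takes the ``normal form'' $\{g\}\cup\{u_j^{-1}c_\bullet\}$ (or $\{g\}\cup\{d_j^{-1}c_\bullet\}$) on which Lemma~\ref{lem:technical lemma} is phrased, and then the required edges are immediate from that lemma and from the adjacencies inside a single cell (Figure~\ref{f:cell}).

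Concretely, for $U_l=\{u_i\}\cup\{p_jc_j\}_{j=1}^{i-1}\cup\{p_jc_{j-1}\}_{j=i+1}^{n-1}$ I would apply $u_i^{-1}$. Because $p_j=u_iu_j^{-1}$, this sends $u_i$ to the identity vertex $l$, sends $p_jc_j$ to $u_j^{-1}c_j$, and sends $p_jc_{j-1}$ to $u_j^{-1}c_{j-1}$; since adjacency is $DA_n$--invariant it suffices to see that $\{l\}\cup\{u_j^{-1}c_j\}_{j=1}^{i-1}\cup\{u_j^{-1}c_{j-1}\}_{j=i+1}^{n-1}$ spans a complete graph. For the edges from $l$: the vertex $l$ is the boundary vertex $u_j^{-1}u_j$ of the cell $u_j^{-1}\Pi$, and inside $\Pi$ the vertex $u_j$ is adjacent to both $c_j$ and $c_{j-1}$ (as $c_k\sim u_k,u_{k+1}$ for every $k$), so $l$ is adjacent to every $u_j^{-1}c_j$ and every $u_j^{-1}c_{j-1}$. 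For an edge between two interior vertices I would split into three cases according to whether both indices lie in $\{1,\dots,i-1\}$, both in $\{i+1,\dots,n-1\}$, or one on each side of $i$: for indices $j<k\le n-2$ each of these is a direct instance of Lemma~\ref{lem:technical lemma}(1) --- it gives $u_j^{-1}c_j\sim u_k^{-1}c_k$, and $u_j^{-1}c_{j-1}\sim u_k^{-1}c_{k-1}$ (using $j\ge 2$), and $u_j^{-1}c_j\sim u_k^{-1}c_{k-1}$ in the three respective cases --- while if the larger index equals $n-1$ (so $c_{n-1}=r$ and one of the two zigzag edges degenerates at the right tip) the same edges come from Lemma~\ref{lem:technical lemma}(2).

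The remaining three sets would be handled the same way. The automorphism of $DA_n$ interchanging the two standard generators conjugates the $p_j$ to the $q_j$, hence carries $U_l$ to $D_l$ and $U_r$ to $D_r$, so it is enough to treat $U_r=\{d_{i+1}\}\cup\{p_jc_{j+1}\}_{j=0}^{i-1}\cup\{p_jc_j\}_{j=i+1}^{n-2}$, which after a translation bringing it to normal form again reduces, pair by pair, to Lemma~\ref{lem:technical lemma}; and the case of even $i$ for all four sets follows from the case of odd $i$ by the same generator-swap, just as the text indicates for the surrounding lemmas. I expect the one genuine difficulty to be the bookkeeping: one must verify that after translation the index patterns line up exactly with the hypotheses of Lemma~\ref{lem:technical lemma} --- in particular that the ``straddling'' pairs are precisely the pairs of interior vertices of two cells joined by a zigzag edge, and that the degenerate boundary cases, where $c_0=l$ or $c_{n-1}=r$ intervenes, remain covered by part~(2) of that lemma and by the cell structure. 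Once those matchings are pinned down the proof is just a sequence of citations of Lemma~\ref{lem:technical lemma}, exactly as in Lemma~\ref{l:prism3}.
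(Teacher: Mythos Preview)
Your plan is correct and is essentially the paper's own argument, reorganized: you translate once by $u_i^{-1}$ and then cite Lemma~\ref{lem:technical lemma} and the cell adjacencies, whereas the paper applies $u_i$ (or $p_j$) piece by piece and cites the same lemma. Two small points are worth tightening.

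First, for $U_r$ not every interior--interior edge is literally an instance of Lemma~\ref{lem:technical lemma}. After translating you need, for $0\le j<k\le i-1$, the edge $u_j^{-1}c_{j+1}\sim u_k^{-1}c_{k+1}$; Lemma~\ref{lem:technical lemma}(1) only records the four edges at the ``center'' of the zigzag between $u_j^{-1}\Pi$ and $u_k^{-1}\Pi$, and this particular edge is one step to the right of those. The paper handles it by appealing directly to the zigzag pattern (equivalently: translate the pair $(u_j^{-1}\Pi,u_k^{-1}\Pi)$ by $u_j$ back to $(\Pi,\,u_ju_k^{-1}\Pi)$ and read the edge off the construction in Section~\ref{subsec:subividing and adding new edges}). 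So your sentence ``again reduces, pair by pair, to Lemma~\ref{lem:technical lemma}'' should really say ``to Lemma~\ref{lem:technical lemma} together with the explicit zigzag between $\Pi$ and $u_m^{-1}\Pi$ (resp.\ $d_m^{-1}\Pi$)''.

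Second, the generator-swap automorphism fixes every $c_i$ and exchanges $u_j\leftrightarrow d_j$; hence for a \emph{fixed} $i$ it swaps $U_\bullet\leftrightarrow D_\bullet$, which is exactly what you use to deduce $D_l,D_r$ from $U_l,U_r$. It does \emph{not} convert the odd-$i$ picture into the even-$i$ picture. The even-$i$ case genuinely requires rerunning the same argument with the roles of $u$ and $d$ interchanged in the definitions of $p_j,q_j$ --- which is what the paper means by ``the case of even $i$ is similar''.
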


\begin{proof}
We only prove $U_l$ spans a complete subgraph, and $U_r$ spans a complete subgraph. The cases for $D_l$ and $D_r$ are similar. First we prove $u_i$ is connected to other vertices in $U_l$. Note that $p_ju_j=u_i$. However, $u_j\sim c_{j-1}$ and $u_j\sim c_j$. By applying the action of $p_j$ and using the invariance in Definition~\ref{def:construct X}, we have $u_i=p_ju_j\sim p_j c_{j-1}$ and $u_i=p_ju_j\sim p_j c_j$. Similarly, by using $p_jd_{j+1}=d_{i+1}$ and applying the action of $p_j$ to $d_{j+1}\sim c_j$ and $d_{j+1}\sim c_{j+1}$,  we know that $d_{i+1}$ is connected to other vertices in $U_r$.

By Lemma \ref{lem:technical lemma} (1), $\ui{j}c_{j}\sim \ui{j'}c_{j'}$ for $1\leqslant j,j'\leqslant n-2$. By applying the action of $u_i$, each of $\{p_jc_j\}_{j=1}^{i-1}$ and $\{p_jc_j\}_{j=i+1}^{n-2}$ spans a complete subgraph. Moreover, we deduce from Lemma \ref{lem:technical lemma} (1) together with the zigzag pattern (cf. Definition~\ref{def:construct X}) between $\ui{j}\Pi$ and $\ui{j'}\Pi$ (see Figure~\ref{fig:3} and Figure~\ref{fig:4}) that $\ui{j}c_{j-1}\sim \ui{j'}c_{j'-1}$ for $2\leqslant j,j'\leqslant n-2$ and $\ui{j}c_{j+1}\sim \ui{j'}c_{j'+1}$ for $1\leqslant j,j'\leqslant n-3$. By Lemma \ref{lem:technical lemma} (2), we know actually $\ui{j}c_{j-1}\sim \ui{j'}c_{j'-1}$ for $2\leqslant j,j'\leqslant n-1$. By Definition~\ref{def:construct X} (see Figure \ref{fig:2}), $c_1\sim \ui{j'}c_{j'+1}$ for $1\leqslant j'\leqslant n-3$. Applying the action of $u_i$, each of $\{p_jc_{j-1}\}_{j=i+1}^{n-1}$ and $\{p_jc_{j+1}\}_{j=0}^{i-1}$ spans a complete subgraph.

By Lemma \ref{lem:technical lemma} (1) and (2), $\ui{j}c_{j}\sim \ui{j'}c_{j'-1}$ for $1\leqslant j<j'\leqslant n-1$ and $\ui{j}c_{j+1}\sim \ui{j'}c_{j'}$ for $1\leqslant j<j'\leqslant n-2$. Moreover, $c_1\sim \ui{j'}c_{j'}$ for $1\leqslant j'\leqslant n-2$ (see Figure \ref{fig:2}). By applying the action of $u_i$, we know that a vertex from $\{p_jc_j\}_{j=1}^{i-1}$ (resp.\ $\{p_jc_{j+1}\}_{j=0}^{i-1}$) and a vertex from $\{p_jc_{j-1}\}_{j=i+1}^{n-1}$ (resp.\ $\{p_jc_{j}\}_{j=i+1}^{n-2}$) are adjacent. Now it follows that each of $U_l$ and $U_r$ spans a complete subgraph.
\end{proof}

Now we show $U_l$ and $U_r$ span a prism. This relies on the following four lemmas.
\begin{lemma}
	\label{lem:prism1}
Suppose $i<j\leqslant n-2$. Then a vertex $v\in U_l$ is adjacent to $p_jc_j$ if and only if $v\in \{u_i\}\cup \{p_kc_k\}_{k=1}^{i-1}\cup \{p_kc_{k-1}\}_{k=j}^{n-1}$.
\end{lemma}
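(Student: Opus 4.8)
The plan is to transport the whole picture back to the base cell $\Pi$ by acting with $\ui{i}$ and then to read off adjacencies from Lemma~\ref{lem:technical lemma}. Every vertex occurring here lies in a cell of the form $u_i\ui{k}\Pi=p_k\Pi$, and since $p_k=u_i\ui{k}$, acting by $\ui{i}$ carries $p_kc_k$ to $\ui{k}c_k\in\ui{k}\Pi$, carries $p_kc_{k-1}$ to $\ui{k}c_{k-1}$, carries $u_i$ to $l$, and carries $p_jc_j$ to $\ui{j}c_j$. Because the action is simplicial, it therefore suffices to determine, for $i<j\leqslant n-2$, which of the vertices $l$, $\ui{k}c_k$ (for $1\leqslant k\leqslant i-1$), $\ui{k}c_{k-1}$ (for $i+1\leqslant k\leqslant n-1$) are adjacent to $\ui{j}c_j$, to check that the answer is $\{l\}\cup\{\ui{k}c_k\}_{k=1}^{i-1}\cup\{\ui{k}c_{k-1}\}_{k=j}^{n-1}$, and finally to apply $u_i$ to translate this list back.

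I would then carry out the following case analysis. For $l$: since $u_j\sim c_j$ in $\Pi$ (see Figure~\ref{f:cell}) and $l=\ui{j}u_j$, one gets $l\sim\ui{j}c_j$; equivalently $u_i=p_ju_j\sim p_jc_j$, so $u_i$ is always adjacent and always lies in the claimed list. For $\ui{k}c_k$ with $1\leqslant k\leqslant i-1$: here $1\leqslant k<j\leqslant n-2$, so Lemma~\ref{lem:technical lemma}(1) applied to the pair $(k,j)$ gives $\ui{k}c_k\sim\ui{j}c_j$, and again all these vertices are in the claimed list. For $\ui{k}c_{k-1}$ with $i+1\leqslant k\leqslant n-1$ I would split into four subcases: if $k=j$, then $\ui{j}c_{j-1}\sim\ui{j}c_j$ because $c_{j-1}$ and $c_j$ lie consecutively on the central segment of $\Pi$; if $j<k\leqslant n-2$, then Lemma~\ref{lem:technical lemma}(1) applied to $(j,k)$ identifies $\ui{k}c_k$ and $\ui{k}c_{k-1}$ as the only interior vertices of $\ui{k}\Pi$ adjacent to $\ui{j}c_j$, so $\ui{k}c_{k-1}\sim\ui{j}c_j$; if $k=n-1$, then Lemma~\ref{lem:technical lemma}(2) applied to $(j,n-1)$ makes $\ui{n-1}c_{n-2}=\ui{n-1}c_{k-1}$ the unique interior vertex of $\ui{n-1}\Pi$ adjacent to $\ui{j}c_j$, so again $\ui{k}c_{k-1}\sim\ui{j}c_j$; and if $i+1\leqslant k\leqslant j-1$, then Lemma~\ref{lem:technical lemma}(1) applied to $(k,j)$ shows the only interior vertices of $\ui{k}\Pi$ adjacent to $\ui{j}c_j$ are $\ui{k}c_k$ and $\ui{k}c_{k+1}$, so $\ui{k}c_{k-1}\nsim\ui{j}c_j$. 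Since the claimed list contains $\ui{k}c_{k-1}$ precisely for $k\geqslant j$, these computations match up, and translating back by $u_i$ gives the lemma.

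I expect the only delicate point to be the last subcase ($i+1\leqslant k\leqslant j-1$, which is nonempty only when $j\geqslant i+2$), where non-adjacency is asserted: there one must use the ``exactly two interior vertices'' clause of Lemma~\ref{lem:technical lemma}(1), not merely its existence clause, and one must take care to route the extreme values $k=n-1$ and $j=n-2$ into the correct part, (1) or (2), of Lemma~\ref{lem:technical lemma}. The rest is routine index bookkeeping with the zigzag patterns of Figures~\ref{fig:2} and~\ref{fig:0}.
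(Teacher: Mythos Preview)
Your proposal is correct and follows essentially the same approach as the paper: both transport the question to the base cell via $\ui{i}$ (equivalently, use $p_k=u_i\ui{k}$) and then read off each adjacency or non-adjacency from the appropriate part of Lemma~\ref{lem:technical lemma}. Your case split is slightly more explicit (separating $k=j$, $j<k\leqslant n-2$, and $k=n-1$) where the paper handles $j<k\leqslant n-1$ in one breath by citing parts (1) and (2) together, but the content is the same.
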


\begin{proof}
Note that $p_ju_j=u_i$. Since $c_j\sim u_j$, $u_i=p_ju_j\sim p_jc_j$. Suppose $1\leqslant k\leqslant i-1$. Thus $1\leqslant k<j\leqslant n-2$. By Lemma \ref{lem:technical lemma} (1), $\ui{k}c_k\sim\ui{j}c_j$, thus $p_kc_k\sim p_jc_j$. Suppose $i+1\leqslant k<j$. Then $\ui{k}c_{k-1}\nsim\ui{j}c_j$ by Lemma \ref{lem:technical lemma} (1), thus $p_{k}c_{k-1}\nsim p_{j}c_j$. It is clear that $p_jc_{j-1}\sim p_jc_j$. Suppose $j<k\leqslant n-1$. Then $\ui{j}c_j\sim \ui{k}c_{k-1}$ by Lemma \ref{lem:technical lemma} (1) and (2), hence $p_{j}c_{j}\sim p_{k}c_{k-1}$.
\end{proof}

\begin{lemma}
	\label{lem:prism2}
A vertex $v\in U_l$ is adjacent to $d_{i+1}\in U_r$ if and only if $v\in \{u_i\}\cup \{p_kc_k\}_{k=1}^{i-1}\cup \{p_{n-1}c_{n-2}\}$.
\end{lemma}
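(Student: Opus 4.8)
The plan is to treat Lemma~\ref{lem:prism2} as the $j=n-1$ (``boundary'') case of Lemma~\ref{lem:prism1}, since $d_{i+1}\in U_r$ plays the role that $p_jc_j$ plays there when $j$ is pushed all the way to $n-1$; indeed, by Lemma~\ref{l:adj real}-type identifications, $d_{i+1}$ is the interior-adjacent real vertex that ``faces'' the far end of the zigzag, and the unique interior vertex of $p_{n-1}\Pi$ meeting it is $p_{n-1}c_{n-2}$ (cf.\ Lemma~\ref{lem:vertices adjacent1}(4) and Lemma~\ref{l:prism4}). So first I would record the relevant identifications: $p_ku_k = u_i$ and $p_kd_{k+1}=d_{i+1}$ for the appropriate range of $k$, and $p_{n-1}$ sends the right tip $r$ of $\Pi$ (or the relevant $u$-vertex) to $d_{i+1}$ — these are exactly the translations used in the proof of Lemma~\ref{lem:prism1}, read off from Figures~\ref{fig:2} and \ref{f:cell}.

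Next I would verify the ``if'' direction vertex by vertex. For $u_i$: since $d_{i+1}\sim u_i$ already inside $\Pi$ (they are two of the six class-A vertices around $c_i$), this is immediate. For $p_kc_k$ with $1\leqslant k\leqslant i-1$: translating by $p_k^{-1}=u_k u_i^{-1}$ reduces the claim to $\ui{k}c_k \sim \ui{?}(\text{the vertex hitting } d_{i+1})$; here one uses that under $p_k$ the vertex $d_{i+1}$ corresponds to $d_{k+1}\in \ui{k}\Pi$, and $c_k\sim d_{k+1}$ holds in $\ui{k}\Pi$ by the cell structure — so $p_kc_k\sim d_{i+1}$. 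For $p_{n-1}c_{n-2}$: this is Lemma~\ref{lem:vertices adjacent1}(4) combined with the fact that $d_{i+1}$ lies on $\partial(p_{n-1}\Pi)$ facing $p_{n-1}c_{n-2}$, which one gets from Lemma~\ref{lem:technical lemma0}(2) after identifying $d_{i+1}$ as a boundary vertex of $p_{n-1}\Pi$.

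For the ``only if'' direction I would go through the remaining candidate vertices of $U_l$ and rule each out. The vertices $p_kc_k$ with $k\geqslant i+1$ (if any such lie in $U_l$ — actually in $U_l$ these appear as $p_kc_{k-1}$), and the vertices $p_kc_{k-1}$ for $i+1\leqslant k\leqslant n-2$, must be shown non-adjacent to $d_{i+1}$; the key point is that $d_{i+1}$ is a \emph{real} vertex, so any interior vertex adjacent to it must lie in a cell whose boundary contains $d_{i+1}$, and by Corollary~\ref{cor:connected intersection}(2)–(3) the cells $p_k\Pi$ for these $k$ meet $\Pi$ in the upper half in a way that (after translating) places $d_{i+1}$ at the wrong tip — so it is not on $\partial(p_k\Pi)$, or is an isolated intersection point, hence not adjacent to the relevant interior vertices. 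One also checks $d_{i+1}\nsim p_0c_1$ directly from Figure~\ref{fig:2} ($c_1$ is adjacent to $u_1$, not to $d_{i+1}$, inside $\Pi$, so after applying $u_i$ the image $p_0c_1$ is not adjacent to $d_{i+1}$).

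The main obstacle I expect is bookkeeping: correctly tracking which translate of which tip of $\ui{k}\Pi$ the real vertex $d_{i+1}$ corresponds to under $p_k=u_i\ui{k}$, especially near the boundary index $k=n-1$ where the cell $p_{n-1}\Pi$ degenerates to sharing essentially a single edge's worth of interior adjacency — this is exactly the kind of off-by-one that Lemma~\ref{lem:technical lemma}(2) and Lemma~\ref{lem:vertices adjacent1}(4) were isolated to handle, so I would lean on them rather than re-deriving the zigzag pattern. Once the identifications are pinned down, each individual adjacency or non-adjacency is a one-line consequence of the already-established cell-intersection lemmas (Corollary~\ref{cor:connected intersection}, Corollary~\ref{cor:disjoint}, Lemma~\ref{lem:technical lemma}).
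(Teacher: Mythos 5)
Your ``if'' direction is essentially the paper's argument: identify $d_{i+1}$ with $p_kd_{k+1}$ (and, for $k=n-1$, with $p_{n-1}r$), read off the adjacencies $c_k\sim d_{k+1}$ and $c_{n-2}\sim r$ inside the standard cell, and translate by $p_k$. (Minor slip: these adjacencies live in the base cell $\Pi$, not in $\ui{k}\Pi$.)

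The ``only if'' direction, however, rests on a false mechanism. You claim that for $i+1\leqslant k\leqslant n-2$ the real vertex $d_{i+1}$ ``is not on $\partial(p_k\Pi)$, or is an isolated intersection point''. But by the very definition of the cells $p_k\Pi$ --- they are precisely the cells $w\Pi$ with $\overline{u_id_{i+1}}\subset w\Pi\cap\Pi$ --- the vertex $d_{i+1}$ lies on $\partial(p_k\Pi)$ for \emph{every} $k$, and $p_k\Pi\cap\Pi$ contains an edge, so neither alternative holds and Corollary~\ref{cor:connected intersection} cannot rule these vertices out. The correct reason, and the one the paper uses, is internal to the cell: $d_{i+1}=p_kd_{k+1}$, and in the standard cell the interior vertex $c_{k-1}$ is adjacent only to the boundary vertices $u_{k-1},u_k,d_{k-1},d_k$, so $d_{k+1}\nsim c_{k-1}$ and hence $d_{i+1}\nsim p_kc_{k-1}$ after translating. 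A second error: you propose to verify $d_{i+1}\nsim p_0c_1$. This is both irrelevant ($p_0c_1$ belongs to $U_r$, not $U_l$, so it is not a candidate $v$ in this lemma) and false ($U_r$ spans a complete subgraph, so in fact $d_{i+1}\sim p_0c_1$).
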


\begin{proof}
It is clear that $u_i\sim d_{i+1}$. Since $u_i=p_ku_k$, $d_{i+1}=p_kd_{k+1}$. Suppose $1\leqslant k\leqslant n-2$. Since $d_{k+1}\sim c_k$ and $d_{k+1}\nsim c_{k-1}$, $d_{i+1}=p_kd_{k+1}\sim p_kc_k$ and $d_{i+1}\nsim p_kc_{k-1}$. Suppose $k=n-1$. Then $d_{i+1}=p_{n-1}r$. Since $r\sim c_{n-2}$, $p_{n-1}c_{n-2}\sim p_{n-1}r=d_{i+1}$.
\end{proof}

\begin{lemma}
	\label{lem:prism3}
	A vertex $v\in U_l$ is adjacent to $p_0c_1\in U_r$ if and only if $v\in \{u_i\}\cup \{p_kc_k\}_{k=1}^{i-1}$.
\end{lemma}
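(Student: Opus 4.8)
Lemma \ref{lem:prism3} (proof proposal).

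The plan is to follow exactly the same template as in Lemmas \ref{lem:prism1} and \ref{lem:prism2}: move everything into the cell $\ui{i}\Pi$ by applying $u_i^{-1}$, use the explicit zigzag pattern of Figure \ref{fig:2} together with Lemma \ref{lem:technical lemma}, and then push the conclusion back by $u_i$. Concretely, since $p_0=u_i$ and $p_k=u_i\ui{k}$, the vertex $p_0c_1$ is just $u_i\cdot c_1$, i.e.\ $c_1$ viewed in the base cell $\Pi$, while $U_l$ consists (after applying $u_i^{-1}$) of $u_i^{-1}u_i=$ the identity-type real vertex $u_i$ together with the interior vertices $\{\ui{k}c_k\}_{k=1}^{i-1}$ and $\{\ui{k}c_{k-1}\}_{k=i+1}^{n-1}$ of the cells $\ui{k}\Pi$. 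So the statement to be proved, after translating by $u_i^{-1}$, is: the vertices of (the $u_i^{-1}$-image of) $U_l$ adjacent to $c_1\in\Pi$ are precisely $u_i$ and $\{\ui{k}c_k\}_{k=1}^{i-1}$.

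The key steps, in order. First, $u_i\sim p_0c_1$: this is immediate since $u_i=p_0 u_0^{-1}\cdot$(something) — more directly, $c_1$ and $u_1$ both lie in $\Pi$ and are adjacent there, and applying $u_i$ after noting $p_iu_j$-type identities, or simply reading it off Figure \ref{fig:2}, gives $u_i\sim p_0 c_1$. Second, for $1\leqslant k\leqslant i-1$: by Lemma \ref{lem:technical lemma}(1) (with the roles of the two cells as in Figure \ref{fig:2}), $c_1$ in $\Pi$ is adjacent to exactly $\ui{k}c_k$ and $\ui{k}c_{k+1}$ in $\ui{k}\Pi$; since $k\leqslant i-1$ the vertex of $U_l$ sitting in $\ui{k}\Pi$ is $\ui{k}c_k$, which is adjacent, while $\ui{k}c_{k+1}$ is \emph{not} the $U_l$-vertex (it belongs to $U_r$). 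Applying $u_i$ gives $p_kc_k\sim p_0c_1$. Third, for $i+1\leqslant k\leqslant n-1$: the $U_l$-vertex in $\ui{k}\Pi$ is $\ui{k}c_{k-1}$; again by Lemma \ref{lem:technical lemma}(1) (and (2) when $k=n-1$) the only interior vertices of $\ui{k}\Pi$ adjacent to $c_1$ are $\ui{k}c_k$ and $\ui{k}c_{k+1}$, neither of which is $\ui{k}c_{k-1}$, so $\ui{k}c_{k-1}\nsim c_1$ and hence $p_kc_{k-1}\nsim p_0c_1$. Finally, $p_0c_1\nsim p_{n-1}c_{n-2}$ should be recorded either here or it follows from the $k=n-1$ case of the previous step; and $p_0c_1$ being an element of $U_r$ is not adjacent to itself, so it is not in $U_l$ — this disposes of the remaining vertex and completes the ``if and only if''.

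I do not expect any real obstacle: this is a routine verification entirely parallel to the two preceding lemmas, the only care needed being the bookkeeping of which interior vertex of each $\ui{k}\Pi$ is the designated $U_l$-vertex (it switches from $c_k$ to $c_{k-1}$ as $k$ crosses $i$) and correctly invoking Lemma \ref{lem:technical lemma}(1) versus (2) at the extreme index $k=n-1$. The mildest subtlety is simply making sure, when $i=1$, that $\{p_kc_k\}_{k=1}^{i-1}$ is empty so that the answer reduces to $\{u_i\}$, which is consistent with the picture.
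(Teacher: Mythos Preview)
Your approach is essentially the same as the paper's: translate by $u_i^{-1}$ so that $p_0c_1$ becomes $c_1\in\Pi$ and the $U_l$--vertices become $l$, $\{\ui{k}c_k\}_{k=1}^{i-1}$, $\{\ui{k}c_{k-1}\}_{k=i+1}^{n-1}$, then read off adjacencies to $c_1$ from the zigzag pattern.

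Two small corrections. First, your citations to Lemma~\ref{lem:technical lemma}(1)(2) are misplaced: that lemma computes adjacencies between $\ui{i}c_i$ and interior vertices of $\ui{j}\Pi$, not between $c_1\in\Pi$ and $\ui{k}\Pi$. What you actually want is the explicit edge description in Section~\ref{subsec:subividing and adding new edges} (Figure~\ref{fig:2}), which gives $c_1\sim\ui{k}c_k$ and $c_1\nsim\ui{k}c_{k-1}$ for $1\leqslant k\leqslant n-2$; this is exactly what the paper invokes. Second, your sentence ``the only interior vertices of $\ui{k}\Pi$ adjacent to $c_1$ are $\ui{k}c_k$ and $\ui{k}c_{k+1}$'' is false at $k=n-1$ (and already degenerate at $k=n-2$): for $k=n-1$ the cells $\Pi$ and $\ui{n-1}\Pi$ meet in a single edge, so \emph{no} interior vertices are joined. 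The paper handles this case separately with precisely that observation, concluding $p_0c_1\nsim p_{n-1}c_{n-2}$ directly. Your conclusion is still correct, but the justification you wrote for $k=n-1$ does not hold as stated.
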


\begin{proof}
Recall that $p_0=u_i$. Since $c_1\sim l$, $p_0c_1\sim u_il=u_i$. Suppose $1\leqslant k\leqslant n-2$. Then $c_1\sim \ui{k}c_k$, and $c_1\nsim \ui{k}c_{k-1}$ when $k>1$. By applying the action of $p_0=u_i$, $p_0c_1$ is adjacent to each vertex in $\{p_kc_k\}_{k=1}^{i-1}$ when $\{p_kc_k\}_{k=1}^{i-1}$ is nonempty, and $p_0c_1$ is adjacent to none of $\{p_kc_{k-1}\}_{k=i+1}^{n-2}$. When $k=n-1$, $\Pi$ and $\ui{n-1}\Pi$ only intersect along one edge, then $p_0\Pi$ and $p_{n-1}\Pi$ do as well. Thus interior vertices of $p_0\Pi$ and interior vertices of $p_{n-1}\Pi$ are not adjacent, in particular $p_0c_1\nsim p_{n-1}c_{n-2}$.
\end{proof}

\begin{lemma}
	\label{lem:prism4}
Suppose $1\leqslant j\leqslant i-1$. A vertex $v\in U_l$ is adjacent to $p_jc_{j+1}\in U_r$ if and only if $v\in \{p_kc_k\}_{k=j}^{i-1}$.
\end{lemma}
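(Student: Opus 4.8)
The plan is to argue as in the proofs of Lemmas~\ref{lem:prism1}--\ref{lem:prism3}: translate the adjacency question by the group element $u_i^{-1}$, so that it becomes a question about adjacencies among the standard cells $\Pi$ and $\ui{m}\Pi$, and then read the answer off Lemma~\ref{lem:technical lemma} together with the zigzag patterns of Figures~\ref{fig:2}, \ref{fig:3} and \ref{fig:4}. Since $p_j=u_iu_j^{-1}$ and $p_k=u_iu_k^{-1}$, applying $u_i^{-1}$ sends $p_jc_{j+1}$ to $\ui{j}c_{j+1}$, the real vertex $u_i\in U_l$ to $l\in\Pi$, each $p_kc_k$ (for $1\le k\le i-1$) to $\ui{k}c_k$, and each $p_kc_{k-1}$ (for $i+1\le k\le n-1$) to $\ui{k}c_{k-1}$. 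So it is enough to show that, among the vertices $l$, $\{\ui{k}c_k\}_{k=1}^{i-1}$ and $\{\ui{k}c_{k-1}\}_{k=i+1}^{n-1}$, those adjacent to $\ui{j}c_{j+1}$ are precisely $\{\ui{k}c_k\}_{k=j}^{i-1}$; translating back by $u_i$ then gives the claimed description of the neighbours of $p_jc_{j+1}$ in $U_l$.

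First I would rule out $l$: the interior vertex $\ui{j}c_{j+1}$ lies in no cell other than $\ui{j}\Pi$, so an edge from $l$ to it would lie in $\ui{j}\Pi$, and since $l=\ui{j}u_j$ inside that cell such an edge would correspond to an edge of $\Pi$ joining $u_j$ to $c_{j+1}$; but $u_j$ is not one of the six vertices of $\Pi$ adjacent to $c_{j+1}$ (see Figure~\ref{f:cell}), so $l\nsim\ui{j}c_{j+1}$. For the remaining vertices, which lie in cells $\ui{k}\Pi$ with $k\neq j$, the input is the cross-cell adjacency pattern between $\ui{j}\Pi$ and $\ui{k}\Pi$, obtained from Lemma~\ref{lem:technical lemma} together with the fact that the new edges between two cells form a zigzag, so that one ``rung'' determines the rest. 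One then reads off three cases. For $j\le k\le i-1$ we get $\ui{j}c_{j+1}\sim\ui{k}c_k$: indeed when $j<k$ Lemma~\ref{lem:technical lemma}(1) says the interior vertices of $\ui{j}\Pi$ adjacent to $\ui{k}c_k$ are $\ui{j}c_j$ and $\ui{j}c_{j+1}$, while for $k=j$ this is just $c_j\sim c_{j+1}$ in $\Pi$. For $1\le k<j$ the interior vertices of $\ui{j}\Pi$ adjacent to $\ui{k}c_k$ are $\ui{j}c_j$ and $\ui{j}c_{j-1}$ by Lemma~\ref{lem:technical lemma}(1), neither being $\ui{j}c_{j+1}$, so $\ui{k}c_k\nsim\ui{j}c_{j+1}$. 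For $i+1\le k\le n-1$ (so $j<k$) the interior vertices of $\ui{k}\Pi$ adjacent to $\ui{j}c_{j+1}$ are exactly $\ui{k}c_k$ and $\ui{k}c_{k+1}$, so $\ui{k}c_{k-1}\nsim\ui{j}c_{j+1}$.

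The step I expect to be the main obstacle is this last one, namely showing that for $j<k$ the interior vertices of $\ui{k}\Pi$ adjacent to $\ui{j}c_{j+1}$ are \emph{exactly} $\ui{k}c_k,\ui{k}c_{k+1}$. Lemma~\ref{lem:technical lemma} only pins down the neighbours of the extreme interior vertices $\ui{m}c_m$, so one must propagate this along the zigzag (as in Figures~\ref{fig:3}, \ref{fig:4}) and handle the two ends with care: the case $k=n-1$, where $\ui{k}\Pi$ meets $\Pi$ in a single edge and Lemma~\ref{lem:technical lemma}(2) takes the place of (1), and the left end of the zigzag, where the role of an interior vertex is played by $l$. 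Once this bookkeeping is done, the three cases above combine to give exactly $\{\ui{k}c_k\}_{k=j}^{i-1}$, just as in Lemmas~\ref{lem:prism1}--\ref{lem:prism3}.
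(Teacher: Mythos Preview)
Your proposal is correct and follows essentially the same route as the paper: translate by $u_i^{-1}$ so that $p_jc_{j+1}$ becomes $\ui{j}c_{j+1}$ and the elements of $U_l$ become $l$, $\{\ui{k}c_k\}_{k=1}^{i-1}$, $\{\ui{k}c_{k-1}\}_{k=i+1}^{n-1}$; then rule out $l$ via $u_j\nsim c_{j+1}$ in $\Pi$, and treat the remaining cases using Lemma~\ref{lem:technical lemma} and the zigzag pattern, with $k=n-1$ handled separately via part~(2). One small comment: the step you flag as the main obstacle, namely $\ui{k}c_{k-1}\nsim\ui{j}c_{j+1}$ for $j<k\leqslant n-2$, is dispatched in the paper by a bare citation of Lemma~\ref{lem:technical lemma}(1), so you are reading that lemma a bit too narrowly---its proof pins down the entire zigzag between $\ui{j}\Pi$ and $\ui{k}\Pi$, from which the non-adjacency is immediate (note also that in your range $k\geqslant i+1\geqslant j+2$, so the awkward case $k=j+1$ never arises).
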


\begin{proof}
Since $c_{j+1}\nsim u_j$, we have $\ui{j}c_{j+1}\nsim \ui{j}u_j=l$. Hence $p_{j}c_{j+1}\nsim u_il=u_i$. Suppose $1\leqslant k<j$. Then $\ui{k}c_k\nsim\ui{j}c_{j+1}$ by Lemma \ref{lem:technical lemma} (1). Thus $p_kc_k\nsim p_jc_{j+1}$. Suppose $k=j$. Then it is clear that $p_jc_j\sim p_jc_{j+1}$. Suppose $j<k\leqslant n-2$. Then $\ui{k}c_k\sim\ui{j}c_{j+1}$ and $\ui{k}c_{k-1}\nsim\ui{j}c_{j+1}$ by Lemma \ref{lem:technical lemma} (1). Thus $p_kc_k\sim p_jc_{j+1}$ and $p_kc_{k-1}\nsim p_jc_{j+1}$. It follows that $p_jc_{j+1}$ is adjacent to each of $\{p_kc_k\}_{k=j+1}^{i-1}$ and is adjacent to none of $\{p_kc_{k-1}\}_{k=i+1}^{n-2}$. Suppose $k=n-1$. By Lemma \ref{lem:technical lemma} (2), $u^{-1}_{n-1}c_{n-2}\sim \ui{j}c_j$ and $u^{-1}_{n-1}c_{n-2}\sim \ui{j}c_{j-1}$. Thus $u^{-1}_{n-1}c_{n-2}\nsim \ui{j}c_{j+1}$ and $p_{n-1}c_{n-2}\nsim p_jc_{j+1}$.
\end{proof}

It follows from Lemma \ref{lem:prism1}, Lemma \ref{lem:prism2}, Lemma \ref{lem:prism3} and Lemma \ref{lem:prism4} that $U_l$ and $U_r$ span a prism with the linear order on $U_r$ given by $p_{i+1}c_{i+1}>\ldots>p_{n-2}c_{n-2}>d_{i+1}>p_0c_1>\ldots>p_{i-1}c_i$. Similarly, we can prove $D_l$ and $D_r$ span a prism. Thus all the conditions in Definition \ref{def:thick hexagon} are satisfied and we have finished the proof of Proposition \ref{prop:interior link is a thick hexagon}.

\section{The complexes for Artin groups of almost large type}
\label{s:general}
Let $A_\Gamma$ be an Artin group with defining graph $\Gamma$. Let $\Gamma'\subset\Gamma$ be a full subgraph with induced edge labeling and let $A_{\Gamma'}$ be the Artin group with defining graph $\Gamma'$. The following is proved in \cite{Van1983homotopy}.
\begin{theorem}
	\label{thm:lek}
	Let $\Gamma_1$ and $\Gamma_2$ be full subgraphs of $\Gamma$ with the induced edge labelings. Then
	\begin{enumerate}
		\item the natural homomorphism $A_{\Gamma_1}\to A_\Gamma$ is injective;
		\item $A_{\Gamma_1}\cap A_{\Gamma_2}=A_{\Gamma_1\cap\Gamma_2}$.
	\end{enumerate} 
\end{theorem}

Subgroups of $A_{\Gamma}$ of the form $A_{\Gamma'}$ are called \emph{standard subgroups}.

Let $P_{\Gamma}$ be the standard presentation complex of $A_{\Gamma}$, and let $X^{\ast}_{\Gamma}$ be the universal cover of $P_{\Gamma}$. We orient each edge in $P_{\Gamma}$ and label each edge in $P_{\Gamma}$ by a generator of $A_\Gamma$. Thus edges of $X^{\ast}_{\Gamma}$ have induced orientation and labeling. There is a natural embedding $P_{\Gamma'}\hookrightarrow P_\Gamma$. Since $A_{\Gamma'}\to A_{\Gamma}$ is injective, $P_{\Gamma'}\hookrightarrow P_\Gamma$ lifts to various embeddings $X^{\ast}_{\Gamma'}\to X^{\ast}_{\Gamma}$. Subcomplexes of $X^{\ast}_{\Gamma}$ arising in such way are called \emph{standard subcomplexes}. The subgraph $\Gamma'$ is the \emph{defining graph} of this standard subcomplex.

Now we assume $A_{\Gamma}$ is of almost large type.
Recall that it means that in the defining graph $\Gamma$ there is no triangle with an edge labeled by two and no square with three edges labeled by two.
A \emph{block} of $X^{\ast}_{\Gamma}$ is a standard subcomplex which comes from an edge in $\Gamma$. This edge is called the \emph{defining edge} of the block. The block is \emph{large} (resp.\ \emph{small}) if its defining edge is labeled by an integer $\geqslant 3$ (resp.\ $=2$). 

The following is a direct consequence of Theorem~\ref{thm:lek}.
\begin{corollary}
	\label{cor:intersection of blocks}
Let $B_1$ and $B_2$ be blocks of $X^\ast_\Gamma$ with defining edges $e_1$ and $e_2$, respectively. Suppose $x$ is a vertex in $B_1\cap B_2$. Let $E$ be the standard subcomplex of $X^\ast_\Gamma$ containing $x$ with defining graph $e_1\cap e_2$ (note that $E=\{x\}$ when $e_1\cap e_2=\emptyset$). Then $B^{(0)}_1\cap B^{(0)}_2=E^{(0)}$.
\end{corollary}

We define precells of $X^{\ast}_{\Gamma}$ as in Section \ref{subsec:precells}, and subdivide each precell as in Figure \ref{f:cell} to obtain a simplicial complex $\Xb_\Gamma$. Interior vertices and real vertices of $\Xb_{\Gamma}$ are defined in a similar way. 

\begin{definition}[Constructing $X_\Gamma$]
	\label{def:construct XGa}
Within each block of $\Xb_\Gamma$, we add edges between interior vertices as in Definition~\ref{def:construct X}. Since each element of $A_\Gamma$ maps one block to another block with the same defining edge, and the stabilizer of each block is a conjugate of a standard subgroup of $A_{\Gamma}$, one readily verifies that the newly added edges are compatible with the action of deck transformations $A_{\Gamma}\curvearrowright\Xb_\Gamma$. Let $X'_\Gamma$ be the complex obtained by adding all the new edges, and let $X_{\Gamma}$ be the flag completion of $X'_\Gamma$. The action $A_{\Gamma}\curvearrowright\Xb_\Gamma$ extends to a simplicial action $A_{\Gamma}\curvearrowright X_{\Gamma}$, which is proper and cocompact. A \emph{block} in $X_{\Gamma}$ is defined to be the full subcomplex spanned by vertices in a block of $\Xb_\Gamma$.
\end{definition}

\begin{lemma}
	\label{lem:at most one edge}
If two cells are in different blocks of $\Xb_\Gamma$, then their intersection is at most one edge.
\end{lemma}

\begin{proof}
Let $B_1$ and $B_2$ be two different blocks in $\Xa_\Gamma$ and let $C_i\subset B_i$ be precells for $i=1,2$. It suffices to show $C_1\cap C_2$ is connected. By considering the quotient homomorphism from $A_\Gamma$ to its associated Coxeter group, we know that the inclusion of $1$-skeleta $C^{(1)}_i\hookrightarrow (\Xa_\Gamma)^{(1)}$ is isometric with respect to the path metric. As $B^{(1)}$ is convex with respect to the path metric on $(\Xa_\Gamma)^{(1)}$ (\cite{MR3291260}), $C_1\cap C_2$ is connected.
\end{proof}

Lemma~\ref{lem:at most one edge} is the reason why we did not add edges between interior vertices from different blocks in Definition~\ref{def:construct XGa}.

\begin{lemma}
	\label{lem:iso}
	The isomorphism between a block in $X^{\ast}_{\Gamma}$ and the space $\Xa$ in Section \ref{subsec:precells} naturally extends to an isomorphism between a block in $X_{\Gamma}$ and the space $X$ in Section \ref{subsec:subividing and adding new edges}.
\end{lemma}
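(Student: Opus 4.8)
The plan is to verify that the two constructions producing $X$ from $\Xa$ and a block in $X_\Gamma$ from a block in $X^{\ast}_\Gamma$ agree step by step, and then appeal to the fact that both complexes are obtained as flag completions of $1$--skeleta built by the same recipe. First I would start with the given isomorphism $\varphi\colon B^{\ast}\to \Xa$, where $B^{\ast}$ is a fixed block of $X^{\ast}_\Gamma$ with defining edge $e$ labeled by some integer $n\geqslant 2$; here $B^{\ast}$ is by definition the image of an embedding $X^{\ast}_{\Gamma'}\to X^{\ast}_\Gamma$ for $\Gamma'=e$, and $A_{\Gamma'}\cong DA_n$, so $B^{\ast}$ is $A_{\Gamma'}$--equivariantly isomorphic to $\Xa$ for $DA_n$. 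This isomorphism respects orientations and labels of edges, since both complexes carry edge data induced from their respective presentation complexes and $\varphi$ is the lift of the inclusion $P_{\Gamma'}\hookrightarrow P_\Gamma$.

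Next I would check that $\varphi$ carries precells to precells and hence induces an isomorphism of the subdivided complexes $B^{\scalebox{0.53}{$\triangle$}}\to \Xb$. A precell of $X^{\ast}_\Gamma$ contained in $B^{\ast}$ is, by definition (Section~\ref{s:general} referencing Section~\ref{subsec:precells}), an embedded lift of the attaching map of the $2$--cell corresponding to the relator of $e$; under $\varphi$ this is exactly a precell of $\Xa$ in the sense of Corollary~\ref{cor:cell embeded}. Conversely every precell of $\Xa$ is hit, since $\varphi$ is surjective and every precell of $\Xa$ lifts the single relator of $DA_n$. The subdivision of Figure~\ref{f:cell} is determined entirely by the combinatorics of a single cell (the number of interior vertices is $n-2$, placed in the fixed zigzag pattern), so $\varphi$ extends to the subdivided complexes, matching real vertices with real vertices and interior vertices with interior vertices.

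Then I would compare the added edges. In Section~\ref{subsec:subividing and adding new edges}, the new edges of $X$ are specified by: (i) the $DA_n$--set $\Lambda$ of unordered pairs of cells meeting in $\geqslant 2$ edges, (ii) the orbit representatives $\Lambda_0$ consisting of pairs $(\Pi,\ui{i}\Pi)$ and $(\Pi,\di{i}\Pi)$, and (iii) the explicit zigzag edges of Figures~\ref{fig:2} and~\ref{fig:1}, transported by the $DA_n$--action. In $X_\Gamma$, the edges inside the block $B^{\scalebox{0.53}{$\triangle$}}$ are added by exactly the same procedure applied inside the block, using that the stabilizer of $B^{\ast}$ is a conjugate of the standard subgroup $A_{\Gamma'}\cong DA_n$ and acts on $B^{\ast}$ as $DA_n$ acts on $\Xa$. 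Since $\varphi$ is $DA_n$--equivariant and matches cells with cells, it matches $\Lambda$ with $\Lambda$, $\Lambda_0$ with $\Lambda_0$, and the zigzag edges with the zigzag edges; here I would use the Remark that cells in different blocks meet in at most one edge, so that no pair in $\Lambda$ for $X_\Gamma$ spans two distinct blocks and the edge-adding inside $B^{\scalebox{0.53}{$\triangle$}}$ really is an intrinsic copy of the $DA_n$ construction. Hence $\varphi$ extends to an isomorphism $(B')^{(1)}\to (X')^{(1)}$ of the $1$--skeleta before flag completion, and since both $X$ and the block in $X_\Gamma$ are defined to be the flag completions of these graphs, $\varphi$ extends uniquely to the desired simplicial isomorphism.

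The only real subtlety — and the step I would treat most carefully — is (iii): making sure that the \emph{choice} involved in the zigzag pattern (which $5$--cycles get which diagonal, as in the Example following Figure~\ref{f:cell2}) is made consistently, i.e.\ that the edge set inside $B^{\scalebox{0.53}{$\triangle$}}$ really is defined by restricting the \emph{same} $A_\Gamma$--equivariant choice, not an independent one. This is where one uses that $A_\Gamma$ acts transitively on blocks with a given defining edge and that the construction in Section~\ref{s:general} adds edges block-by-block using precisely the scheme of Section~\ref{subsec:subividing and adding new edges}; once this compatibility is spelled out, the rest is a routine identification of combinatorial data under an equivariant bijection.
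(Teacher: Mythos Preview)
Your argument correctly tracks the step-by-step construction, but it has a genuine gap at the final step. You assert that ``both $X$ and the block in $X_\Gamma$ are defined to be the flag completions of these graphs,'' and conclude the isomorphism from there. But that is not how a block in $X_\Gamma$ is defined: it is the \emph{full subcomplex of $X_\Gamma$} spanned by the vertices of the corresponding block of $\Xb_\Gamma$. So its $1$--skeleton consists of \emph{all} edges of $X_\Gamma$ between those vertices, not just the edges produced by running the Section~\ref{subsec:subividing and adding new edges} construction inside the block. You therefore need to rule out extra edges coming from the ambient complex.

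The added (``zigzag'') edges cause no trouble: as you note via the Remark, cells in different blocks meet in at most one edge, so no new edges of $X'_\Gamma$ join interior vertices across blocks. The real issue is with edges already present in $X^{\ast}_\Gamma$: two real vertices $v_1,v_2$ of the block $B^{\ast}$ might a priori be joined in $X^{\ast}_\Gamma$ by an edge labeled by a generator \emph{not} in the defining edge of $B^{\ast}$. Such an edge would lie in $\Xb_\Gamma\subset X_\Gamma$ but not in $B^{\scalebox{0.53}{$\triangle$}}$, and would therefore appear in the full subcomplex, destroying the desired isomorphism with $X$. This is exactly the point the paper isolates: it reduces the lemma to showing that if two vertices of $B\subset X^{\ast}_\Gamma$ are not adjacent in $B$ then they are not adjacent in $X^{\ast}_\Gamma$, and this is supplied by the convexity of $B^{(1)}$ in the $1$--skeleton of $X^{\ast}_\Gamma$ (the Charney--Paris result cited as \cite{MR3291260}). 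Your write-up needs this ingredient; the ``compatibility of zigzag choices'' you highlight as the main subtlety is, by contrast, routine once equivariance is in place.
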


\begin{proof}
	By our construction, it suffices to show that if two vertices $v_1$ and $v_2$ in a block $B\subset X^{\ast}_\Gamma$ are not adjacent in this block, then they are not adjacent in $\Xa_\Gamma$. However, this follows from the fact that $B^{(1)}$ is convex with respect to the path metric on the $1$-skeleton of $X^{\ast}_\Gamma$ (\cite{MR3291260}).
\end{proof}

\begin{lemma}
	\label{lem:sc}
The complex $X_\Gamma$ is simply connected.
\end{lemma}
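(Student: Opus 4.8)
The goal is to show that $X_\Gamma$ is simply connected. Since $X_\Gamma$ is obtained from $\Xb_\Gamma$ by adding edges (and then filling in higher-dimensional simplices, which does not affect $\pi_1$), and since $\Xb_\Gamma$ is obtained from $X^{\ast}_\Gamma$ by subdividing cells (which also does not change the homotopy type), it suffices to recall that $X^{\ast}_\Gamma$ is simply connected: it is by definition the universal cover of the presentation complex $P_\Gamma$ of $A_\Gamma$. Thus the plan is to argue that none of the operations in our construction destroys simple connectivity. First I would observe that $X^{\ast}_\Gamma$ is simply connected and that the barycentric-type subdivision of each polygonal $2$--cell into triangles yields a homeomorphic complex $\Xb_\Gamma$; hence $\Xb_\Gamma$ is simply connected.

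Next, passing from $\Xb_\Gamma$ to $X'_\Gamma$ only adds edges between interior vertices lying in a common block; call such an added edge $e$ with endpoints $c$ and $c'$. By the construction in Section~\ref{subsec:subividing and adding new edges} (the zigzag patterns of Figures~\ref{fig:2} and~\ref{fig:1}), each such new edge $e$ is a diagonal of a $4$--cycle or $5$--cycle in $\Xb_\Gamma$ whose remaining edges already lie in $\Xb_\Gamma$; in particular $c$ and $c'$ are already connected by an edge path of length $\leqslant 4$ in $\Xb_\Gamma^{(1)}$. Adding $e$ therefore introduces a new generating loop (the edge $e$ together with that path), but that loop bounds a disc once we later fill in the flag completion — more simply, since $e$ together with the existing path forms a short cycle $\gamma_e$, and one checks directly from the figures that $\gamma_e$ is null-homotopic in $X_\Gamma$ (it bounds the union of the triangles of the subdivided cells together with the new triangles created in the flag completion, e.g.\ the strings of $3$--simplices described in the Example). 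Formally: by van Kampen, $\pi_1(X'_\Gamma)$ is the quotient of $\pi_1(\Xb_\Gamma)$ by the normal closure of the classes $[\gamma_e]$; since $\pi_1(\Xb_\Gamma)$ is already trivial, $\pi_1(X'_\Gamma)$ is trivial, and then $\pi_1(X_\Gamma)=\pi_1(X'_\Gamma)=1$ because the flag completion only attaches simplices of dimension $\geqslant 2$ along their boundaries.

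The cleanest way to package this is: $X_\Gamma^{(2)}$ contains $\Xb_\Gamma^{(2)}$, which already carries a simply connected space (indeed $\Xb_\Gamma$ itself, being a subdivision of the simply connected $X^{\ast}_\Gamma$); adding $2$--cells (here triangles spanned by new edges together with old ones, plus the higher simplices of the flag completion) to a simply connected complex keeps it simply connected, and adding the new $1$--cells $e$ is harmless because each $e$ is, by construction, a chord of a cycle that already bounds in $\Xb_\Gamma$, so attaching $e$ is the same as attaching a $1$--cell whose endpoints are already in the same path-component together with a $2$--cell killing the resulting new loop. I would therefore phrase the proof as: ``$\Xb_\Gamma$ is a subdivision of $X^{\ast}_\Gamma$, the universal cover of $P_\Gamma$, hence simply connected; $X_\Gamma$ is obtained from $\Xb_\Gamma$ by adding, for finitely many orbits of pairs of cells, edges that are diagonals of $4$-- and $5$--cycles already present in $\Xb_\Gamma$, together with higher-dimensional simplices; each such diagonal together with part of the boundary of the corresponding cycle bounds a disc in $X_\Gamma$, so $\pi_1$ is unchanged.''

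\textbf{Main obstacle.} The only real point requiring care is making precise that each newly added edge $e$ closes up to a null-homotopic loop in $X_\Gamma$ — i.e.\ that the new $1$--cells and $2$--cells are correctly matched up. This is essentially a bookkeeping matter about the zigzag pattern: one must verify, using Lemma~\ref{lem:technical lemma} and the explicit descriptions in Figures~\ref{fig:2}--\ref{fig:1}, that whenever we add an edge $\overline{c_j\, u_i^{-1}c_{j+i}}$ (or the analogous $d$--edge), the triangle or quadrilateral it cuts off has its other sides either in $\Xb_\Gamma$ or among previously added edges that have themselves already been ``filled'', so that proceeding along the string of $3$--simplices from $c_{n-6}u_1c_1l$ to $c_{n-2}d_6c_5u_5$ (as in the Example) exhibits an explicit null-homotopy. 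An alternative, and perhaps shorter, route avoiding this bookkeeping altogether is simply to note that $X_\Gamma$ and $\Xb_\Gamma$ have the same set of vertices and that $\Xb_\Gamma^{(1)} \subset X_\Gamma^{(1)}$ with $\Xb_\Gamma$ simply connected; then use that adding edges to the $1$--skeleton of a simply connected flag complex, followed by taking the flag completion, can only add relations to, never enlarge, the fundamental group — so $\pi_1(X_\Gamma)$ is a quotient of $\pi_1(\Xb_\Gamma)=1$, hence trivial. I would use this second formulation, as it sidesteps the diagram-chasing entirely.
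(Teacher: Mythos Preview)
Your overall strategy --- observe that $\Xb_\Gamma$ is simply connected (as a subdivision of the universal cover $X^{\ast}_\Gamma$), and then argue that the passage to $X_\Gamma$ does not create fundamental group --- is exactly the paper's strategy. The difference is in how the second step is justified, and here your preferred ``second formulation'' has a genuine gap.

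The claim that ``adding edges to the $1$--skeleton of a simply connected flag complex, followed by taking the flag completion, can only add relations to, never enlarge, the fundamental group'' is false. Attaching a $1$--cell between two vertices in the same path component \emph{adds a free generator} to $\pi_1$; whether the subsequent flag completion kills it depends entirely on which triangles appear. For a counterexample, take the path $v_0 - v_1 - \cdots - v_6$ (simply connected, flag), add the single edge $\overline{v_0v_6}$, and take the flag completion: no three vertices are pairwise adjacent, so no triangles are added, and the result is a hexagon with $\pi_1 \cong \mathbb{Z}$. So this shortcut does not work in general, and one really must check that each new edge bounds something in $X_\Gamma$. (Relatedly, your van Kampen sentence in the first formulation is misstated: $\pi_1(X'_\Gamma)$ is $\pi_1(\Xb_\Gamma)$ free-producted with a free group on the new edges, not a quotient of $\pi_1(\Xb_\Gamma)$; the quotienting only happens when the flag $2$--cells are attached.)

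The paper avoids the bookkeeping you anticipated in your ``first approach'' with a single clean observation, namely Lemma~\ref{lem:technical lemma0}(2): whenever a new edge joins interior vertices $c_k\in\Pi$ and $u_i^{-1}c_{k'}\in u_i^{-1}\Pi$, there is a \emph{real} vertex in $\Pi\cap\partial(u_i^{-1}\Pi)$ adjacent (in $\Xb_\Gamma$) to both endpoints. Thus every new edge is the base of a triangle whose other two sides already lie in $\Xb_\Gamma$; by flagness this triangle is filled in $X_\Gamma$, so the new edge is homotopic rel endpoints to a length--$2$ path in $\Xb_\Gamma$. Now any edge loop in $X_\Gamma$ is homotoped, new edge by new edge, into $\Xb_\Gamma$, where it dies. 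This is precisely your first approach, but with the $4$-- and $5$--cycles replaced by triangles, which makes the inductive/sequential filling unnecessary.
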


\begin{proof}
Let $f$ be an edge of $X_{\Gamma}$ not in $\Xb_\Gamma$. Since $\partial f$ is inside one block, we assume without loss of generality that $f$ connects an interior point of $\Pi$ and an interior point of $\ui{i}\Pi$. Lemma \ref{lem:technical lemma0} (2) implies that $f$ and a vertex in $\Pi\cap\ui{i}\Pi$ span a triangle. By flagness of $X_{\Gamma}$, $f$ is homotopic rel its end points to the concatenation of other two sides of this triangle, which is inside $\Xb_\Gamma$. 

Now we show that each loop in $X_{\Gamma}$ is null-homotopic. It suffices to consider the case where this loop is a concatenation of edges of $X_{\Gamma}$. If some edges of this loop are not in $\Xb_\Gamma$, then we can find homotopies from these edges rel their end points to paths in $\Xb_\Gamma$ by the previous discussion. Thus this loop is homotopic to a loop in $\Xb_{\Gamma}$, which must be null-homotopic since $\Xb_\Gamma$ is simply connected. 
\end{proof}

\begin{lemma}
	\label{lem:link six-large}
The link of each vertex in the $1$-skeleton $X^{(1)}_{\Gamma}$ is a $6$-large graph.
\end{lemma}

\begin{proof}
Let $x\in X^{(1)}_{\Gamma}$ be a vertex. If $x$ is an interior vertex, then there is a unique block $B\subset X_{\Gamma}$ containing this vertex, and any other vertex in $X^{(1)}_{\Gamma}$ adjacent to $x$ is contained in this block. Since $B$ is a full subcomplex of $X_{\Gamma}$, we have lk$(x,X^{(1)}_{\Gamma})=$lk$(x,B^{(1)})$. The latter link is $6$-large by Lemma~\ref{lem:iso} and Proposition~\ref{prop:interior vertex link is six-large}.

Let $x$ be a real vertex and let $\omega$ be a simple $4$-cycle or $5$-cycle in lk$(x,X^{(1)}_\Gamma)$. We need to show that $\omega$ has a diagonal. Define a vertex $v\in \omega$ to be \emph{special} if the edge $\overline{xv}$ is inside $\Xa_\Gamma$. Note that special vertices are real, but the converse may not be true (in a small block every vertex is real, yet there are edges not in $\Xa_\Gamma$). 

First we consider the case when the number of special vertices in $\omega$ is $\leqslant 1$. We claim $\omega$ is contained in one block $B$. If the contrary holds, then $\omega$ contains at least two vertices which are in the intersection of two different blocks that contain $x$. However, these two vertices have to be special as the vertex set of the intersection of two different blocks containing $x$ is determined by Corollary~\ref{cor:intersection of blocks}. This yields a contradiction. Note that lk$(x,B)$ is $6$-large by Proposition~\ref{prop:real vertex link is six-large}. Thus $\omega$ has a diagonal. 

Now we assume that $\omega$ has $\geqslant 2$ special vertices. Let $\{v_i\}_{i\in \mathbb Z/n\mathbb Z}$ be consecutive special vertices on $\omega$ (then $n$ is the number of special vertices on $\omega$). By the argument in the previous paragraph, the segment $\overline{v_iv_{i+1}}$ of $\omega$ is contained in one block, which we denote by $B_i$. Then $\overline{v_iv_{i+1}}$ is an edge-path in lk$(x,B_i)$ traveling between two different special vertices (since $n\geqslant 2$). Thus if $B_i$ is large, then $\overline{v_iv_{i+1}}$ has length $\geqslant 2$ by Lemma~\ref{lem:distance}. Therefore the number of large blocks among $\{B_i\}_{i=1}^{n}$ is $\leqslant 2$.

Each $v_i$ arises from an edge between $x$ and $v_i$. This edge is inside $\Xa_\Gamma$, hence it is labeled by a generator of $A_\Gamma$, corresponding to a vertex $z_i\in \Gamma$. Since $v_i$ corresponds to either an incoming, or an outgoing edge labeled by $z_i$, we will also write $v_i=z^+_i$ or $v_i=z^-_i$. Let $e_i$ be the defining edge of $B_i$. Then 
\begin{equation}
\label{eq:intersection}
z_{i+1}\in e_i\cap e_{i+1}.
\end{equation} 
Moreover, note that lk$(x,B_i)$ is a circle when $B_i$ is a small block. Thus 
\begin{equation}
\label{eq:intersection1}
z_i\neq z_{i+1}\ \mathrm{when}\ B_i\ \mathrm{is\ a\ small\ block.}
\end{equation} 
However, it is possible that $z_i=z_{i+1}$ when $B_i$ is large.
\medskip

\noindent
\emph{Case 1:} There are two large blocks. Note that there is at most one small block, so the two large blocks must be consecutive. We assume without loss of generality that $B_0$ and $B_1$ are large. We claim $B_0=B_1$, and there are no other blocks. Then $\omega$ is inside one block and by Proposition~\ref{prop:real vertex link is six-large}, it has a diagonal. Now we prove the claim. 

First we show there are no other blocks. By contradiction we assume there is a small block $B_2$. If $B_0\neq B_1$, then $e_0$, $e_1$ and $e_2$ are pairwise distinct. We deduce from \eqref{eq:intersection} that $z_1,z_2$ and $z_0$ form a triangle in $\Gamma$ which contains an edge labeled by $2$, contradiction. 
If $B_0=B_1$ then \eqref{eq:intersection1} 
implies that $z_0\neq z_2$, and hence either $z_2 = z_1$ or
$z_0 = z_1$. Suppose that $z_2=z_1$. Then, by Lemma~\ref{lem:distance} we have that the lengths
of the paths $\overline{v_1v_2}$,  $\overline{v_2v_0}$, and $\overline{v_0v_1}$ are at least, respectively,
$3,1$, and $2$. This implies that $\omega$ has length $\geqslant 6$, which is a contradiction. Similarly, we get
a contradiction for $z_0 = z_1$.

%
%

Now we show $B_0=B_1$. If $B_0\neq B_1$, since there are no other blocks, we must have $z_0=z_1$ by \eqref{eq:intersection}. Now we argue as before to show that the length of $\omega$ is $\geqslant 6$, which is a contradiction.
\medskip

\noindent
\emph{Case 2:} There is only one large block. We denote this block by $B_0$ and claim $n=1$. If there are other small blocks, then $n\leqslant 4$. 

We first show that $n=4$ is impossible. We argue by contradiction. Let $B_1,B_2$ and $B_3$ be small blocks. If these small blocks are pairwise distinct, then \eqref{eq:intersection} and \eqref{eq:intersection1} imply that 
either $z_0=z_1$ and we have a triangle in $\Gamma$ with all labels $2$ or
$z_0,z_1,z_2$ and $z_3$ are consecutive vertices in a $4$-cycle of $\Gamma$ with three edges labeled by $2$. In both cases we get a contradiction. If $B_1=B_2$, then the by observation \eqref{eq:intersection1} above, the concatenation of $\overline{v_1v_2}$ and $\overline{v_2v_3}$ has length $=3$. As $\overline{v_3v_0}$ has length $\geqslant 1$ and $\overline{v_0v_1}$ has length $\geqslant 2$, we know that $\omega$ has length $\geqslant 6$, which yields a contradiction. Then case $B_3=B_2$ can be ruled out similarly. 
If $B_1 = B_3$ and $z_0\neq z_1$ then, by \eqref{eq:intersection1}, $z_3=z_1$ and hence $z_0,z_1$ are in a small block. This contradicts
the fact that $z_0,z_1\in B_0$. Hence, $z_0 = z_1$.
Thus the segment $\overline{v_0v_1}$ is an edge path in $B_0$ from $z^{+}_0$ to $z^{-}_0$, which has length $\geqslant 3$ by Lemma~\ref{lem:distance}. On the other hand, the concatenation of $\overline{v_1v_2}$, $\overline{v_2v_3}$ and $\overline{v_3v_0}$ has length $\geqslant 3$, hence $\omega$ has length $\geqslant 6$, which is a contradiction. 

Now we consider $n=3$. Let $B_1,B_2$ be small blocks. If $B_1\neq B_2$, then \eqref{eq:intersection} and \eqref{eq:intersection1} imply that $z_0,z_1$ and $z_2$ form three vertices of a $3$-cycle in $\Gamma$ with two edges labeled by $2$, which is a contradiction. If $B_1=B_2$, then \eqref{eq:intersection1} implies that $z_0=z_1$. Thus the segment $\overline{v_0v_1}$ is an edge path in $B_0$ from $z^{+}_0$ to $z^{-}_0$, which has length $\geqslant 3$ by Lemma~\ref{lem:distance}. On the other hand, \eqref{eq:intersection1} implies the concatenation of $\overline{v_1v_2}$ and $\overline{v_2v_0}$ has length $\geqslant 3$. Thus $\omega$ has length $\geqslant 6$, which is a contradiction.

It follows from \eqref{eq:intersection1} that the case $n=2$ is impossible.
\medskip

\noindent
\emph{Case 3}: there are no large blocks. Then $n\leqslant 5$. Since each $\overline{v_iv_{i+1}}$ has length $\leqslant 2$, we have $n\geqslant 3$. If $n=3$, then \eqref{eq:intersection} and \eqref{eq:intersection1} imply that $z_0,z_1$ and $z_2$ form three vertices of a $3$-cycle in $\Gamma$ with all edges labeled by $2$, which is a contradiction. 

Now suppose $n=4$. If all blocks are pairwise distinct, then by \eqref{eq:intersection} and \eqref{eq:intersection1}, we have a $4$-cycle in $\Gamma$ with all edges labeled by $2$, which is a contradiction. If two consecutive blocks, say $B_0$ and $B_1$, are the same, then \eqref{eq:intersection1} implies that concatenation of $\overline{v_0v_1}$ and $\overline{v_1v_2}$ has length $=3$ and $z_0=z_2$. 
It follows that $B_2=B_3$ and the length of $\omega$ is $6$, a contradiction.
%
If two non-consecutive blocks, say $B_0$ and $B_2$, are the same then, by \eqref{eq:intersection1}, 
we have $z_0=z_2$, hence $B_0=B_1$, which we ruled out above. 

It remains to consider $n=5$. If two consecutive blocks are the same, then	 by the argument in the previous paragraph, we know $\omega$ has length $\geqslant 6$, which is impossible. If two non-consecutive blocks are the same, then we can deduce a contradiction as before. Thus the blocks are pairwise distinct. Then \eqref{eq:intersection} implies that the defining edges of these blocks form a $5$-cycle in $\Gamma$. Since the length of $\omega$ is $\leqslant 5$, $v_i$ and $v_{i+1}$ are adjacent for all $i$. Suppose without loss of generality that $v_0=z^{+}_0$, then we must have $v_1=z^{-}_1$ by Lemma~\ref{lem:distance} (2). Similarly, $v_2=z^+_2$, $v_3=z^-_3$, $v_4=z^+_4$ and $v_0=z^-_0$, which is a contradiction.
\end{proof}

The following is a direct consequence of Lemma~\ref{lem:sc} and Lemma~\ref{lem:link six-large}.

\begin{theorem}
	\label{thm:main}
The complex $X_{\Gamma}$ is systolic. Hence if $A_{\Gamma}$ is an Artin group of almost large type, then it acts properly and cocompactly by automorphisms on a systolic complex $X_{\Gamma}$.
\end{theorem}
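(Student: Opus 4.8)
The plan is to check, clause by clause, that $X_\Gamma$ satisfies Definition~\ref{d:systolic}, and then to invoke the equivariance and geometry of the action already set up in Section~\ref{s:general}.

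First, $X_\Gamma$ is flag essentially by construction: it was defined as the flag completion of $X'_\Gamma$, so every finite set of pairwise-adjacent vertices spans a simplex. Second, $X_\Gamma$ is connected and simply connected; the latter is exactly Lemma~\ref{lem:sc}, whose argument first homotopes any edge of $X_\Gamma$ not lying in $\Xb_\Gamma$ into $\Xb_\Gamma$ using flagness together with Lemma~\ref{lem:technical lemma0}(2), and then uses that $\Xb_\Gamma$---a subdivision of the universal cover of a presentation complex---is simply connected; connectedness follows \emph{a fortiori}. Third, the link of every vertex of $X_\Gamma^{(1)}$ is a $6$-large graph, which is precisely Lemma~\ref{lem:link 6-large}. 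Taken together these are the defining conditions of Definition~\ref{d:systolic}, so $X_\Gamma$ is systolic.

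For the group-theoretic statement, I would recall from Section~\ref{s:general} that the deck-transformation action $A_\Gamma \curvearrowright \Xb_\Gamma$ extends to a simplicial action $A_\Gamma \curvearrowright X_\Gamma$---the new edges having been added $A_\Gamma$-equivariantly, since within each block they follow the recipe of Section~\ref{subsec:subividing and adding new edges} and every element of $A_\Gamma$ carries a block to a block with the same defining edge---and that this action is proper and cocompact. A group acting properly and cocompactly by simplicial automorphisms on a systolic complex is by definition systolic, so $A_\Gamma$ is a systolic group.

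As for the main obstacle: there is effectively none left at this stage---the theorem is pure assembly. The real difficulty was concentrated in Lemma~\ref{lem:link 6-large}, specifically the case analysis (Cases~1--3) bounding how many large versus small blocks may meet a real vertex along a short full cycle, which is exactly where the almost-large-type hypothesis (no triangle carrying a $2$-labeled edge, no square carrying three $2$-labeled edges) is used; and, upstream, in Propositions~\ref{prop:real vertex link is 6-large} and~\ref{prop:interior vertex link is 6-large}, i.e.\ the identification of the two kinds of vertex links with model graphs (Definition~\ref{def:model graph}) and the $6$-largeness of model graphs (Lemma~\ref{lem:model graph is 6-large}). Once those are in place, Theorem~\ref{thm:main} is immediate.
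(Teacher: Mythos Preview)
Your proposal is correct and matches the paper's own treatment exactly: the paper states that Theorem~\ref{thm:main} is a direct consequence of Lemma~\ref{lem:sc} and Lemma~\ref{lem:link 6-large}, and you have simply unpacked that by checking the three clauses of Definition~\ref{d:systolic} (flagness by construction, simple connectivity via Lemma~\ref{lem:sc}, and $6$--large links via Lemma~\ref{lem:link 6-large}) and then recalling the proper cocompact action set up earlier in Section~\ref{s:general}. Your concluding remarks about where the substantive work lies are accurate and appropriate.
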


\bibliography{mybib}{}
\bibliographystyle{plain}
\end{document}